\newtheorem{theorem}{Theorem}
\newtheorem{corollary}[theorem]{Corollary}
\newtheorem{lemma}[theorem]{Lemma}
\newtheorem{proposition}[theorem]{Proposition}
\newtheorem{proposition*}{Proposition}
\newtheorem{lemma*}{Lemma}
\theoremstyle{remark}
\newtheorem{remark}[theorem]{\bf Remark}
\newtheorem{definition}[theorem]{\bf Definition}
\numberwithin{theorem}{section}
\numberwithin{question}{section}
\numberwithin{figure}{section}
\numberwithin{equation}{section}
\begin{document}

\title{Decomposition of global 2-SLE for $\kappa\in (4,8)$\\ and an application to critical FK-Ising model}
\bigskip{}
\author[1]{Yu Feng\thanks{yufeng\_proba@163.com}}
\author[2]{Mingchang Liu\thanks{liumc\_prob@163.com}}
\author[1]{Hao Wu\thanks{hao.wu.proba@gmail.com.}}
\affil[1]{Tsinghua University, Beijing, China}
\affil[2]{Capital Normal University, Beijing, China}
\date{}
% Use \authorrunning{Short Title} for an abbreviated version of
% your contribution title if the original one is too long%

%

% Use the package "url.sty" to avoid
% problems with special characters
% used in your e-mail or web address
%

\global\long\def\CR{\mathrm{CR}}
\global\long\def\ST{\mathrm{ST}}
\global\long\def\SF{\mathrm{SF}}
\global\long\def\cov{\mathrm{cov}}
\global\long\def\dist{\mathrm{dist}}
\global\long\def\SLE{\mathrm{SLE}}
\global\long\def\hSLE{\mathrm{hSLE}}
\global\long\def\CLE{\mathrm{CLE}}
\global\long\def\GFF{\mathrm{GFF}}
\global\long\def\inte{\mathrm{int}}
\global\long\def\ext{\mathrm{ext}}
\global\long\def\inrad{\mathrm{inrad}}
\global\long\def\outrad{\mathrm{outrad}}
\global\long\def\dimH{\mathrm{dim}}
\global\long\def\capa{\mathrm{cap}}
\global\long\def\diam{\mathrm{diam}}
\global\long\def\free{\mathrm{free}}
\global\long\def\hF{{}_2\mathrm{F}_1}
\global\long\def\ghF{{}_3\mathrm{F}_2}
\global\long\def\simple{\mathrm{simple}}
\global\long\def\even{\mathrm{even}}
\global\long\def\odd{\mathrm{odd}}
\global\long\def\st{\mathrm{ST}}
%\global\long\def\ust{\mathrm{UST}}
\global\long\def\usf{\mathrm{USF}}
\global\long\def\Leb{\mathrm{Leb}}
\global\long\def\LP{\mathrm{LP}}
\global\long\def\coulomb{\LH}
\global\long\def\coulombnew{\LG}
\global\long\def\kfunc{p}
\global\long\def\OO{\mathcal{O}}
\global\long\def\Dist{\mathrm{Dist}}
\global\long\def\ball{D}

\global\long\def\eps{\epsilon}
\global\long\def\ov{\overline}
\global\long\def\U{\mathbb{U}}
\global\long\def\T{\mathbb{T}}
\global\long\def\HH{\mathbb{H}}
\global\long\def\LA{\mathcal{A}}
\global\long\def\LB{\mathcal{B}}
\global\long\def\LC{\mathcal{C}}
\global\long\def\LD{\mathcal{D}}
\global\long\def\LF{\mathcal{F}}
\global\long\def\LK{\mathcal{K}}
\global\long\def\LE{\mathcal{E}}
\global\long\def\LG{\mathcal{G}}
\global\long\def\LI{\mathcal{I}}
\global\long\def\LJ{\mathcal{J}}
\global\long\def\LL{\mathcal{L}}
\global\long\def\LM{\mathcal{M}}
\global\long\def\LN{\mathcal{N}}
\global\long\def\LQ{\mathcal{Q}}
\global\long\def\LR{\mathcal{R}}
\global\long\def\LT{\mathcal{T}}
\global\long\def\LS{\mathcal{S}}
\global\long\def\LU{\mathcal{U}}
\global\long\def\LV{\mathcal{V}}
\global\long\def\LW{\mathcal{W}}
\global\long\def\LX{\mathcal{X}}
\global\long\def\LY{\mathcal{Y}}
\global\long\def\PartF{\mathcal{Z}}
\global\long\def\LH{\mathcal{H}}
\global\long\def\LJ{\mathcal{J}}
\global\long\def\R{\mathbb{R}}
\global\long\def\C{\mathbb{C}}
\global\long\def\N{\mathbb{N}}
\global\long\def\Z{\mathbb{Z}}
\global\long\def\E{\mathbb{E}}
\global\long\def\PP{\mathbb{P}}
\global\long\def\QQ{\mathbb{Q}}
\global\long\def\A{\mathbb{A}}
\global\long\def\one{\mathbb{1}}
\global\long\def\bn{\mathbf{n}}
\global\long\def\MR{MR}
\global\long\def\cond{\,|\,}
\global\long\def\la{\langle}
\global\long\def\ra{\rangle}
\global\long\def\tree{\Upsilon}
\global\long\def\prob{\mathbb{P}}
\global\long\def\hm{\mathrm{Hm}}
\global\long\def\cross{\mathrm{Cross}}

\global\long\def\sf{\mathrm{SF}}
\global\long\def\wr{\varrho}

%Eve: I like these notations better:
\global\long\def\Im{\operatorname{Im}}
\global\long\def\Re{\operatorname{Re}}
%\renewcommand{\Im}{\mathrm{Im}}
%\renewcommand{\Re}{\mathrm{Re}}

%Eves macros
\global\long\def\ud{\mathrm{d}}
\global\long\def\pder#1{\frac{\partial}{\partial#1}}
\global\long\def\pdder#1{\frac{\partial^{2}}{\partial#1^{2}}}
\global\long\def\der#1{\frac{\ud}{\ud#1}}

\global\long\def\bZnn{\mathbb{Z}_{\geq 0}}

\global\long\def\Vfunc{\LG}%{\LH}%{\LV}
\global\long\def\gfunc{g^{(\rr)}}
\global\long\def\hfunc{h^{(\rr)}}

\global\long\def\SimplexInt{\rho}
\global\long\def\CubeInt{\widetilde{\rho}}

\global\long\def\ii{\mathfrak{i}}
\global\long\def\rr{\mathfrak{r}}
\global\long\def\chamber{\mathfrak{X}}
\global\long\def\Wchamber{\mathfrak{W}}

\global\long\def\SimplexIntKappa8{\SimplexInt}

\global\long\def\nested{\boldsymbol{\underline{\Cap}}}
\global\long\def\unnested{\boldsymbol{\underline{\cap\cap}}}
%\global\long\def\unnested{\boldsymbol{\underline{\cap\scaleobj{0.85}{\,\cdots}\cap}}}
%\global\long\def\removeLink{/}
\global\long\def\unnested{\boldsymbol{\underline{\cap\cap}}}

\global\long\def\acycle{\vartheta}
\global\long\def\bcycle{\tilde{\acycle}}
\global\long\def\Gloop{\Theta}

\global\long\def\metric{\mathrm{dist}}

\global\long\def\adj#1{\mathrm{adj}(#1)}

\global\long\def\bs{\boldsymbol}

\global\long\def\edge#1#2{\langle #1,#2 \rangle}
\global\long\def\graph{G}

\newcommand{\conn}{\vartheta}
\newcommand{\hatconn}{\widehat{\vartheta}_{\mathrm{RCM}}}%{\LA}
\newcommand{\realpt}{\smash{\mathring{x}}}
\newcommand{\corrind}{\LC}
\newcommand{\bssymb}{\pi}%{\Pi}%{\zeta}
\newcommand{\PRCM}{\mu}%{\QQ}
\newcommand{\coeff}{p}
\newcommand{\MainConst}{C}

\global\long\def\removeLink{/}
\maketitle

\begin{center}
\begin{minipage}{0.95\textwidth}
\abstract{
We consider global 2-SLE$_{\kappa}$ $(\eta_1, \eta_2)$ in a topological rectangle with $\kappa\in (4,8)$. 
We derive the law of a random hitting point of the curves and show that, conditional on this random hitting point, the two curves in the pair become $\SLE_{\kappa}(2,2,-4)$ and $\SLE_{\kappa}(2, -4)$ processes. Using a similar idea, we derive the asymptotics of the probability for $\eta_1\cap\eta_2=\emptyset$. As an application, we derive the asymptotics of the probability for the existence of two disjoint open paths in critical FK-Ising model. }

\bigskip{}

\noindent\textbf{Keywords:} Schramm-Loewner evolution, %Gaussian free field flow lines, 
random-cluster model, crossing probability\\ 

\noindent\textbf{MSC:} 60J67
\end{minipage}
\end{center}

\tableofcontents
\newpage

\section{Introduction}
In 1999, Schramm introduced Schramm-Loewner evolution (SLE) as a candidate for the scaling limit of interfaces in planar critical statistical models. Since then, there are several models whose interfaces are proved to converge to the SLE processes: loop-erased random walk and uniform spanning tree~\cite{LawlerSchrammWernerLERWUST}, percolation~\cite{SmirnovPercolationConformalInvariance}, level lines of Gaussian free field (GFF)~\cite{SchrammSheffieldDiscreteGFF}, Ising and FK-Ising model~\cite{ChelkakSmirnovIsing, CDCHKSConvergenceIsingSLE}. In these cases, one considers the models in a simply connected domain with two marked points on the boundary. 
Such a domain is called a Dobrushin domain.
One proves that the interface connecting the two marked points in the Dobrushin domain converges to SLE processes. 
Now, we consider the models in a topological rectangle where there are four marked boundary points and the boundary condition of the model is alternating. In such a case, there is a pair of interfaces connecting among the four marked points. It turns out that the pair of two interfaces will converge to a pair of SLE curves, which we call global 2-$\SLE$ (see the precise definition in Definition~\ref{def::2SLE}).
Global 2-SLE has been studied in previous works  (see, e.g.,~\cite{MillerSheffieldWernerNonsimpleSLE, WuHyperSLE, ZhanGreen2SLE}). 
In this article, we focus on global 2-SLE with $\kappa\in (4,8)$. 
The marginal law of one curve in global 2-SLE$_{\kappa}$ is a variant of SLE process whose driving function involves hypergeometric function. 
In Theorem~\ref{thm::2SLE_decomposition}, we show that global 2-$\SLE_{\kappa}$ with $\kappa\in (4,8)$ can be decomposed as a random point and a pair of $\SLE_\kappa(\rho)$ processes with multiple force points. Using a similar idea for the proof of the decomposition, we are able to derive the asymptotics of the probability that the two curves in global 2-$\SLE_{\kappa}$ avoid each other, see Proposition~\ref{prop::global2_mono}. Such asymptotics can be applied to critical FK-Ising model and we derive the asymptotics of probability for the existence of two disjoint open paths in FK-Ising model, see Proposition~\ref{prop::FKIsing_mono}. 

\subsection{Global 2-SLE}
\label{subsec::intro_NSLE}

We first introduce polygons and the space of curves, and then give a formal definition of global 2-SLE. 
Fix $n\ge 1$, we call $(\Omega; x_1, \ldots, x_{n})$ a (topological) polygon if $\Omega\subset\C$ is a simply connected domain such that $\partial\Omega$ is locally connected and $x_1, \ldots, x_{n}$  are $n$ distinct points  lying along the boundary $\partial\Omega$ in counterclockwise order.
When $n=2$, we call such polygon a Dobrushin domain. For a Dobrushin domain $(\Omega; x_1, x_2)$, we denote by $(x_1x_2)$ the boundary arc from $x_1$ to $x_2$  in counterclockwise order which does not contain $\{x_1,x_2\}$. We will denote by $[x_1x_2]:=\{x_1,x_2\}\cup (x_1 x_2)$. 
When $n=4$, we call such a polygon a quad. 

For two continuous mappings $\eta,\tilde{\eta}$ from $[0,1]$ to $\mathbb{C}$, we define 
\begin{equation}\label{eqn::curve_metric}
	\dist(\eta, \tilde{\eta}):=\inf_{\varphi,\tilde{\varphi}}\sup_{t\in [0,1]}|\eta(\varphi(t))-\tilde{\eta}(\tilde{\varphi}(t))|,
\end{equation}
where the infimum is taken over all increasing homeomorphisms $\varphi, \tilde{\varphi}: [0,1]\to [0,1]$.
Define an equivalent relation on continuous mappings from $[0,1]$ to $\mathbb{C}$: $\eta\sim \tilde{\eta}$ if $\dist(\eta,\tilde{\eta})=0$. We denote by $X$ the set of planar oriented curves, i.e. continuous mappings from $[0,1]$ to $\C$ modulo the equivalent relation $\sim$. We equip $X$ with the metric~\eqref{eqn::curve_metric}.
 Then the metric space $(X, \dist)$ is complete and separable. 
We denote by $X_{\simple}(\Omega; x_1, x_2)$ the set of continuous simple unparameterized curves in $\Omega$ connecting $x_1$ and $x_2$ such that they only touch the boundary $\partial\Omega$ in $\{x_1, x_2\}$. We denote by $X_0(\Omega; x_1, x_2)$ the closure of $X_{\simple}(\Omega; x_1, x_2)$ in the metric topology $(X, \dist)$. Note that the curves in $X_0(\Omega; x_1, x_2)$ may have self-intersections. In particular, the chordal $\SLE_{\kappa}$ curves belongs to this space almost surely when $\kappa>4$. 

Fix a quad $(\Omega; x_1, x_2, x_3, x_4)$, we consider a pair of two curves $(\eta_1, \eta_2)$ in $\Omega$ such that each curve connects two points among $\{x_1, x_2, x_3, x_4\}$. These two curves can have two planar connectivities, but they are the same up to rotation. For definiteness, we focus on the pair of curves $(\eta_1, \eta_2)$ such that $\eta_1\in X_0(\Omega; x_1, x_2)$ and $\eta_2\in X_0(\Omega; x_3, x_4)$ such that $\eta_1\cap [x_3x_4]=\emptyset$ and $\eta_2\cap[x_1x_2]=\emptyset$ and that $\eta_1$ and $\eta_2$ do not cross. 
%{\color{red}Hao: this sentence is very weird to be here: \color{blue}(Throughout this paper, we will denote by $(x,y)$ {\color{red}typo} (resp. $[x,y]$ {\color{red}typo}) the open (resp. closed) arc between $x$ and $y$.)} 
We denote the space of such pairs of two curves by 
$X_0(\Omega; x_1, x_2, x_3, x_4)$.

\begin{definition}\label{def::2SLE}
Fix $\kappa\in (0,8)$ and a quad $(\Omega; x_1, x_2, x_3, x_4)$. 
We call a probability measure on pairs $(\eta_1, \eta_2)\in X_0(\Omega; x_1, x_2, x_3, x_4)$ a global $2$-$\SLE_{\kappa}$ if the conditional law of $\eta_1$ given $\eta_2$ is $\SLE_{\kappa}$ connecting $x_1$ and $x_2$ in the connected component of $\Omega\setminus\eta_2$ having $x_1, x_2$ on its boundary; 
and the conditional law of $\eta_2$ given $\eta_1$ is $\SLE_{\kappa}$ connecting $x_3$ and $x_4$ in the connected component of $\Omega\setminus\eta_1$ having $x_3, x_4$ on its boundary.
\end{definition}
The existence and uniqueness of global $2$-$\SLE_{\kappa}$ with $\kappa\in (0,8)$ are known, see~\cite[Theorem~A.1]{MillerSheffieldWernerNonsimpleSLE} and~\cite[Proposition~6.10]{WuHyperSLE}. The notion of global 2-SLE can be generalized to global multiple SLEs, see related conclusions in~\cite{KozdronLawlerMultipleSLEs, PeltolaWuGlobalMultipleSLEs, BeffaraPeltolaWuUniqueness, ZhanExistenceUniquenessMultipleSLE, AngHoldenSunYu2023, FengLiuPeltolaWu2024,ambrosio2025multiplesle}.  {In~\cite{FengLiuPeltolaWu2024}, the authors generalise the constructions in this paper and give the construction of pure partition functions for $\SLE_\kappa$ and global $N$-$\SLE_{\kappa}$ when $\kappa\in (4,8)$ and $N\ge 2$.}

\subsection{Decomposition of global 2-SLE for $\kappa\in (4,8)$}
Our first conclusion is a decomposition of global $2$-$\SLE_{\kappa}$: it can be decomposed into a random hitting point and a pair of $\SLE_\kappa(\rho)$ processes with multiple force points {(see Section~\ref{sec::pre})}.

\begin{theorem}\label{thm::2SLE_decomposition}
Fix $\kappa\in (4,8)$ and a quad $(\Omega; x_1, x_2, x_3, x_4)$ and suppose $(\eta_1, \eta_2)\in X_0(\Omega; x_1, x_2, x_3, x_4)$ is a global $2$-$\SLE_{\kappa}$ such that $\eta_1$ goes from $x_2$ to $x_1$ and $\eta_2$ goes from $x_3$ to $x_4$. Let $\tau_1$ (resp. $\tau_2$) be the first time that $\eta_1$ (resp. $\eta_2$) hits the boundary arc $(x_4x_1)$ and denote $U_1=\eta_1(\tau_1)$ and $U_2=\eta_2(\tau_2)$. Then we have the following characterizations. See Figure~\ref{fig::intro_decomp}. 
\begin{itemize}
\item 
We fix a conformal map $\psi$ from $\Omega$ onto $\HH$ such that $\psi(x_1)=\infty$. Then the law of $\psi(U_2)=\psi(\eta_2(\tau_2))$ is absolutely continuous with respect to the Lebesgue measure on the interval $(\psi(x_4), \infty)$ and its density is given by 
\begin{equation}\label{eqn::2SLE_hittingpoint_density}
r(u)=\frac{\prod_{j=2}^4(u-\psi(x_j))^{-4/\kappa}}{\int_{\psi(x_4)}^{\infty}\prod_{j=2}^4(v-\psi(x_j))^{-4/\kappa}\ud v}. 
\end{equation}
\item Given $U_2$, the conditional law of $\eta_1$ is $\SLE_{\kappa}(2,2,-4)$ in $\Omega$ from $x_2$ to $x_1$ with force points $(x_3, x_4, U_2)$. 
\item Given $(U_2, \eta_1)$, denote by $\Omega^R$ the connected component of $\Omega\setminus\eta_1$ having $(x_3x_4)$ on the boundary. Then the conditional law of $\eta_2[0,\tau_2]$ given $(U_2, \eta_1)$ is $\SLE_{\kappa}(2, -4)$ in $\Omega^R$ from $x_3$ to $U_1$ with force points $(x_4, U_2)$, up to the first hitting time of $(x_4x_1)$. Given $\eta_2[0,\tau_2]$, the curve $(\eta_2(t): t\ge \tau_2)$ is an independent $\SLE_{\kappa}$ from $U_2$ to $x_4$ in the connected component of $\Omega\setminus\eta_2[0,\tau_2]$ having $x_4$ on the boundary.  
\end{itemize}
\end{theorem}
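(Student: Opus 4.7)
Map conformally to $\HH$ via $\psi$, writing $y_j:=\psi(x_j)$ for $j=2,3,4$, so that $y_2<y_3<y_4$, $x_1$ is sent to $\infty$, and the arc $(x_4x_1)$ becomes $(y_4,\infty)\subset\R$. The strategy is to establish the first two bullets jointly by identifying the marginal law of $\eta_1$ as a hypergeometric $\SLE_\kappa$ and exhibiting its partition function as a Coulomb-gas integral over the hitting-point variable $u$; the third bullet then follows from Definition~\ref{def::2SLE} together with a one-force-point version of the same analysis.

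By prior work on global $2$-$\SLE$ and $\hSLE$ (e.g.\ the results cited in the introduction), the marginal law of $\eta_1$ is $\hSLE_\kappa$ from $y_2$ to $\infty$ with marked points $y_3,y_4$, driven by a hypergeometric partition function $\PartF_{\hSLE}(y_2;y_3,y_4)$. The input I would use is the Coulomb-gas integral representation
\[
\PartF_{\hSLE}(y_2;y_3,y_4)\;=\;C(y_3,y_4)\int_{y_4}^{\infty}\prod_{j=2}^{4}(u-y_j)^{-4/\kappa}\,du,
\]
with $C(y_3,y_4)$ collecting factors independent of $y_2$; the integral converges precisely for $\kappa\in(4,8)$, since at $u=y_4$ one needs $4/\kappa<1$ and at $\infty$ one needs $12/\kappa>1$. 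Viewed jointly in $(y_2,y_3,y_4,u)$, the integrand equals, up to pair factors independent of $y_2$, the partition function of $\SLE_\kappa(2,2,-4)$ with force points $(y_3,y_4,u)$; I would check this by verifying the BPZ/null-vector equation, using that $\rho=-4$ corresponds to a degenerate boundary weight. Via the standard partition-function / Girsanov correspondence for $\SLE_\kappa(\rho)$ processes, this integral identity translates into the decomposition
\[
\hSLE_\kappa \;=\; \int_{y_4}^{\infty}\! r(u)\,\SLE_\kappa(2,2,-4)_{(y_3,y_4,u)}\,du,
\]
with $r(u)$ the density in~\eqref{eqn::2SLE_hittingpoint_density}. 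A standard hitting-exponent computation shows that under $\SLE_\kappa(2,2,-4)_{(y_3,y_4,u)}$ the curve a.s.\ first hits $(y_4,\infty)$ exactly at $u$, so this representation simultaneously yields the density of $U_2$ and the conditional law of $\eta_1$ given $U_2$.

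For the third bullet, by Definition~\ref{def::2SLE} the conditional law of $\eta_2$ given $\eta_1$ is chordal $\SLE_\kappa$ from $x_3$ to $x_4$ in $\Omega^R$. Applying the same Coulomb-gas decomposition to this chordal $\SLE_\kappa$, now conditioned on the value $U_2$ of its first hit of the boundary arc $(x_4x_1)\subset\partial\Omega^R$, identifies $\eta_2[0,\tau_2]$ as $\SLE_\kappa(2,-4)$ with force points $(x_4,U_2)$ from $x_3$ to $U_2$. The behaviour past $\tau_2$ is then immediate from the domain Markov property for chordal $\SLE_\kappa$ applied at the stopping time $\tau_2$.

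\textbf{Main obstacle.} The technical heart is the hypergeometric identity representing $\PartF_{\hSLE}$ as the above Coulomb-gas integral and the verification that the integrand solves the $\SLE_\kappa(2,2,-4)$ null-vector equation. Both the exponent $-4/\kappa$ and the integration interval $(y_4,\infty)$ are pinned down by these calculations, and all convergence and ``target'' properties sharply use the restriction $\kappa\in(4,8)$.
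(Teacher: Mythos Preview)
Your Coulomb-gas strategy is essentially the one the paper uses, but there is a genuine gap at the key identification step. You assert that under $\SLE_\kappa(2,2,-4)$ from $y_2$ with force points $(y_3,y_4,u)$ the curve ``a.s.\ first hits $(y_4,\infty)$ exactly at $u$''. This is false: the cumulative right-force is $2$ on $(y_3,y_4)$, $4$ on $(y_4,u)$, and $0$ on $(u,\infty)$, so for $\kappa\in(4,8)$ the curve avoids $[y_3,u]$ entirely and first hits $(y_4,\infty)$ at a \emph{random} point $U_1\in(u,\infty)$ (this is Dubedat's lemma, cited in Definition~\ref{def::Q2}). Thus the integration variable $u$ is not a function of $\eta_1$ at all; even if it were, the hitting point of $\eta_1$ is $U_1$, whereas the theorem conditions on $U_2=\eta_2(\tau_2)$. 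Your third-bullet argument then decomposes the conditional chordal $\SLE_\kappa$ of $\eta_2$ given $\eta_1$ according to its own hitting point, but you never explain why that hitting point coincides with the $u$ from the decomposition of $\eta_1$'s marginal---a priori these are two unrelated mixtures. (You also write the target of $\eta_2[0,\tau_2]$ as $U_2$; the statement says $U_1$.)

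The paper closes this gap by reversing the logic: it \emph{defines} a measure $\QQ_2$ by first sampling $u$ with density $r$, then $\eta_1\sim\SLE_\kappa(2,2,-4)$ with force points $(y_3,y_4,u)$, then $\eta_2[0,\tau_2]\sim\SLE_\kappa(2,-4)$ in the complementary domain from $x_3$ to $U_1$ with force points $(y_4,u)$. For this last process the hitting claim \emph{is} correct (the cumulative force drops to $-2$ at $u$, the continuation threshold), so $U_2=u$ holds by construction under $\QQ_2$. The substantive work is then to show $\QQ_2$ equals global $2$-$\SLE_\kappa$: one integrates out $u$ via a uniformly integrable martingale (Lemma~\ref{lem::mart1}) to see that the $\QQ_2$-marginal of $\eta_1$ is $\SLE_\kappa$ weighted by $H(\Omega^R;x_3,x_4)^h$ (Lemma~\ref{lem::eta1underQ1}), and that the $\QQ_2$-conditional of $\eta_2$ given $\eta_1$ is chordal $\SLE_\kappa$ (Lemma~\ref{lem::eta21underQ1}); the characterization in Lemma~\ref{lem::characterisation} finishes. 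Your integral representation of $\PartF_{\hSLE}$ is effectively Lemma~\ref{lem::eta1underQ1}, but the identification $u=U_2$ genuinely requires building the two-curve coupling and checking \emph{both} conditional laws, which your sketch does not do.
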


\begin{figure}[ht!]
\begin{center}
\includegraphics[width=0.4\textwidth]{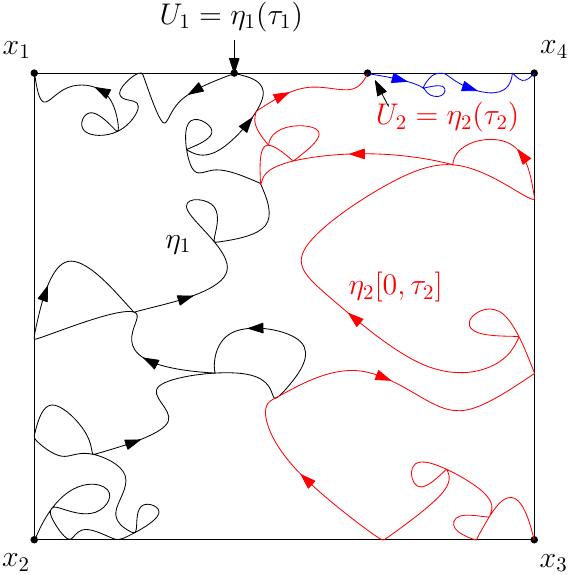}
\end{center}
\caption{\label{fig::intro_decomp}
Given $U_2$, the conditional law of $\eta_1$ is $\SLE_{\kappa}(2,2,-4)$ in $\Omega$ from $x_2$ to $x_1$ with force points $(x_3, x_4, U_2)$. 
Given $(U_2, \eta_1)$, the conditional law of $\eta_2[0,\tau_2]$ is $\SLE_{\kappa}(2,-4)$ in $\Omega^R$ from $x_3$ to $U_1$ with force points $(x_4, U_2)$. }
\end{figure}

The density in~\eqref{eqn::2SLE_hittingpoint_density} is reminiscent of Schwarz-Christoffel formula. Indeed, Schwarz-Christoffel formula gives an equivalent description of the law of the random point $U$ which we present in the following corollary. 

\begin{corollary}\label{cor::2SLE_quad}
Assume the same setup as in Theorem~\ref{thm::2SLE_decomposition}. 
Let $\phi$ be a conformal map from $\Omega$ onto a quadrilateral with four corners $(\phi(x_1), \phi(x_2), \phi(x_3), \phi(x_4))$ and the interior angles are 
\begin{equation}\label{eqn::2SLE_quad_angles}
\pi(12/\kappa-1)\text{ at }\phi(x_1),\quad \pi(1-4/\kappa)\text{ at }\phi(x_j)\text{ for }j=2,3,4.
\end{equation}
See Figure~\ref{fig::2SLE_quad}. Then the law of $\phi(\eta_2(\tau_2))$ is uniform over $(\phi(x_4)\phi(x_1))$. 
\end{corollary}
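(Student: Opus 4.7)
The plan is to invoke the Schwarz--Christoffel formula and then perform a change of variables to turn the density in~\eqref{eqn::2SLE_hittingpoint_density} into a constant.

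First, I will fix the conformal map $\psi\colon \Omega\to \HH$ with $\psi(x_1)=\infty$ from Theorem~\ref{thm::2SLE_decomposition}, and consider the composition $F\coloneqq \phi\circ\psi^{-1}\colon \HH\to \phi(\Omega)$. This map sends the prevertices $\psi(x_2), \psi(x_3), \psi(x_4)\in\R$ and $\psi(x_1)=\infty$ to the four corners of the quadrilateral $\phi(\Omega)$, with interior angles prescribed by~\eqref{eqn::2SLE_quad_angles}. As a sanity check, the angles sum correctly: $\pi(12/\kappa-1)+3\pi(1-4/\kappa)=2\pi$. Since the corner at $\psi(x_1)=\infty$ is at infinity, the Schwarz--Christoffel formula (with the standard convention that the factor associated with the vertex at infinity is omitted) gives
\begin{equation*}
F'(w)=C\prod_{j=2}^{4}(w-\psi(x_j))^{(1-4/\kappa)-1}=C\prod_{j=2}^{4}(w-\psi(x_j))^{-4/\kappa},\qquad w\in \HH,
\end{equation*}
for some nonzero constant $C\in\C$.

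Next, I will use this to compute the law of $\phi(\eta_2(\tau_2))=F(U)$, where $U\coloneqq \psi(\eta_2(\tau_2))\in(\psi(x_4),\infty)$. The boundary arc $(\phi(x_4)\phi(x_1))$ is the straight segment that is the image under $F$ of the half-line $(\psi(x_4),\infty)$, and the arclength measure on this segment pulls back under $F$ to $|F'(u)|\,\ud u=|C|\prod_{j=2}^{4}(u-\psi(x_j))^{-4/\kappa}\,\ud u$ (the factors are real and positive on this half-line, so no absolute values are needed beyond the constant $C$). By Theorem~\ref{thm::2SLE_decomposition} the density of $U$ is $r(u)$ given by~\eqref{eqn::2SLE_hittingpoint_density}, so a change of variables yields that the density of $F(U)$ with respect to arclength on $(\phi(x_4)\phi(x_1))$ equals
\begin{equation*}
\frac{r(u)}{|F'(u)|}=\frac{1}{|C|\int_{\psi(x_4)}^{\infty}\prod_{j=2}^{4}(v-\psi(x_j))^{-4/\kappa}\,\ud v},
\end{equation*}
which is independent of $u$. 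Hence $\phi(\eta_2(\tau_2))$ is uniformly distributed on $(\phi(x_4)\phi(x_1))$.

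I expect no serious obstacle; the only point requiring some care is the bookkeeping of which factor is dropped when applying Schwarz--Christoffel in the presence of a prevertex at infinity, and verifying that the prescribed angles are consistent (both of which are handled above). The corollary is essentially a direct translation of~\eqref{eqn::2SLE_hittingpoint_density} into the intrinsic geometric language of the quadrilateral with the specified angles.
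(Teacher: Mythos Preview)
Your proof is correct and follows essentially the same approach as the paper: both identify $\phi\circ\psi^{-1}$ with the Schwarz--Christoffel map whose derivative is proportional to the density $r(u)$ in~\eqref{eqn::2SLE_hittingpoint_density}, and then conclude uniformity by a change of variables. The paper writes this as an explicit antiderivative $\xi(w)=\int_{\psi(x_2)}^{w}\prod_{j=2}^4(z-\psi(x_j))^{-4/\kappa}\,\ud z$ and checks uniformity via test functions, while you phrase it as a density transformation, but the content is identical.
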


\begin{figure}[ht!]
\begin{subfigure}[b]{\textwidth}
\begin{center}
\includegraphics[width=0.3\textwidth]{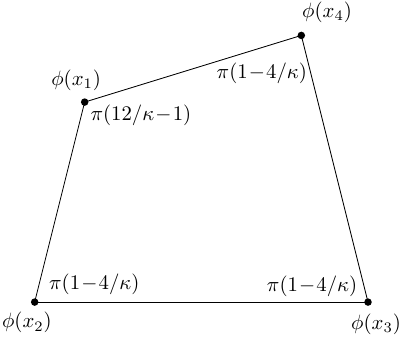}
\end{center}
\caption{A quadrilateral with four corners $(\phi(x_1), \phi(x_2), \phi(x_3), \phi(x_4))$ whose interior angles are given by~\eqref{eqn::2SLE_quad_angles}. }
\end{subfigure}\\
\vspace{1cm}
\begin{subfigure}[b]{0.3\textwidth}
\begin{center}
\includegraphics[width=\textwidth]{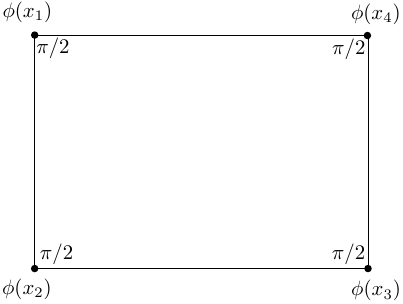}
\end{center}
\caption{$\kappa=8$.}
\end{subfigure}
$\quad$
\begin{subfigure}[b]{0.3\textwidth}
\begin{center}
\includegraphics[width=\textwidth]{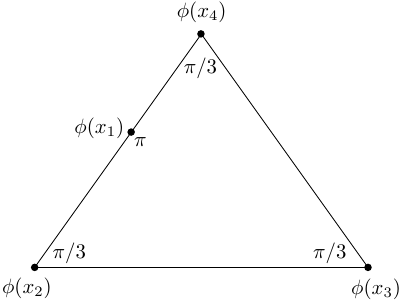}
\end{center}
\caption{$\kappa=6$.}
\end{subfigure}
$\quad$
\begin{subfigure}[b]{0.3\textwidth}
\begin{center}
\includegraphics[width=\textwidth]{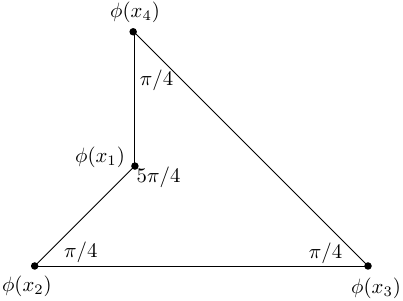}
\end{center}
\caption{$\kappa=16/3$.}
\end{subfigure}
\caption{\label{fig::2SLE_quad} 
Let $\phi$ be a conformal map from $\Omega$ onto a quadrilateral as in (a). The law of $\phi(\eta_2(\tau_2))$ is uniform over $(\phi(x_4)\phi(x_1))$. }
\end{figure}

Corollary~\ref{cor::2SLE_quad} is consistent with known results when $\kappa=6$ (Cardy's formula, proved in~\cite{SmirnovPercolationConformalInvariance}) and $\kappa=8$ (Proposition 1.3 of~\cite{HanLiuWuUST}), but we think the conclusion for general $\kappa\in (4,8)$ never appears in the literature before. See more discussion for Corollary~\ref{cor::2SLE_quad} in Section~\ref{subsec::2SLE_discussion}. 

\smallbreak

Next, we will give an application of the above decomposition of global 2-SLE. Suppose $(\eta_1, \eta_2)$ is global 2-$\SLE_{\kappa}$ with $\kappa\in (4,8)$. We are interested in the probability for  $\eta_1\cap\eta_2=\emptyset$. This probability is a conformally invariant quantity of the quad $(\Omega; x_1, x_2, x_3, x_4)$. Although we are not able to derive the explicit formula for such a probability, we could give the asymptotic of it when the quad is ``very tall". To this end, we first introduce extremal distance of quads. For a quad $(\Omega; x_1, x_2, x_3, x_4)$, there exists a unique $L>0$ and a unique conformal map $\phi$ from $\Omega$ onto the rectangle $[0,1]\times[0,\ii L/\pi]$ which sends $(x_1, x_2, x_3, x_4)$ to the four corners $(\ii L/\pi, 0, 1, 1+\ii L/\pi)$. We call $L=L(\Omega; x_1, x_2, x_3, x_4)$ the extremal distance between $(x_2x_3)$ and $(x_4x_1)$ in $\Omega$.\footnote{The definition of extremal distance here differs from the standard definition by a factor $\pi$. Our choice here will simplify notations in this article.} We derive the asymptotic of the probability for $\eta_1\cap\eta_2=\emptyset$ when $L\to\infty$ in the following proposition. 

\begin{proposition}\label{prop::global2_mono}
Fix $\kappa\in (4,8)$ and a quad $(\Omega; x_1, x_2, x_3, x_4)$ and suppose $(\eta_1, \eta_2)\in X_0(\Omega; x_1, x_2, x_3, x_4)$ is a global $2$-$\SLE_{\kappa}$ such that $\eta_1$ goes from $x_2$ to $x_1$ and $\eta_2$ goes from $x_3$ to $x_4$.
Let $L$ be the extremal distance between $(x_2x_3)$ and $(x_4x_1)$ in $\Omega$. 
From the conformal invariance, the probability for $\eta_1\cap \eta_2=\emptyset$ only depends on $L$ and we denote it by $p^{(2)}_{\kappa}(L)$. 
Then we have\footnote{For two functions $p(L)$ and $q(L)$, we write $p(L)\asymp q(L)$ as $L\to\infty$ if there exist constants $C\in (1,\infty)$ and $L_0\in (0,\infty)$ such that $C^{-1}\le p(L)/q(L)\le C$ for all $L\ge L_0$. }
\begin{equation}\label{eqn::continuousestimate}
p^{(2)}_{\kappa}(L)\asymp\exp(-(2-8/\kappa)L),\quad \text{as }L\to\infty.
\end{equation}
\end{proposition}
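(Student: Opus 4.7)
The plan is to combine Theorem~\ref{thm::2SLE_decomposition} with the density~\eqref{eqn::2SLE_hittingpoint_density} to express $p^{(2)}_\kappa(L)$ as an explicit integral of an $\SLE_\kappa(\rho)$ boundary-avoidance probability, then extract the asymptotic as $L\to\infty$. By conformal invariance, normalize $\psi\colon\Omega\to\HH$ with $\psi(x_2,x_3,x_4,x_1)=(0,1,1+\delta,\infty)$; standard Schwarz--Christoffel / elliptic-function asymptotics identify $\delta\asymp e^{-L}$ as $L\to\infty$, so~\eqref{eqn::continuousestimate} is equivalent to showing $p^{(2)}_\kappa(L)\asymp \delta^{2-8/\kappa}$ as $\delta\to 0$.

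For the upper bound, apply the decomposition in succession. By the second bullet of Theorem~\ref{thm::2SLE_decomposition}, conditional on $U_2=u$ the curve $\eta_1$ is $\SLE_\kappa(2,2,-4)$ in $\HH$ with force points $(1,1+\delta,u)$; by the third bullet, conditional further on $\eta_1$, the initial segment $\eta_2[0,\tau_2]$ is $\SLE_\kappa(2,-4)$ in $\Omega^R$. The non-crossing property of $(\eta_1,\eta_2)$ forces $\psi(U_2)<\psi(U_1)$ on $\{\eta_1\cap\eta_2=\emptyset\}$, and on this event the later portions of the two curves lie in disjoint components of $\Omega$; hence the whole event reduces to the $\SLE_\kappa(2,-4)$ for $\eta_2[0,\tau_2]$ avoiding the $\eta_1$-portion of $\partial\Omega^R$, whose conditional probability admits a closed-form Schramm-type hypergeometric expression. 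Integrating against the conditional law of $\eta_1$ and the density~\eqref{eqn::2SLE_hittingpoint_density} yields an explicit integral for $p^{(2)}_\kappa(L)$. In the regime $\delta\to 0$, the normalizing constant in~\eqref{eqn::2SLE_hittingpoint_density} scales as $Z(\delta)\asymp \delta^{1-8/\kappa}$ (since $4/\kappa<1<8/\kappa$ for $\kappa\in(4,8)$), and Frobenius analysis of the hypergeometric avoidance factor produces the combined upper bound $p^{(2)}_\kappa(L)\lesssim \delta^{2-8/\kappa}$.

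For the matching lower bound, exhibit an explicit favorable configuration: condition on $U_2$ in an appropriate range and on a confinement event for the $\SLE_\kappa(\rho)$ processes via standard containment / Bessel-comparison estimates, so that $\eta_2$ is forced into a region disjoint from $\eta_1$ with positive conditional probability, the product matching $\asymp \delta^{2-8/\kappa}$. The main obstacle will be the asymptotic analysis isolating the exponent $2-8/\kappa$, which arises as the second indicial root at the merging singularity of the associated hypergeometric equation; a conceptually cleaner alternative is to invoke the pure partition function framework of~\cite{FengLiuPeltolaWu2024}, where $p^{(2)}_\kappa(L)$ is identified (up to constants) with a ratio of explicit hypergeometric partition functions whose degenerate-quad asymptotic gives the exponent directly.
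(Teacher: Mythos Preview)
Your normalization is mismatched with the limit you want. With $\psi(x_2,x_3,x_4,x_1)=(0,1,1+\delta,\infty)$ the cross-ratio is $s=1/(1+\delta)$, so $\delta\to 0$ means $s\to 1$, which by Lemma~\ref{lem::Ls} corresponds to $L\to 0$, not $L\to\infty$. Your computation $Z(\delta)\asymp\delta^{1-8/\kappa}$ is therefore probing the wrong degeneration. In the correct limit (the paper uses $(y_0,y_1,y_2)=(0,s,1)$ with $s\to 0$), the normalizing integral in~\eqref{eqn::2SLE_hittingpoint_density} converges to a finite constant and contributes nothing to the exponent, so the mechanism you describe for producing $2-8/\kappa$ does not operate.

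Beyond the normalization, the argument is too schematic at the two places that matter. First, the claim that the $\SLE_\kappa(2,-4)$ avoidance probability given $(U_2,\eta_1)$ has a ``closed-form Schramm-type hypergeometric expression'' that you can then average over $\eta_1$ is where all the work lies; the dependence on the random geometry of $\eta_1$ does not reduce to a single cross-ratio, and no off-the-shelf formula applies. The paper handles this by conditioning on $\eta_1$ (not on $U_2$) and using that $\eta_2\mid\eta_1$ is plain $\SLE_\kappa$ (Lemma~\ref{lem::eta21underQ1}), for which Lemma~\ref{lem::aux2} gives an explicit hitting probability; the average over $\eta_1$ is then carried out via a purpose-built uniformly integrable martingale $R_t$ (Lemma~\ref{lem::aux1}), yielding an exact integral formula (Proposition~\ref{prop::nonintersecting}) for the constrained event $\{\gamma_1\cap\gamma_2=\emptyset,\ B_1\in(0,bs),\ B_2\in(bs,s)\}$. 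The exponent $2-8/\kappa$ emerges from the small-$s$ asymptotics of that integral (Corollary~\ref{coro::asy}). Second, your lower bound is not a proof: one must show the constrained event captures a fixed fraction of the full event $\{\gamma_1\cap\gamma_2=\emptyset\}$, which the paper does by a separate monotonicity argument (Lemma~\ref{lem::fullprobability}). Finally, invoking~\cite{FengLiuPeltolaWu2024} is circular here, since that work builds on the constructions of the present paper.
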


The asymptotic in Proposition~\ref{prop::global2_mono} will give us the asymptotic of the probability of the existence of two disjoint open paths in random-cluster model that we specify below. 

\subsection{Application to random-cluster models}
We fix parameters 
\begin{equation}\label{eqn::kappa_q}
\kappa\in (4,8),\qquad 
q= 4\cos^2\left(\frac{4\pi}{\kappa}\right)\in (0,4).
\end{equation}
We fix a quad $(\Omega; x_1, x_2, x_3, x_4)$ and suppose $(\Omega^{\delta}; x_1^{\delta}, x_2^{\delta}, x_3^{\delta}, x_4^{\delta})$ is a sequence of discrete quads that converges to $(\Omega; x_1, x_2, x_3, x_4)$ in the close-Carath\'{e}odory sense (see Section~\ref{sec::FKIsing_mono}). We consider critical random-cluster model with parameters $q\in (0,4)$ and $p=p_c(q)=\frac{\sqrt{q}}{1+\sqrt{q}}$ on $(\Omega^{\delta}; x_1^{\delta}, x_2^{\delta}, x_3^{\delta}, x_4^{\delta})$ with alternating boundary conditions: 
\begin{equation} \label{eqn::bc}
(x_2^{\delta}x_3^{\delta})\text{ is wired,}\quad(x_4^{\delta}x_1^{\delta})\text{ is wired,}\quad (x_2^{\delta}x_3^{\delta})\text{ and }(x_4^{\delta}x_1^{\delta}) \text{ are not wired outside of }\Omega^{\delta}.
\end{equation}

We are interested in the probability that there exist one open crossing or two open crossings in $\Omega^{\delta}$. 
More concretely, we consider the following two crossing events:
\begin{itemize}
\item We denote by $\conn^{(1)}=\conn^{(1)}(\Omega^{\delta}; x_1^{\delta}, x_2^{\delta}, x_3^{\delta}, x_4^{\delta})$ the event that there exists an open path in $\Omega^{\delta}$ connecting the boundary arcs $(x_2^{\delta}x_3^{\delta})$ and $(x_4^{\delta}x_1^{\delta})$. 
\item We denote by $\conn^{(2)}=\conn^{(2)}(\Omega^{\delta}; x_1^{\delta}, x_2^{\delta}, x_3^{\delta}, x_4^{\delta})$ the event that there exist two disjoint open paths in $\Omega^{\delta}$ connecting the boundary arcs $(x_2^{\delta}x_3^{\delta})$ and $(x_4^{\delta}x_1^{\delta})$. 
\end{itemize}

We first give a summary on the scaling limit of the probability of $\conn^{(1)}$. 
Using conformal field theory (CFT) heuristic, the probability for $\conn^{(1)}$ is predicted in the physics literature, see~\cite{SimmonsKlebanFloresZiff2011, FloresSimmonsKlebanZiffCrossingProba} and references therein: 
\begin{align}\label{eqn::RCM_Cardy}
p^{(1)}_{\mathrm{RCM}}(\Omega; x_1, x_2, x_3, x_4):=&
\lim_{\delta\to 0}\PP_{\mathrm{RCM}}\left[\conn^{(1)}(\Omega^{\delta}; x_1^{\delta}, x_2^{\delta}, x_3^{\delta}, x_4^{\delta})\right]\notag\\
=&\frac{s^{8/\kappa-1}\hF(\frac{4}{\kappa}, 1-\frac{4}{\kappa}, \frac{8}{\kappa}; s)}{s^{8/\kappa-1}\hF(\frac{4}{\kappa}, 1-\frac{4}{\kappa}, \frac{8}{\kappa}; s)+\sqrt{q}(1-s)^{8/\kappa-1}\hF(\frac{4}{\kappa}, 1-\frac{4}{\kappa}, \frac{8}{\kappa}; 1-s)},
\end{align}
where $\hF$ is a hypergeometric function {(see~\cite[Section 15]{AbramowitzHandbook} for definitions and properties of hypergeometric functions)} and $s$ is the cross-ratio:
\[s=\frac{(\varphi(x_4)-\varphi(x_1))(\varphi(x_3)-\varphi(x_2))}{(\varphi(x_3)-\varphi(x_1))(\varphi(x_4)-\varphi(x_2))},\]
and $\varphi$ is any conformal map from $\Omega$ onto $\HH$ such that $\varphi(x_1)<\varphi(x_2)<\varphi(x_3)<\varphi(x_4)$. 
Formula~\eqref{eqn::RCM_Cardy} is proved to be true by Chelkak and Smirnov~\cite[Eq.~(1.1)]{ChelkakSmirnovIsing} for FK-Ising model with $q=2$ using miraculous complex analysis tools. 

We focus on the asymptotic of the probability in~\eqref{eqn::RCM_Cardy} when the quad $(\Omega; x_1, x_2, x_3, x_4)$ is very tall. Let $L$ be the extremal distance between $(x_2x_3)$ and $(x_4x_1)$ in $\Omega$. It follows from~\eqref{eqn::RCM_Cardy} and Lemma~\ref{lem::Ls} that\footnote{For two functions $p(L)$ and $q(L)$, we write $p(L)\sim q(L)$ as $L\to\infty$ if the limit  of $p(L)/q(L)$ exists as $L\to\infty$ and $\lim_{L\to\infty}p(L)/q(L)\in (0,\infty)$. }
\begin{equation}\label{eqn::RCM_Cardy_asy}
p^{(1)}_{\mathrm{RCM}}(L):=p^{(1)}_{\mathrm{RCM}}(\Omega; x_1, x_2, x_3, x_4)\sim\exp(-(8/\kappa-1)L), \quad\text{as }L\to\infty.
\end{equation}

Next, let us explain our conclusion for $\conn^{(2)}$. In a similar way as above, we denote 
\begin{align}\label{eqn::RCM_mono}
p^{(2)}_{\mathrm{RCM}}(L):=p^{(2)}_{\mathrm{RCM}}(\Omega; x_1, x_2, x_3, x_4):=&
\lim_{\delta\to 0}\PP_{\mathrm{RCM}}^{\delta}\left[\conn^{(2)}(\Omega^{\delta}; x_1^{\delta}, x_2^{\delta}, x_3^{\delta}, x_4^{\delta})\right].
\end{align}
Then, the asymptotic in~\eqref{eqn::RCM_Cardy_asy} and Proposition~\ref{prop::global2_mono} give the following prediction: 
\begin{equation}\label{eqn::RCM_mono_asy}
p^{(2)}_{\mathrm{RCM}}(L)\asymp\exp(-L),\quad\text{as }L\to\infty.
\end{equation}
In particular, the asymptotic in~\eqref{eqn::RCM_mono_asy} is true for FK-Ising model and we wrap up our conclusion in Proposition~\ref{prop::FKIsing_mono}.

\begin{proposition}\label{prop::FKIsing_mono}
Fix a quad $(\Omega; x_1, x_2, x_3, x_4)$. Let $L$ be the extremal distance between $(x_2x_3)$ and $(x_4x_1)$ in $\Omega$. Suppose $(\Omega^{\delta}; x_1^{\delta}, x_2^{\delta}, x_3^{\delta}, x_4^{\delta})$ is a sequence of discrete quads that converges to $(\Omega; x_1, x_2, x_3, x_4)$ in the close-Carath\'{e}odory sense. The limit in~\eqref{eqn::RCM_mono} exists and is conformally invariant for FK-Ising model ($q=2$ and $\kappa=16/3$). Furthermore, we have 
\begin{equation}\label{eqn::FKIsing_mono}
p^{(2)}_{\mathrm{FKIsing}}(L)\asymp\exp(-L),\quad\text{as }L\to\infty.
\end{equation}
\end{proposition}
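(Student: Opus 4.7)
The plan is to reduce the statement to the continuous estimate of Proposition~\ref{prop::global2_mono} via the convergence of FK-Ising interfaces to global $2$-$\SLE_{16/3}$. The starting point is the factorization
\[
\PP^{\delta}_{\mathrm{FKIsing}}[\conn^{(2)}] = \PP^{\delta}_{\mathrm{FKIsing}}[\conn^{(1)}]\cdot \PP^{\delta}_{\mathrm{FKIsing}}[\conn^{(2)}\mid \conn^{(1)}].
\]
Under the boundary conditions~\eqref{eqn::bc}, the FK-Ising configuration always admits a pair of interfaces with one of two connectivity patterns, and $\conn^{(1)}$ is exactly the pattern in which $\eta_1^\delta$ goes from $x_2^\delta$ to $x_1^\delta$ and $\eta_2^\delta$ from $x_3^\delta$ to $x_4^\delta$. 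By~\cite{ChelkakSmirnovIsing}, the first factor converges to $p^{(1)}_{\mathrm{RCM}}(L)$, and~\eqref{eqn::RCM_Cardy_asy} gives $p^{(1)}_{\mathrm{RCM}}(L)\sim\exp(-L/2)$ at $\kappa=16/3$. By~\cite{CDCHKSConvergenceIsingSLE}, conditionally on $\conn^{(1)}$ the pair $(\eta_1^\delta,\eta_2^\delta)$ converges in law, in the metric~\eqref{eqn::curve_metric}, to global $2$-$\SLE_{16/3}$ $(\eta_1,\eta_2)$.

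The technical heart is the identification of the conditional limit $\PP^{\delta}_{\mathrm{FKIsing}}[\conn^{(2)}\mid \conn^{(1)}]\to p^{(2)}_{16/3}(L)$. A planar-duality argument shows that in the discrete model $\conn^{(2)}$ is equivalent to $\eta_1^\delta\cap\eta_2^\delta=\emptyset$: a shared medial vertex pinches the wired region so that every primal crossing between the two wired arcs must pass through a single bottleneck vertex, whereas full disjointness lets one read off two disjoint primal crossings along the inner sides of $\eta_1^\delta$ and $\eta_2^\delta$. Since $\{\eta_1\cap\eta_2=\emptyset\}=\{\dist(\eta_1,\eta_2)>0\}$ is an open subset of the curve-pair space, the Portmanteau theorem yields
\[
\liminf_{\delta\to 0}\PP^{\delta}_{\mathrm{FKIsing}}[\conn^{(2)}\mid \conn^{(1)}]\ge p^{(2)}_{16/3}(L),
\]
which already delivers the lower bound $p^{(2)}_{\mathrm{FKIsing}}(L)\ge c\exp(-L)$. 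The matching upper bound requires ruling out discrete configurations in which the two interfaces stay disjoint only by a microscopic margin yet converge to touching 2-SLE curves. I would obtain it by combining uniform RSW crossing estimates for critical FK-Ising with the marginal description of $\eta_1$ provided by Theorem~\ref{thm::2SLE_decomposition}, which expresses $\eta_1$ as an $\SLE_\kappa(2,2,-4)$ with an interior force point at the random hitting point $U_2$; this reduces the control of near-misses to classical one-force-point $\SLE_\kappa(\rho)$ hitting estimates on the arc $(x_4x_1)$.

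Combining the two factors yields $p^{(2)}_{\mathrm{FKIsing}}(L)=p^{(1)}_{\mathrm{RCM}}(L)\cdot p^{(2)}_{16/3}(L)$, which establishes both existence and conformal invariance of the limit in~\eqref{eqn::RCM_mono}. The asymptotic~\eqref{eqn::FKIsing_mono} then follows by multiplying~\eqref{eqn::RCM_Cardy_asy} with the bound $p^{(2)}_{16/3}(L)\asymp\exp(-L/2)$ from Proposition~\ref{prop::global2_mono}, using $8/\kappa-1=2-8/\kappa=1/2$ at $\kappa=16/3$. The main obstacle is closing the upper bound above: since the 2-SLE curves for $\kappa=16/3$ intersect each other with positive probability, $\{\eta_1\cap\eta_2=\emptyset\}$ is not automatically a continuity set of the limiting law, so a genuine crossing-estimate argument is required to complete the discrete-to-continuum identification.
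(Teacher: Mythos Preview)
Your overall strategy matches the paper's: factor through $\conn^{(1)}$, identify the conditional law with global $2$-$\SLE_{16/3}$, and show that the conditional probability of $\conn^{(2)}$ converges to $\QQ_2[\eta_1\cap\eta_2=\emptyset]$. Your lower bound via Portmanteau on the open set $\{\eta_1\cap\eta_2=\emptyset\}$ is also essentially what the paper does.

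The gap is in your upper bound. You propose to control discrete near-misses by invoking the decomposition of Theorem~\ref{thm::2SLE_decomposition} and reducing to $\SLE_\kappa(\rho)$ hitting estimates on the arc $(x_4x_1)$. This is the wrong tool for two reasons. First, Theorem~\ref{thm::2SLE_decomposition} is a statement about the \emph{continuous} limit; it cannot by itself bound $\QQ^\delta[\conn^{(2)}\cap\{\Dist(\eta_1^\delta,\eta_2^\delta)<r\}]$, which is a purely discrete quantity. Second, and more seriously, your sketch only addresses near-misses along the boundary arc $(x_4x_1)$, whereas the dominant contribution comes from the \emph{interior}: on $\conn^{(2)}\cap\{\Dist(\eta_1^\delta,\eta_2^\delta)<r\}$ the two interfaces come within distance $r$ at some bulk point, and no boundary-hitting estimate speaks to this.

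The paper handles the two regimes separately. Near the boundary it uses Lemma~\ref{lem::FK_mono_aux1}, which says $\QQ_2[(\eta_1\cap\eta_2\cap\partial\Omega)\neq\emptyset]=0$ for the continuous $2$-$\SLE$; combined with the coupling, this pushes the near-miss into an $\epsilon$-interior region $\Omega_\epsilon$. There the key observation is that $\conn^{(2)}\cap\{\Dist(\eta_1^\delta\cap\Omega_\epsilon,\eta_2^\delta\cap\Omega_\epsilon)<r\}$ forces a discrete six-arm event with pattern $1/0/1/1/0/1$ in some annulus $A(z;r,\epsilon/20)$. Since the interior six-arm exponent for critical FK with $q\in[1,4)$ is strictly larger than $2$ (Lemma~\ref{lem::six_arm}, proved from the universal five-arm exponent and RSW), a union bound over $O(r^{-2})$ centers gives a bound $c_1 r^{c_2}/\epsilon^{2+c_2}$, which vanishes as $r\to 0$. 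This six-arm argument is the missing ingredient in your sketch; RSW alone, without the arm-exponent input, does not give the required summability.
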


\begin{remark}
We note that the asymptotic in~\eqref{eqn::RCM_mono_asy} also holds for critical Bernoulli site percolation on triangular lattice, using a similar proof for Proposition~\ref{prop::FKIsing_mono}. 
\end{remark}

%{\color{red}The asymptotic in~\eqref{eqn::RCM_mono_asy} is known for Bernoulli site percolation on triangular lattice. The exponent $1$ is the same as the boundary two-arm exponents for percolation~\cite{SmirnovWernerCriticalExponents}. TBA: boundary two-arm exponent is universal. 
%see~\cite[Corollary~1.5]{ChelkakDuminilHonglerCrossingprobaFKIsing}, \cite[Section~1.3.3]{DuminilSidoraviciusTassionContinuityPhaseTransition} and~\cite[Proposition~6.6]{DCMTRCMFractalProperties}. }

\paragraph*{Outline and strategy.}
The rest of this article is organized as follows. 
We give preliminaries in Section~\ref{sec::pre}. 
We prove Theorem~\ref{thm::2SLE_decomposition}
 and Corollary~\ref{cor::2SLE_quad} in Section~\ref{sec::global2_deomposition}. 
We prove Proposition~\ref{prop::global2_mono} in Section~\ref{sec::global2_mono}. 
The proofs for Theorem~\ref{thm::2SLE_decomposition} and Proposition~\ref{prop::global2_mono} in Sections~\ref{sec::global2_deomposition}-\ref{sec::global2_mono} rely on special martingale observables (Lemma~\ref{lem::mart1} and Lemma~\ref{lem::aux1}). To construct such observables, we are inspired by Coulomb gas formalism of SLE partition functions in CFT, but our proof does not use any background of CFT. We prove Proposition~\ref{prop::FKIsing_mono} in Section~\ref{sec::FKIsing_mono}. Roughly speaking, we will show in Section~\ref{sec::FKIsing_mono} that the probability $p_{\mathrm{RCM}}^{(2)}(L)$ in~\eqref{eqn::RCM_mono} is the product of $p_{\mathrm{RCM}}^{(1)}(L)$ in~\eqref{eqn::RCM_Cardy_asy} and $p_{\kappa}^{(2)}(L)$ in~\eqref{eqn::continuousestimate}, because conditional on $\conn^{(1)}$, the scaling limit of the pair of interfaces in random-cluster model converges to global 2-$\SLE_{\kappa}$ (Lemma~\ref{lem::cvg_global_sle}). 
In Appendix~\ref{appendix::crossratio_extremaldistance}, we derive the relation between the cross-ratio and the extremal distance of quads.

\paragraph*{Acknowledgments.}
The authors acknowledge two anonymous referees for helpful comments and suggestions. 
M.L. is supported by Knut and Alice Wallenberg Foundation.
Y. F. and H.W. are supported by Beijing Natural Science Foundation (JQ20001) and New Cornerstone Investigator Program 100001127. 
H.W. is partly affiliated at Yanqi Lake Beijing Institute of Mathematical Sciences and Applications, Beijing, China.

\section{Preliminaries}
\label{sec::pre}
\paragraph*{Gamma functions and Beta functions.}
In later sections, we will use the connection between Beta function and Gamma function. Fix $\kappa\in(4,8)$ and recall the definition of $q$ in~\eqref{eqn::kappa_q}. Recall that Beta function~\cite[Eq.~(6.2.1)]{AbramowitzHandbook} is given by 
\[B(z,w)=\int_0^1 t^{z-1}(1-t)^{w-1}\ud t.\]
To simplify notations, we denote 
\begin{equation}\label{eqn::constant_def}
C_\kappa=\frac{\Gamma(1-4/\kappa)^2}{\sqrt{q} \, \Gamma(2-8/\kappa)}. 
\end{equation}
Then we have
\begin{equation}\label{eqn::constC_Beta}
B(1-4/\kappa, 1-4/\kappa)=\sqrt{q}C_{\kappa},\quad 
B(1-4/\kappa,8/\kappa-1)=C_\kappa.
\end{equation}
This is because 
\begin{align*}
B(1-4/\kappa, 1-4/\kappa)=& \frac{\Gamma(1-4/\kappa)^2}{\Gamma(2-8/\kappa)};\tag{due to~\cite[Eq.~(6.2.2)]{AbramowitzHandbook}}\\
B(1-4/\kappa,8/\kappa-1)=&\frac{\Gamma(1-4/\kappa)\Gamma(8/\kappa-1)}{\Gamma(4/\kappa)}\tag{due to~\cite[Eq.~(6.2.2)]{AbramowitzHandbook}}\\
=&\frac{\Gamma(1-4/\kappa)^2}{\Gamma(2-8/\kappa)}\times\frac{\sin\left(4\pi/\kappa\right)}{\sin\left(\pi(8-\kappa)/\kappa\right)}\tag{due to~\cite[Eq.~(6.1.17)]{AbramowitzHandbook}}\\
=&\frac{\Gamma(1-4/\kappa)^2}{\Gamma(2-8/\kappa)\times(-2\cos(4\pi/\kappa))}. 
\end{align*}

\paragraph*{Poisson kernel.}
For a Dobrushin domain $(\Omega; x, y)$, we say it is nice if the boundary points $x, y$ lie on sufficiently regular segments of $\partial\Omega$ (e.g. $C^{1+\eps}$ for some $\eps>0$).
The (boundary) Poisson kernel $H(\Omega; x, y)$ is defined for {\color{blue}a }nice Dobrushin domain $(\Omega; x, y)$.  When $\Omega=\HH$, we have
\begin{equation}\label{eqn::Poisson_def_H}
H(\HH; x,y)=|y-x|^{-2},\quad x,y\in\R. 
\end{equation}
For a nice Dobrushin domain $(\Omega; x, y)$, we extend its definition via conformal covariance: 
\begin{equation}\label{eqn::Poisson_def_general}
H(\Omega; x, y)=|\varphi'(x)\varphi'(y)|H(\HH; \varphi(x), \varphi(y)), 
\end{equation}
where $\varphi$ is any conformal map from $\Omega\to \HH$.
Poisson kernel satisfies the following monotonicity.
\begin{lemma}\label{lem::Poisson_mono}
Let $(\Omega; x, y)$ be a nice Dobrushin domain and let $U\subset\Omega$ be a subdomain that agrees with $\Omega$ in a neighborhood of the boundary arc $(yx)$. Then we have
\[H(U; x, y)\le H(\Omega; x, y).\] 
\end{lemma}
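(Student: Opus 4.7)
The plan is to identify the boundary Poisson kernel with the inward normal derivative of the classical (interior-to-boundary) Poisson kernel, and then deduce the claim from domain monotonicity of harmonic measure. Let $P_\Omega(z, \cdot)$ denote the Poisson kernel of $\Omega$, i.e.\ the density of the exit distribution $\omega_\Omega(z, \cdot)$ of Brownian motion started at $z \in \Omega$. I would first verify that
\begin{equation*}
H(\Omega; x, y) \;=\; \pi \lim_{\epsilon \to 0^+} \epsilon^{-1}\, P_\Omega(x + \epsilon\, n_x,\, y),
\end{equation*}
where $n_x$ is the inward unit normal to $\partial\Omega$ at the regular boundary point $x$. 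In $\HH$ this is immediate from $P_\HH(x + i\epsilon, y) = \pi^{-1}\epsilon\,|x + i\epsilon - y|^{-2}$, which yields the limit $|y-x|^{-2} = H(\HH; x, y)$. For a general nice Dobrushin domain, the conformal covariance $P_\Omega(z, y) = |\varphi'(y)|\, P_\HH(\varphi(z), \varphi(y))$ combined with the expansion $\varphi(x + \epsilon n_x) = \varphi(x) + i\epsilon\, |\varphi'(x)| + O(\epsilon^2)$ (the factor $i$ arising because $\varphi$ sends the inward normal at $x$ to the inward normal at $\varphi(x) \in \R$) reproduces the factor $|\varphi'(x)\varphi'(y)|$ in~\eqref{eqn::Poisson_def_general}.

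The main probabilistic input is domain monotonicity of harmonic measure. For $z \in U$ and a Borel set $A \subset \partial U \cap \partial\Omega$, the strong Markov property at the first exit time $\tau_U$ of $U$ gives
\begin{equation*}
\omega_\Omega(z, A) \;=\; \omega_U(z, A) \;+\; \int_{\partial U \setminus \partial\Omega} \omega_\Omega(w, A)\, \omega_U(z, dw) \;\ge\; \omega_U(z, A),
\end{equation*}
since a Brownian motion that exits $U$ at a point $w$ in the interior of $\Omega$ may still subsequently hit $A$. Passing to densities along the regular boundary portion containing $y$ (legitimate because the agreement hypothesis places $y$ in the common smooth part of both boundaries) yields $P_U(z, y) \le P_\Omega(z, y)$ for every $z \in U$.

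By the agreement hypothesis, a neighborhood of $x$ in $\overline{U}$ coincides with the corresponding neighborhood in $\overline{\Omega}$, so $n_x$ agrees in the two domains and $x + \epsilon n_x \in U$ for all sufficiently small $\epsilon > 0$. Applying the pointwise bound at $z = x + \epsilon n_x$, dividing by $\epsilon$, and sending $\epsilon \to 0^+$ gives
\begin{equation*}
H(U; x, y) \;=\; \pi \lim_{\epsilon \to 0^+} \epsilon^{-1}\, P_U(x + \epsilon n_x, y) \;\le\; \pi \lim_{\epsilon \to 0^+} \epsilon^{-1}\, P_\Omega(x + \epsilon n_x, y) \;=\; H(\Omega; x, y),
\end{equation*}
as required. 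The only nontrivial step is the normal-derivative representation of $H$; once that is in place, the monotonicity is an essentially immediate consequence of the strong Markov property for Brownian motion, and no conformal map back to $\HH$ is needed in the comparison.
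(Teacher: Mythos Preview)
Your proof is correct. The paper's own proof is a single sentence citing the two defining formulas~\eqref{eqn::Poisson_def_H} and~\eqref{eqn::Poisson_def_general} and nothing more, so it does not really specify a method. Your route---identifying $H$ as the inward normal derivative of the interior Poisson kernel and then invoking domain monotonicity of harmonic measure via the strong Markov property---is a standard and complete way to supply the missing details. If anything, the paper's citation of only the conformal-covariance definition suggests it had a purely deterministic argument in mind (reduce to $\Omega=\HH$ by~\eqref{eqn::Poisson_def_general}, then show $|\psi'(x)\psi'(y)|\le(\psi(y)-\psi(x))^2/(y-x)^2$ for the uniformizing map $\psi\colon U\to\HH$), but that inequality itself is most cleanly justified through exactly the probabilistic interpretation you give. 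Your argument has the advantage of making the monotonicity step transparent without any conformal distortion estimate.
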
 
\begin{proof}
This follows from~\eqref{eqn::Poisson_def_H} and~\eqref{eqn::Poisson_def_general}. 
\end{proof}

\paragraph*{Loewner chains and SLE.}
We call a compact subset $K$ of $\overline{\HH}$ an $\HH$-hull if $\HH\setminus K$ is simply connected. Riemann's mapping theorem asserts that there exists a unique conformal map $g_K$ from $\HH\setminus K$ onto $\HH$ normalized at $\infty$: $\lim_{z\to\infty}|g_K(z)-z|=0$. We consider a collection of growing $\HH$-hulls. They are associated with families of conformal maps $(g_t, t\ge 0)$ by solving the Loewner equation: for $z\in\overline{\HH}$, 
\[\partial_t g_t(z)=\frac{2}{g_t(z)-W_t},\quad g_0(z)=z, \]
where $(W_t, t\ge 0)$ is a real-valued continuous function, which we call driving function. We define $\tau(z)$ to be the swallowing time of $z$, i.e. $\sup\{t\ge 0: \min_{s\in [0,t]}|g_s(z)-W_s|>0\}$ and write $K_t=\{z\in\HH: \tau(z)\le t\}$. Then $g_t$ is the unique conformal map from $\HH\setminus K_t$ onto $\HH$ normalized at $\infty$. The collection of $\HH$-hulls $(K_t, t\ge 0)$ is called a Loewner chain. 

For $\kappa\ge0$, (chordal) $\SLE_{\kappa}$ in $\HH$ from $0$
to $\infty$ is a random Loewner chain driven by $W_t=\sqrt{\kappa}B_t$ where $(B_t, t\ge 0)$ is a standard one-dimensional Brownian motion. 
Rohde and Schramm~\cite{RohdeSchrammSLEBasicProperty} prove that there exists a curve $\gamma$ such that $\HH\setminus K_t$ is the same as the unbounded connected component of $\HH\setminus\gamma[0,t]$. We call this curve $\SLE_{\kappa}$ curve in $\HH$ from $0$ to $\infty$. Such  a curve is simple when $\kappa\in (0,4]$ and self-touching when $\kappa\in (4,8)$. 
 For a Dobrushin domain $(\Omega; x, y)$, an $\SLE_{\kappa}$ in $\Omega$ from $x$ to $y$ can be defined via conformal image: suppose $\varphi$ is any conformal map from $\Omega$ onto $\HH$ such that $\varphi(x)=0$ and $\varphi(y)=\infty$, and then we define $\SLE_{\kappa}$ in $\Omega$ from $x$ to $y$ to be the $\varphi^{-1}$ image of an $\SLE_{\kappa}$ in $\HH$ from $0$ to $\infty$. 

We record a technical lemma which we will frequently use in later sections. 
\begin{lemma}\label{lem::technical}
Fix $y_0<y_1<y_2<v<u$. Suppose $\gamma$ is  a continuous curve starting from $y_0$ which admits a continuous driving function. 
Denote by $W$ the driving function and by $(g_t, t\ge 0)$ the corresponding conformal maps. 
Suppose $\gamma\cap[y_1, v]=\emptyset$ and $\gamma$ hits $(v,+\infty)$ at $u$. Denote by $T$ the hitting time of $\gamma$ at $(y_2,+\infty)$. Then, we have 
\begin{equation}\label{eqn::ratio_estimate1}
\lim_{t\to T}\frac{g_t(y_2)-W_t}{g_t(y_1)-W_t}=1\quad\text{and}\quad\lim_{t\to T}\frac{g_t(u)-g_t(y_2)}{g_t(u)-g_t(y_1)}=1.
\end{equation}
Moreover, denote by $\LC_\gamma$ the connected component of $\HH\setminus\gamma$ whose boundary contains $v$ and choose any conformal map $\xi_\gamma$ from $\LC_\gamma$ onto $\HH$ such that $\xi_\gamma(\gamma(T))=\infty$.Then, we have
\begin{equation}\label{eqn::ratio_estimate2}
\lim_{t\to T}\frac{g_t'(v)^2}{g_t'(y_1)g_t'(y_2)}=\frac{\xi'_\gamma(v)^2}{\xi'_\gamma(y_1)\xi'_\gamma(y_2)}.
\end{equation}
\end{lemma}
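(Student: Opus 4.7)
The plan is to establish, as the main technical ingredient, that $g_t$ restricted to the pocket $\LC_\gamma$, after an appropriate Möbius rescaling, converges to the intrinsic conformal map $\xi_\gamma$; the three statements then follow by direct computation.

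First, observe that $\LC_\gamma \subset \HH \setminus K_t$ for every $t < T$ (the pocket has not yet been swallowed), so $g_t|_{\LC_\gamma}$ is a conformal embedding whose image is a finger-shaped subdomain of $\HH$ with boundary arc on $\R$ equal to the interval $(W_t, g_t(u))$. As $t \to T$, both $W_t$ and $g_t(u)$ tend to a common limit $W_T$, so this base shrinks to a point. Introduce the Möbius transformation $M_t$ of $\HH$ uniquely specified by
\begin{equation*}
M_t(g_t(u))=\infty, \qquad M_t(g_t(v))=\xi_\gamma(v), \qquad M_t(g_t(y_1))=\xi_\gamma(y_1),
\end{equation*}
so that $\phi_t := M_t \circ g_t$ is a conformal injection $\LC_\gamma \hookrightarrow \HH$ agreeing with $\xi_\gamma$ at three boundary points.

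The main claim is that $\phi_t \to \xi_\gamma$ as $t \to T$, uniformly on compact subsets of $\LC_\gamma$ and continuously on a neighborhood (in $\overline{\LC_\gamma}$) of any boundary point in the interior of $\partial \LC_\gamma$, i.e., away from the corners $y_0$ and $u$. By Carath\'eodory's kernel theorem and uniqueness of the conformal bijection $\LC_\gamma \to \HH$ fixing three boundary values, this reduces to showing that $M_t(g_t(\LC_\gamma)) \to \HH$ in the Carath\'eodory kernel sense with respect to a base point in the pocket. Geometrically, $M_t$ stretches the shrinking base of the finger to a fixed scale while its vertical extent in $\HH$ remains bounded below, so the rescaled domain should exhaust $\HH$. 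Making this rigorous---via a geometric estimate controlling the finger's vertical extent against the shrinking base---is the principal obstacle.

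Given the convergence $\phi_t \to \xi_\gamma$, the Möbius identity
\begin{equation*}
M_t(z_1)-M_t(z_2) = \frac{\beta_t(z_2-z_1)}{(z_1 - g_t(u))(z_2 - g_t(u))}
\end{equation*}
(for the pole $g_t(u)$ and some constant $\beta_t$) yields~\eqref{eqn::ratio_estimate1} in its second form by taking $(z_1,z_2) \in \{(g_t(y_2),g_t(y_1)),(g_t(v),g_t(y_1))\}$ and using that the $\phi_t$-differences converge to the corresponding $\xi_\gamma$-differences; this extracts the ratio $(g_t(u)-g_t(y_2))/(g_t(u)-g_t(y_1))$ and forces it to tend to $1$. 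The first statement in~\eqref{eqn::ratio_estimate1} follows from the same identity, after observing that $M_t(W_t) \to \infty$ as $t \to T$: indeed $W_t \to W_T = \lim g_t(u)$ and $g_t(u)$ is the pole of $M_t$, so $M_t(W_t)$ blows up, which forces $(g_t(y_2)-W_t)/(g_t(y_1)-W_t) \to 1$.

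For~\eqref{eqn::ratio_estimate2}, the boundary convergence of $\phi_t$ near each $y_i \in \{y_1,y_2,v\}$ yields convergence of derivatives $\phi_t'(y_i) = M_t'(g_t(y_i))\,g_t'(y_i) \to \xi_\gamma'(y_i)$. Using $M_t'(z) = \beta_t/(z - g_t(u))^2$, one computes
\begin{equation*}
\frac{g_t'(v)^2}{g_t'(y_1)\,g_t'(y_2)} = \frac{\phi_t'(v)^2}{\phi_t'(y_1)\,\phi_t'(y_2)} \cdot \frac{(g_t(v)-g_t(u))^4}{(g_t(y_1)-g_t(u))^2\,(g_t(y_2)-g_t(u))^2},
\end{equation*}
and the first factor tends to $\xi_\gamma'(v)^2/[\xi_\gamma'(y_1)\xi_\gamma'(y_2)]$, while the second factor tends to $1$ by~\eqref{eqn::ratio_estimate1} applied to each pair among $\{y_1,y_2,v\}$. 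This yields~\eqref{eqn::ratio_estimate2}.
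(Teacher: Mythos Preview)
Your treatment of~\eqref{eqn::ratio_estimate2} is essentially the paper's argument: compose $g_t$ with a M\"obius map sending $g_t(\gamma(T))$ to $\infty$, observe that the resulting map $\phi_t$ (the paper calls it $\xi_t$, normalized slightly differently) converges to $\xi_\gamma$, and then check that the correction factor relating $g_t'$-ratios to $\phi_t'$-ratios tends to $1$ by~\eqref{eqn::ratio_estimate1}.

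The gap is in your derivation of~\eqref{eqn::ratio_estimate1}. The M\"obius identity $M_t(z_1)-M_t(z_2) = \beta_t(z_2-z_1)/[(z_1-g_t(u))(z_2-g_t(u))]$, combined with convergence of the $\phi_t$-differences, only controls \emph{cross-ratios} of the points $W_t,g_t(y_1),g_t(y_2),g_t(v),g_t(u)$. Concretely, dividing your two displayed instances gives
\[
\frac{\phi_t(y_2)-\phi_t(y_1)}{\phi_t(v)-\phi_t(y_1)}
=\frac{(g_t(y_2)-g_t(y_1))(g_t(u)-g_t(v))}{(g_t(v)-g_t(y_1))(g_t(u)-g_t(y_2))},
\]
which is a cross-ratio converging to a finite nonzero constant, not the single ratio $(g_t(u)-g_t(y_2))/(g_t(u)-g_t(y_1))$. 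Since five real points carry only two independent cross-ratios, no combination of such identities can isolate the non-M\"obius-invariant quantities in~\eqref{eqn::ratio_estimate1}. Your claim that $M_t(W_t)\to\infty$ has the same defect: the pole of $M_t$ is at $g_t(u)$ but its residue $\beta_t$ also degenerates, and determining whether $M_t(W_t)$ blows up again requires knowing ratios of the type you are trying to prove --- the argument is circular.

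The paper does not attempt to derive~\eqref{eqn::ratio_estimate1} from kernel convergence; it cites it as a standard estimate (essentially a harmonic-measure bound: $g_t(y_2)-g_t(y_1)$ is comparable to the harmonic measure of $[y_1,y_2]$ from $\infty$ in $\HH\setminus K_t$, which vanishes faster than $g_t(y_1)-W_t$ as the pocket closes). You should treat~\eqref{eqn::ratio_estimate1} as input, and then your argument for~\eqref{eqn::ratio_estimate2} goes through.
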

\begin{proof} 
For~\eqref{eqn::ratio_estimate1}, this is a standard estimate, see e.g.~\cite[Equation (4.6)]{LiuWuCrossingProbaMGFF}.
{For~\eqref{eqn::ratio_estimate2}, we define $\xi_t$ to be the conformal map from the unbounded component of $\HH\setminus\gamma[0,t]$ onto $\HH$ such that $\xi_t(y_1)=0$, $\xi_t(y_2)=1$ and $\xi_t(\gamma(T))=\infty$:
\[\xi_t(z):=\frac{g_t(z)-g_t(y_1)}{g_t(\gamma(T))-g_t(z)}\times \frac{g_t(\gamma(T))-g_t(y_2)}{g_t(y_2)-g_t(y_1)}.\] 
Then, we have
\[\frac{g_t'(v)^2}{g_t'(y_1)g_t'(y_2)}=\frac{\xi'_t(v)^2}{\xi'_t(y_1)\xi'_t(y_2)}\times\frac{(g_t(\gamma(T))-g_t(v))^2}{(g_t(\gamma(T))-g_t(y_1))(g_t(\gamma(T))-g_t(y_2))}.\]
Thus, from~\eqref{eqn::ratio_estimate1}, we have
\[
\lim_{t\to T}\frac{g_t'(v)^2}{g_t'(y_1)g_t'(y_2)}=
\lim_{t\to T}\frac{\xi'_t(v)^2}{\xi'_t(y_1)\xi'_t(y_2)}=\frac{\xi'_{\gamma}(v)^2}{\xi'_{\gamma}(y_1)\xi'_{\gamma}(y_2)}.
\]
Note that the right side of~\eqref{eqn::ratio_estimate2} is independent of the choice of $\xi_{\gamma}$ as long as $\xi_{\gamma}(\gamma(T))=\infty$. This completes the proof.
}
\end{proof}

\paragraph*{Loewner chains associated with partition functions.}
 We denote
\[\chamber_{n}:=\{(x_1, \ldots, x_{n})\in\R^{n}: x_1<\cdots<x_{n}\}.\] 
A partition function is a positive smooth function 
$\PartF: \chamber_n\to \R$
satisfying the following two properties: for some fixed $i\in \{1, \ldots, n\}$ and scaling exponents $(\Delta_j, 1\le j\le n)$ with $\Delta_j\in\R$, 
\begin{itemize}
\item Partial differential equation: 
\begin{equation}\label{eqn::PDE_general}
\left[\frac{\kappa}{2}\partial_i^2+\sum_{j\neq i}\left(\frac{2}{x_j-x_i}\partial_j-\frac{2\Delta_j}{(x_j-x_i)^2}\right)\right]\PartF(x_1, \ldots, x_n)=0.
\end{equation}
\item M\"{o}bius covariance: for all M\"{o}bius maps $\varphi$ of $\HH$ such that $\varphi(x_1)<\cdots<\varphi(x_n)$, 
\begin{equation}\label{eqn::COV_general}
\PartF(x_1, \ldots, x_n)=\prod_{j=1}^n \varphi'(x_j)^{\Delta_j}\times \PartF(\varphi(x_1), \ldots, \varphi(x_n)). 
\end{equation} 
\end{itemize} 

Suppose $\gamma$ is an $\SLE_{\kappa}$ in $\HH$ from $x_i$ to $\infty$, and denote by $(W_t, t\ge 0)$ its driving function and by $(g_t, t\ge 0)$ the corresponding family of conformal maps. As $\PartF$ satisfies~\eqref{eqn::PDE_general}, the following process is a local martingale for $\gamma$: 
\begin{equation}\label{eqn::mart_general}
M_t(\PartF):=\prod_{j\neq i} g_t'(x_j)^{\Delta_j}\times \PartF(g_t(x_1), \ldots, g_t(x_{i-1}), W_t, g_t(x_{i+1}), \ldots, g_t(x_n)). 
\end{equation}
We call Loewner chain associated to partition function $\PartF$ 
starting from $x_i$ (with launching points $(x_1, \ldots, x_n)$)
the process $\gamma$ weighted by the local martingale~\eqref{eqn::mart_general}. 
This process is well-defined up to the swallowing time of $x_{i-1}$ or $x_{i+1}$. 
For the Loewner chain associated with $\PartF$ starting from $x_i$ 
(with launching points $(x_1, \ldots, x_n)$), its driving function $W_t$ is the solution to the following system of SDEs: 
\begin{equation}
\begin{cases}
\ud W_t=\sqrt{\kappa}\ud B_t+\kappa(\partial_i\log\PartF)(g_t(x_1), \ldots, g_t(x_{i-1}), W_t, g_t(x_{i+1}), \ldots, g_t(x_n))\ud t, \quad W_0=x_i; \\
\partial_t g_t(x_j)=\frac{2}{g_t(x_j)-W_t},\quad g_0(x_j)=x_j,\quad \text{for }j\neq i. 
\end{cases}
\end{equation}
For a general polygon $(\Omega; x_1, \ldots, x_n, y)$, we define Loewner chain associated with $\PartF$ in $\Omega$ from $x_i$ to $y$ via conformal image: suppose $\varphi$ is any conformal map from $\Omega$ onto $\HH$ such that $\varphi(x_1)<\cdots<\varphi(x_n)$ and $\varphi(y)=\infty$, then it is the $\varphi^{-1}$ image of the Loewner chain associated with $\PartF$
starting from $\varphi(x_i)$ (with launching points $(\varphi(x_1), \ldots, \varphi(x_n))$).  

Note that $\frac{6-\kappa}{2\kappa}$ is a very special scaling exponent and in the following, we denote 
\[h=\frac{6-\kappa}{2\kappa}.\]
{An $\SLE_{\kappa}$ in $\HH$ from $x$ to $y$ is the same as $\SLE_\kappa(\kappa-6)$ in $\HH$ from $x$ to $\infty$ with force point $y$ (see~\cite[Theorem 3]{SchrammWilsonSLECoordinatechanges})}, thus its Loewner chain is associated to the partition function
\begin{equation*}
\PartF(x, y)=|x-y|^{-2h}.
\end{equation*}

An $\SLE_{\kappa}(\rho)$ process is a generalization of $\SLE_{\kappa}$ in which one keeps track of additional marked points. Fix 
$x_1<\cdots<x_n$ and $\rho_1, \ldots, \rho_n\in\R$, the following function 
\begin{equation}\label{eqn::partf_slekapparho}
\PartF(x_1, \ldots, x_n):=\prod_{j\neq i}|x_j-x_i|^{\rho_j/\kappa}\times \prod_{j\neq i, r\neq i, j<r}|x_j-x_r|^{\rho_j\rho_r/(2\kappa)}
\end{equation}
satisfies~\eqref{eqn::PDE_general} and~\eqref{eqn::COV_general} with 
\begin{equation*}
\Delta_i=h,\quad \text{and}\quad \Delta_j=\frac{\rho_j(\rho_j+4-\kappa)}{4\kappa},\quad\text{for }j\neq i.
\end{equation*}
The Loewner chain associated with $\PartF$ in~\eqref{eqn::partf_slekapparho} is the so-called $\SLE_{\kappa}(\rho_1, \ldots, \rho_{i-1}; \rho_{i+1}, \ldots, \rho_n)$ process in $\HH$ from $x_i$ to $\infty$ with force points $(x_1, \ldots, x_{i-1}; x_{i+1}, \ldots, x_n)$.

\section{Decomposition of global 2-SLE}
\label{sec::global2_deomposition}

\begin{definition}\label{def::Q2}
Fix $\kappa\in (4,8)$ and a quad $(\Omega; x_1, x_2, x_3, x_4)$. 
We construct the probability measure  $\QQ_2=\QQ_2(\Omega; x_1, x_2, x_3, x_4)$ on $X_0(\Omega; x_1, x_2, x_3, x_4)$ as follows. 
We fix a M\"obius transformation $\psi$ from $\Omega$ onto $\HH$ such that $\psi(x_1)=\infty$ and we denote 
\[y_0=\psi(x_2),\quad y_1=\psi(x_3),\quad y_2=\psi(x_4).\]
\begin{itemize}
\item 
We first sample a random point $u\in (y_2,\infty)$, the law of which has density 
\begin{equation}\label{eqn::Q2density}
r(u)=\frac{\prod_{j=0}^2(u-y_j)^{-4/\kappa}}{\int_{y_2}^{\infty}\prod_{j=0}^2(v-y_j)^{-4/\kappa}\ud v}.
\end{equation}
\item Given $u$, suppose $\gamma_1$ is $\SLE_{\kappa}(2,2,-4)$ in $\HH$ from $y_0$ to $\infty$ with force points $(y_1, y_2, u)$. Note that almost surely, $\gamma_1$ does not hit $[y_1,u]$ {by~\cite[Lemma 15]{DubedatSLEDuality}}. Denote by $\tau_1$ the hitting time of $\gamma_1$ at $(u,\infty)$.
\item Given $(u, \gamma_1)$, denote by $\LC_{1}$ the connected component of $\HH\setminus\gamma_1$ having $(y_1,y_2)$ on the boundary. Suppose $\gamma_2$ is $\SLE_{\kappa}(2, -4)$ in $\LC_{1}$ from $y_1$ to $\gamma_1(\tau_1)$ with force points $(y_2, u)$, up to the first hitting time of $(y_2,\infty)$. Denote by $\tau_2$ the hitting time of $\gamma_2$ at $(y_2,\infty)$. 
Note that 
$\gamma_2[0,\tau_2]$ does not hit $[y_2,u)\cup(u,\infty)$ and it hits $(y_2, \infty)$ at $u$ {by~\cite[Lemma 15]{DubedatSLEDuality}}.
Denote by $\LC_{2}$ the connected component of $\HH\setminus\gamma_2[0,\tau_2]$ having $y_2$ on the boundary. Define $\gamma_2$ to be the concatenation of $\gamma_2[0,\tau_2]$ and a conditional $\SLE_\kappa$ in $\LC_{2}$ from $u$ to $y_2$. See Figure~\ref{fig::Q2}. 
\end{itemize}
We define $\QQ_2=\QQ_2(\Omega; x_1, x_2, x_3, x_4)$ to be the law of $(\psi^{-1}(\gamma_1),\psi^{-1}(\gamma_2))$. Note that this definition is independent of the choice of $\psi$ once we assume $\psi(x_1)=\infty$. 
\end{definition}

\begin{figure}[ht!]
\begin{center}
\includegraphics[width=0.6\textwidth]{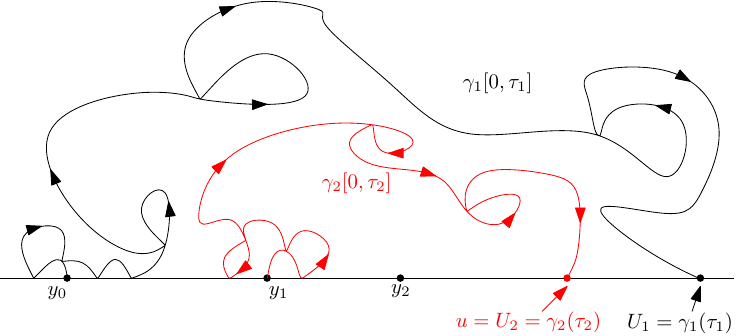}
\end{center}
\caption{\label{fig::Q2} {An illustration of $(\gamma_1,\gamma_2)$ in Definition~\ref{def::Q2}. The black curve is $\gamma_1$ and the red curve is $\gamma_2$. The hitting point of $\gamma_1$ at $(y_2,\infty)$ is $U_1$ and the hitting point of $\gamma_2$ at $(y_2,\infty)$ is $u=U_2$}. }
\end{figure}

\begin{proposition}\label{prop::2SLE_conditionallaw}
The measure $\QQ_2$ in Definition~\ref{def::Q2} is the global $2$-$\SLE_{\kappa}$ on $X_0(\Omega; x_1, x_2, x_3, x_4)$. 
\end{proposition}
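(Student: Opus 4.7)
The plan is to verify that $\QQ_2$ satisfies the conditional characterization of global $2$-$\SLE_\kappa$ in Definition~\ref{def::2SLE}; the uniqueness cited there then identifies $\QQ_2$ with the global $2$-$\SLE_\kappa$. Since the joint law of $(\eta_1,\eta_2)$ is determined by the marginal of $\eta_1$ together with the conditional law of $\eta_2$ given $\eta_1$, it suffices to check (i) that the marginal of $\gamma_1$ under $\QQ_2$ coincides with the known marginal of $\eta_1$ in the global $2$-SLE (a hypergeometric $\SLE_\kappa$ from $x_2$ to $x_1$ with marked points $x_3,x_4$), and (ii) that under $\QQ_2$ the conditional law of $\gamma_2$ given $\gamma_1$ is chordal $\SLE_\kappa$ in $\LC_1$ from $y_1$ to $y_2$.

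For (i), the marginal of $\gamma_1$ under $\QQ_2$ is by construction the Loewner chain associated with the integrated partition function
\[
\widetilde{\PartF}(y_0;y_1,y_2) \;\propto\; \int_{y_2}^{\infty} r(u)\,\PartF^{(2,2,-4)}(y_0;y_1,y_2,u)\,\ud u,
\]
where $\PartF^{(2,2,-4)}$ is the $\SLE_\kappa(2,2,-4)$ partition function from~\eqref{eqn::partf_slekapparho}. An explicit change of variable (for instance $(u-y_2)/(u-y_0)$) together with the Beta-function identities~\eqref{eqn::constC_Beta} identifies $\widetilde{\PartF}$, up to constants, with the hypergeometric partition function governing the marginal of $\eta_1$ in the global $2$-SLE.

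For (ii), I would first compute the conditional density of $u$ given $\gamma_1[0,t]$ for $t<\tau_1$ via Bayes' rule: it is proportional to $r(u)$ times the Loewner martingale observable $M_t$ from~\eqref{eqn::mart_general} attached to $\PartF^{(2,2,-4)}$, normalized by the observable attached to $\widetilde{\PartF}$. Passing to the limit $t\to\tau_1$ using Lemma~\ref{lem::technical} yields an explicit density $\tilde r(u\mid\gamma_1)$ on the arc of $\partial\LC_1$ between $y_2$ and $U_1=\gamma_1(\tau_1)$. I would then invoke the standard conditioning identity for chordal $\SLE_\kappa$ with $\kappa\in(4,8)$: conditioning a chordal $\SLE_\kappa$ in $\LC_1$ from $y_1$ to $y_2$ on its first hitting of the arc between $y_2$ and $U_1$ being $u$ produces an $\SLE_\kappa(2,-4)$ from $y_1$ targeting $U_1$ with force points $(y_2,u)$, run up to the swallowing time of $u$, concatenated with an independent chordal $\SLE_\kappa$ in $\LC_2$ from $u$ to $y_2$ — this is precisely the $\gamma_2$ of Definition~\ref{def::Q2}. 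It then remains to check that $\tilde r(u\mid\gamma_1)$ agrees with the first-hitting density of the chordal $\SLE_\kappa$ in $\LC_1$ from $y_1$ to $y_2$ on that arc, again a partition-function identity that reduces to~\eqref{eqn::constC_Beta}.

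The main obstacle is the identification in step (ii): taking the limit $t\to\tau_1$ in the Bayes expression and showing that the resulting density $\tilde r(u\mid\gamma_1)$, once pulled back through a conformal map $\LC_1\to\HH$, is the $\SLE_\kappa$ hitting measure on the arc between $y_2$ and $U_1$. This is the role of the special martingale observables announced in the outline (Lemma~\ref{lem::mart1}): the observable must be tailored so that its boundary limit, controlled via Lemma~\ref{lem::technical}, factorises as $r(u)$ times the chordal $\SLE_\kappa$ partition function in $\LC_1$, with the Beta/Gamma identities of Section~\ref{sec::pre} ensuring that the numerical constants match on the nose.
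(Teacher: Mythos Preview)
Your overall strategy---verify the marginal of $\gamma_1$ and the conditional of $\gamma_2$ given $\gamma_1$, then invoke uniqueness---is exactly the paper's. The paper organizes this as Lemmas~\ref{lem::eta1underQ1} and~\ref{lem::eta21underQ1}, combined with the characterization Lemma~\ref{lem::characterisation}. The execution differs in two places. For (i), you propose to identify the integrated partition function $\widetilde{\PartF}=G(y_0,y_1,y_2)$ analytically with the hypergeometric partition function; the paper instead computes the \emph{terminal value} of the martingale $M_t(u)$ (this is Lemma~\ref{lem::mart1}, using uniform integrability and Lemma~\ref{lem::technical}), integrates in $u$, and lands directly on the Poisson-kernel-weighted form $\one_{\{\gamma\cap[y_1,y_2]=\emptyset\}}H(\LC_\gamma;y_1,y_2)^h$ of Lemma~\ref{lem::characterisation}---avoiding the hypergeometric function entirely. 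For (ii), you go through Bayes to get $\tilde r(u\mid\gamma_1)$ and then invoke a ``standard conditioning identity'' for $\SLE_\kappa$; the paper does the same computation in one pass by writing the joint expectation $\QQ_2[F_1(\gamma_1)F_2(\xi_{\gamma_1}(\gamma_2))]$, introducing the martingale $N_t^w$ (which \emph{is} your conditioning identity, proved rather than cited), and integrating out $w$ via the Beta identity~\eqref{eqn::constC_Beta}. Your route is correct but slightly more circuitous; the paper's direct joint integration keeps the constants from proliferating. The one place to be careful in your version is that ``the marginal of $\gamma_1$ is the Loewner chain associated with $\widetilde{\PartF}$'' presupposes that the mixture of $\SLE_\kappa(2,2,-4)$ over $u$ is again absolutely continuous with respect to $\SLE_\kappa$ with an explicit Radon--Nikodym derivative on the whole curve; this is exactly the uniform-integrability content of Lemma~\ref{lem::mart1}, which you should make explicit rather than assume.
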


We will prove Proposition~\ref{prop::2SLE_conditionallaw} in Section~\ref{subsec::2SLE_conditionallaw} and prove Theorem~\ref{thm::2SLE_decomposition} and Corollary~\ref{cor::2SLE_quad} in Section~\ref{subsec::2SLE_discussion}. 

\subsection{Proof of Proposition~\ref{prop::2SLE_conditionallaw}}
\label{subsec::2SLE_conditionallaw}

We will derive the marginal law of $\gamma_1$ under $\QQ_2$ in Lemma~\ref{lem::eta1underQ1}.
We will derive the conditional law of $\gamma_2$ given $\gamma_1$ under $\QQ_2$ in Lemma~\ref{lem::eta21underQ1}.
{We will recall the characterisation of global $2$-$\SLE$ (see~\cite[Proposition~6.10]{WuHyperSLE}) in Lemma~\ref{lem::characterisation}.}
These three lemmas complete the proof of Proposition~\ref{prop::2SLE_conditionallaw}. 
%{\color{red}As a consequence, we derive the conditional law of $\gamma_1$ given $\gamma_2$ in Corollary~\ref{cor::eta12}.}

From conformal invariance, we may assume $\Omega=\HH$. 
To simplify notations, we denote for $y_0<y_1<y_2$ and $u\in\R$,  
\begin{align}
f(y_1, y_2; u):=&(y_2-y_1)^{2/\kappa}\times |u-y_1|^{-4/\kappa}|u-y_2|^{-4/\kappa}; \label{eqn::integrand_21}\\
f(y_0, y_1, y_2; u):=&\prod_{0\leq i<j\leq 2}(y_{j}-y_{i})^{2/\kappa} \times\prod_{0\le j\le 2}|u-y_j|^{-4/\kappa}.\label{eqn::integrand_31}
\end{align}

Note that $f(y_1,y_2;u)$ is the partition function related to $\SLE_\kappa(2,-4)$ from $y_1$ to $\infty$ with force points $(y_2,u)$ and $f(y_0, y_1,y_2;u)$ is the partition function related to $\SLE_\kappa(2, 2,-4)$ from $y_0$ to $\infty$ with force points $(y_1,y_2,u)$.
We start with an auxiliary lemma. Recall that $\kappa\in (4,8)$ and $h=(6-\kappa)/(2\kappa)$. 
\begin{lemma}\label{lem::mart1}
Fix $y_0<y_1<y_2<u$.  
Suppose $\gamma$ is $\SLE_\kappa$ from $y_0$ to $\infty$. 
Denote by $W$ the driving function of $\gamma$ and by $(g_t, t\ge 0)$ the corresponding conformal maps.
We define $\tau$ to be the first hitting time of $\gamma$ at $(y_1, \infty)$. 
Define, for $t<\tau$, 
\begin{equation}\label{eqn::mart1}
M_t=M_t(u):=g_{t}'(y_1)^{h}g_{t}'(y_2)^{h}g_t'(u)f(W_t,g_t(y_1),g_t(y_2);g_t(u)).
\end{equation}
Then, we have the following two properties.
\begin{itemize}
\item
The process $\{M_{t\wedge\tau}\}_{t\ge 0}$ is a uniformly integrable martingale for $\gamma$. 
\begin{itemize}
\item On the event $\{\gamma(\tau)\in (u,\infty)\}$, we have 
\begin{equation}\label{eqn::mart1limit}
\lim_{t\to \tau}M_t=\xi_{\gamma}'(y_1)^{h}\xi_\gamma'(y_2)^{h}\xi_\gamma'(u)f(\xi_\gamma(y_1),\xi_\gamma(y_2);\xi_\gamma(u)), 
\end{equation}
where we denote by $\LC_{\gamma}$ the connected component of $\HH\setminus\gamma$ having $u$ on the boundary and we choose any conformal map $\xi_\gamma$ from $\LC_{\gamma}$ onto $\HH$ such that $\xi_\gamma(\gamma(\tau))=\infty$.
\item On the event $\{\gamma(\tau)\in (y_1, u)\}$, we have 
\begin{equation}\label{eqn::mart1limit2}
\lim_{t\to\tau}M_t=0.
\end{equation}
\end{itemize}
\item
Suppose $\gamma^u$ is an $\SLE_\kappa(2,2,-4)$ curve from $y_0$ to $\infty$ with force points $(y_1,y_2, u)$. The law of $\gamma$ weighted by $\left\{M_{t\wedge\tau}\right\}_{t\ge 0}$ is the same as the law of $\gamma^u$ up to the first hitting time at $(u,\infty)$.
\end{itemize}
\end{lemma}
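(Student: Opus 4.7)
The plan is to treat $M_t$ as the standard partition-function martingale for $\SLE_\kappa(2,2,-4)$, compute its almost-sure limits at $\tau$ via Lemma~\ref{lem::technical}, and finally apply Girsanov to identify the reweighted law. Direct comparison of~\eqref{eqn::integrand_31} with the $\SLE_\kappa(\rho)$ partition function~\eqref{eqn::partf_slekapparho} shows that $f(y_0, y_1, y_2; u)$ is exactly the partition function for $\SLE_\kappa(2,2,-4)$ started at $y_0$ with force points $(y_1, y_2, u)$; the associated scaling exponents are $\Delta_0=\Delta_1=\Delta_2= \rho(\rho+4-\kappa)/(4\kappa)\big|_{\rho=2}=h$ and $\Delta_3=(-4)(-\kappa)/(4\kappa)=1$. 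By the framework~\eqref{eqn::PDE_general}--\eqref{eqn::mart_general} of Section~\ref{sec::pre}, $M_t$ is therefore a local martingale for $\SLE_\kappa$ from $y_0$ on $[0,\tau)$.

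For the limits, on $\{\gamma(\tau)\in(u,\infty)\}$ the curve avoids $[y_1,u]$ before $\tau$. Applying Lemma~\ref{lem::technical} (with $\gamma(\tau)$ playing the role of its ``$u$'' and some $v\in(y_2,u)$) and then applying it a second time with $u$ in the role of its ``$y_2$'', I obtain the asymptotic equivalence of $g_t(y_1)-W_t$, $g_t(y_2)-W_t$ and $g_t(u)-W_t$ from~\eqref{eqn::ratio_estimate1}, together with the ratios of $g_t'(y_1), g_t'(y_2), g_t'(u)$ from~\eqref{eqn::ratio_estimate2}. Regrouping the factors of $M_t$ into a Poisson-kernel-like block $g_t'(y_1)^h g_t'(y_2)^h (g_t(y_2)-g_t(y_1))^{2/\kappa}$, analogous blocks involving $u$, and powers of the common vanishing scale $\rho(t):=g_t(y_1)-W_t$, the net exponent on $\rho(t)$ cancels and~\eqref{eqn::mart1limit} emerges, whose right-hand side is precisely the $\SLE_\kappa(2,-4)$ partition-function weight of $(\xi_\gamma(y_1),\xi_\gamma(y_2);\xi_\gamma(u))$ in $\LC_\gamma$. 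On $\{\gamma(\tau)\in(y_1,u)\}$, the point $y_1$ is swallowed at $\tau$; the subfactor $g_t'(y_1)^h (g_t(y_1)-W_t)^{2/\kappa}$ is itself the partition-function martingale converting $\SLE_\kappa$ into $\SLE_\kappa(2)$ with force point $y_1$. Since $y_1$ is a.s.~swallowed under $\SLE_\kappa$ but a.s.~never swallowed under $\SLE_\kappa(2)$, this subfactor tends to $0$ a.s.; the remaining factors stay bounded (or in the sub-case $\gamma(\tau)\in(y_2,u)$ contribute an additional vanishing factor of the same form at $y_2$), yielding~\eqref{eqn::mart1limit2}.

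Uniform integrability of $\{M_{t\wedge\tau}\}$ then follows from the explicit almost-sure limits combined with an $L^p$-bound for some $p>1$ on $M_{t\wedge\tau}$, which is obtained by controlling the Poisson-kernel-like subfactors via Lemma~\ref{lem::Poisson_mono} and the $g_t'(u)$-block directly from the Loewner equation. Once $\{M_{t\wedge\tau}/M_0\}\cdot \PP$ is a genuine probability measure, Girsanov's theorem gives that under it the driving function satisfies
\[\ud W_t = \sqrt{\kappa}\,\ud\tilde B_t + \kappa\,\partial_w\log f(W_t, g_t(y_1), g_t(y_2); g_t(u))\,\ud t = \sqrt{\kappa}\,\ud\tilde B_t + \left(\frac{2}{W_t-g_t(y_1)}+\frac{2}{W_t-g_t(y_2)}-\frac{4}{W_t-g_t(u)}\right)\ud t,\]
which is the driving SDE of $\SLE_\kappa(2,2,-4)$ from $y_0$ with force points $(y_1,y_2,u)$, valid up to the first hit of $(u,\infty)$. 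The main obstacle I anticipate is the detailed verification of~\eqref{eqn::mart1limit}: the individual factors in $M_t$ are indeterminate ($0$ or $\infty$) as $t\to\tau$, so the cancellations must be tracked carefully using both~\eqref{eqn::ratio_estimate1} and~\eqref{eqn::ratio_estimate2} (the latter supplying the $\xi_\gamma$-derivatives on the right-hand side); a secondary difficulty is establishing uniform integrability without circular reasoning, since the natural candidate dominator is itself the weighted measure one is attempting to construct.
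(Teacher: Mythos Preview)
Your identification of $M_t$ as the $\SLE_\kappa(2,2,-4)$ partition-function martingale, the Girsanov computation, and the general strategy for the limit~\eqref{eqn::mart1limit} on $\{\gamma(\tau)\in(u,\infty)\}$ are all correct and match the paper. There are, however, two genuine gaps.

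\textbf{Uniform integrability.} The claimed ``$L^p$-bound for some $p>1$'' is not substantiated, and it is not clear how Lemma~\ref{lem::Poisson_mono} alone could yield one: Poisson-kernel monotonicity gives pointwise (not moment) bounds, and only on three of the five factors in the natural decomposition of $M_t$. The paper avoids moment estimates entirely. It introduces stopping times $T_n$ at which $\gamma$ enters the $(1/n)$-neighborhood of $[y_1,u]$, shows that $M_{t\wedge T_n\wedge\tau}$ is \emph{pointwise bounded} (via Poisson-kernel monotonicity for three factors, monotonicity in $t$ for a fourth, and the trivial bound $Z_5\le 1$ for the last), and then observes that the resulting weighted probability measures $P_n$ are consistent as $n\to\infty$. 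Since $\gamma^u=\SLE_\kappa(2,2,-4)$ stays a positive distance from $[y_1,u]$ almost surely, the consistent family has a limit $P_*$ which is exactly the law of $\gamma^u$ up to hitting $(u,\infty)$; this simultaneously yields UI and the identification with $\gamma^u$, dissolving the circularity you flagged.

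\textbf{The zero limit~\eqref{eqn::mart1limit2}.} Your direct argument is incomplete. The claim that the subfactor $g_t'(y_1)^h(g_t(y_1)-W_t)^{2/\kappa}\to 0$ ``because $\SLE_\kappa(2)$ never swallows $y_1$'' itself presupposes uniform integrability of that subfactor; and for $\kappa\in(6,8)$ one has $h<0$, so the limit is an indeterminate $\infty\cdot 0$ that requires a genuine Bessel computation. More seriously, in the sub-case $\gamma(\tau)\in(y_2,u)$ several of the ``remaining'' cross-difference factors (e.g.\ $(g_t(y_2)-g_t(y_1))^{2/\kappa}$) also degenerate, and the assertion that they ``stay bounded'' is not justified. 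The paper sidesteps all of this: once UI and the identification with $\gamma^u$ are in hand, one simply writes
\[
\E\big[\one_{\{\gamma(\tau)\in(y_1,u)\}}M_\tau/M_0\big]=\PP\big[\gamma^u\text{ hits }(y_1,u)\text{ before }(u,\infty)\big]=0,
\]
so $M_\tau=0$ a.s.\ on that event with no pathwise analysis needed. In short, the paper reverses your order of arguments---UI first via localization, then the zero limit as a corollary---and this reordering is what makes the proof go through.
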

\begin{proof}
From~\cite[Theorem 6]{SchrammWilsonSLECoordinatechanges}, $M_t$ in~\eqref{eqn::mart1} is a local martingale for $\gamma$ and the law of $\gamma$ weighted by $\left\{M_{t\wedge\tau}\right\}_{t\ge 0}$ is the same as the law of $\gamma^u$ in the second item up to $\tau$. It remains to show that $\{M_{t\wedge\tau}\}_{t\ge 0}$ is uniformly integrable and to derive the terminal values~\eqref{eqn::mart1limit} and~\eqref{eqn::mart1limit2}. 

We decompose $M_t$ in the following way: for $t<\tau$, 
\begin{align}\label{eqn::mart_decomposition}
M_t=&
\left(\underbrace{\frac{g_t'(y_1)g_t'(y_2)}{(g_t(y_2)-g_t(y_1))^2}}_{Z_1(t)}\right)^{-1/\kappa}
\times \left(\underbrace{\frac{g_t'(y_1)g_t'(u)}{(g_t(u)-g_t(y_1))^2}}_{Z_2(t)}\right)^{2/\kappa}
\times \left(\underbrace{\frac{g_t'(y_2)g_t'(u)}{(g_t(u)-g_t(y_2))^2}}_{Z_3(t)}\right)^{2/\kappa}\notag\\
&\times\left(\underbrace{\frac{g_t'(u)^2}{g_t'(y_1)g_t'(y_2)}}_{Z_4(t)}\right)^{(\kappa-4)/(2\kappa)}
\times \left(\underbrace{\frac{(g_t(y_1)-W_t)(g_t(y_2)-W_t)}{(g_t(u)-W_t)^2}}_{Z_5(t)}\right)^{2/\kappa}. 
\end{align}

We first derive the terminal value~\eqref{eqn::mart1limit}. We suppose $\gamma(\tau)\in (u,\infty)$. In this case, $\gamma$ has a positive distance from the interval $[y_1, u]$. 
Let us treat the five terms $Z_1, Z_2, Z_3, Z_4, Z_5$ one by one. 
\begin{itemize}
\item The quantity $Z_1(t)$ is the same as the Poisson kernel for the connected component of $\HH\setminus\gamma[0,t]$ having $y_1, y_2$ on the boundary. As $t\to \tau$, it converges to the Poisson kernel $H(\LC_{\gamma}; y_1, y_2)$. We have similar situation for $Z_2$ and $Z_3$. Thus, 
\[\lim_{t\to \tau}Z_1(t)=
\frac{\xi_{\gamma}'(y_1)\xi_{\gamma}'(y_2)}{(\xi_{\gamma}(y_2)-\xi_{\gamma}(y_1))^2},\quad \lim_{t\to \tau}Z_2(t)=
\frac{\xi_{\gamma}'(y_1)\xi_{\gamma}'(u)}{(\xi_{\gamma}(u)-\xi_{\gamma}(y_1))^2},\quad \lim_{t\to \tau}Z_3(t)=
\frac{\xi_{\gamma}'(y_2)\xi_{\gamma}'(u)}{(\xi_{\gamma}(u)-\xi_{\gamma}(y_2))^2}. \]
\item 
For $Z_4(t)$, %recall that $g_t$ is the conformal map from the unbounded component of $\HH\setminus\gamma[0,t]$ onto $\HH$ normalized at $\infty$. We define $\xi_t$ to be the conformal map from the unbounded component of $\HH\setminus\gamma[0,t]$ onto $\HH$ such that $\xi_t(y_1)=0$, $\xi_t(y_2)=1$ and $\xi_t(\gamma(\tau))=\infty$:
%\[\xi_t(z):=\frac{g_t(z)-g_t(y_1)}{g_t(\gamma(\tau))-g_t(z)}\times \frac{g_t(\gamma(\tau))-g_t(y_2)}{g_t(y_2)-g_t(y_1)}.\] 
%Then, we have
%\[Z_4(t)=\frac{\xi'_t(u)^2}{\xi'_t(y_1)\xi'_t(y_2)}\times\frac{(g_t(\gamma(\tau){\color{blue})}-g_t(u))^2}{(g_t(\gamma(\tau))-g_t(y_1))(g_t(\gamma(\tau))-g_t(y_2))}.\]
%Thus, 
from Lemma~\ref{lem::technical}, we have
\begin{equation}\label{eqn::mart_decomposition_aux}
\lim_{t\to \tau}Z_4(t)
%\lim_{t\to \tau}\frac{\xi'_t(u)^2}{\xi'_t(y_1)\xi'_t(y_2)}
=\frac{\xi'_{\gamma}(u)^2}{\xi'_{\gamma}(y_1)\xi'_{\gamma}(y_2)}.
\end{equation}
\item For $Z_5$, from Lemma~\ref{lem::technical},  we have
$\lim_{t\to \tau}Z_5(t)=1$.
\end{itemize}
Combining these observations, we obtain~\eqref{eqn::mart1limit} almost surely on $\{\gamma(\tau)\in (u,\infty)\}$. 

Next, we show that $\{M_{t\wedge\tau}\}_{t\ge 0}$ is uniformly integrable.
Let us come back to the decomposition~\eqref{eqn::mart_decomposition}. 
For $n\ge 1$, denote by $T_n$ the hitting time of $\gamma$ at $(1/n)$-neighborhood of $[y_1, u]$. 
\begin{itemize}
\item For $Z_1, Z_2, Z_3$, from the monotonicity of the Poisson kernel Lemma~\ref{lem::Poisson_mono}, we have
\[Z_1(t)\le (y_2-y_1)^{-2},\quad Z_2(t)\le (u-y_1)^{-2},\quad Z_3(t)\le (u-y_2)^{-2}\quad \text{for all }t\le \tau.\]
Denote by $D_n$ the $(1/n)$-neighborhood of $[y_1, u]$ on $\overline\HH$. 
From the monotonicity of the Poisson kernel again,  for all $t\le T_n\wedge \tau$, we have
\[ Z_1(t)\ge H(D_n; y_1,y_2),\quad Z_2(t)\ge H(D_n; y_1, u),\quad Z_1(t)\ge H(D_n; y_2, u).\]
In particular, there exists a constant $C=C_n$, such that $Z_1(t),Z_2(t)$ and $Z_3(t)$ are bounded from below by $C$ for all $t\le T_n\wedge \tau$.

\item For $Z_4$,  it is increasing in $t$, because 
\[\partial_t\log Z_4(t)=\frac{-4}{(g_t(u)-W_t)^2}+\frac{2}{(g_t(y_1)-W_t)^2}+\frac{2}{(g_t(y_2)-W_t)^2}\ge 0,\]
where we use the fact that $W_t\le g_t(y_1)\le g_t(y_2)\le g_t(u)$. Thus, $ Z_4(t)$ is bounded from above by the right-hand side of~\eqref{eqn::mart_decomposition_aux} for all $t\le \tau$. Therefore, $Z_4(t)$ is bounded for all $t\le T_n\wedge \tau$. 
\item  For $Z_5$, as $W_t\le g_t(y_1)\le g_t(y_2)\le g_t(u)$, we have $Z_5(t)\le 1$ for all $t\le \tau$. 
\end{itemize}
Combining these observations, $M_t$ is bounded from above for all $t\le T_n\wedge \tau$. Denote by $P_n$ the law of $\gamma$ weighted by $M_{T_n\wedge \tau}/M_0$. Then $P_n$ is the same as the law of $\gamma^u$ up to the minimum of the first hitting time at $(1/n)$-neighborhood of $[y_1, u]$ and the first hitting time at $(u,\infty)$. As $\{P_n\}_n$ is compatible in $n$, there exists a probability measure $P_*$ such that under $P_*$, for each $n$, the curve has the law of $\gamma^u$ up to the minimum of the first hitting time at $(1/n)$-neighborhood of $[y_1, u]$  and the first hitting time at $(u,\infty)$. At the same time, we know that $\gamma^u$ has a positive distance from the interval $[y_1, u]$ almost surely. Hence $P_*$ is exactly the law of $\gamma^u$ up to the first hitting time at $(u,\infty)$. This implies that $\{M_{t\wedge\tau}\}_{t\ge 0}$ is uniformly integrable and the law of $\gamma$ weighted by $\{M_{t\wedge\tau}\}_{t\ge 0}$ is the same as the law of $\gamma^u$  up to the first hitting time at $(u,\infty)$. 

Finally, we derive~\eqref{eqn::mart1limit2}. As $\{M_{t\wedge\tau}\}_{t\ge 0}$ is uniformly integrable and the law of $\gamma$ weighted by $\{M_{t\wedge\tau}\}_{t\ge 0}$ is the same as the law of $\gamma^u$  up to the first hitting time at $(u,\infty)$, we have 
\[\E\left[\one_{\{\gamma(\tau)\in (y_1, u)\}}M_{\tau}/M_0\right]=\PP[\gamma^u \text{ hits }(y_1, u) \text{ before hitting }(u,\infty)]=0.\]
This gives~\eqref{eqn::mart1limit2} and completes the proof.
\end{proof}

Next, we will use Lemma~\ref{lem::mart1} to derive the law of $\gamma_1$ under $\QQ_2$. Recall that we assume $\Omega=\HH$ and suppose $x_1<x_2<x_3<x_4$.  We fix any M\"obius transformation $\psi$ such that $\psi(x_1)=\infty$.
Recall that we denote $y_0=\psi(x_2), y_1=\psi(x_3), y_2=\psi(x_4)$. To simplify notations, we denote 
\begin{equation}\label{eqn::notation_simplified_G}
G=G(y_0, y_1, y_2)=\int_{y_2}^{\infty}f(y_0,y_1, y_2; u)\ud u. 
\end{equation}
\begin{lemma}\label{lem::eta1underQ1}
Suppose $\gamma$ is $\SLE_\kappa$ from $y_0$ to $\infty$. On the event $\{\gamma\cap[y_1,y_2]=\emptyset\}$, we denote by $\LC_{\gamma}$ the connected component of $\HH\setminus\gamma$ having $y_1, y_2$ on the boundary. Then the law of $\gamma_1$ under $\QQ_2$ is the same as the law of $\gamma$ weighted by the Poisson kernel to the power $h$: 
\begin{equation}\label{eqn::eta1Q1}
\frac{C_\kappa}{G} \one_{\{\gamma\cap[y_1, y_2]=\emptyset\}}\xi_{\gamma}'(y_1)^{h}\xi_{\gamma}'(y_2)^{h}(\xi_{\gamma}(y_2)-\xi_{\gamma}(y_1))^{-2h}
\end{equation}
for any conformal map $\xi_{\gamma}$ from $\LC_{\gamma}$ onto $\HH$ with $\xi_{\gamma}(y_1)<\xi_{\gamma}(y_2)$. 
\end{lemma}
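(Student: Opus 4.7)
The plan is to integrate out the random force point $u$ from Definition~\ref{def::Q2}. Given $u$, Lemma~\ref{lem::mart1} already identifies the conditional law of $\gamma_1$ stopped at $\tau_1$ as the chordal $\SLE_\kappa$ curve $\gamma$ stopped at $\tau$ and reweighted by $M_\tau(u)/M_0(u)$. Since $M_0(u)=f(y_0,y_1,y_2;u)$, comparing~\eqref{eqn::Q2density} with~\eqref{eqn::notation_simplified_G} shows that $r(u)/M_0(u)=1/G$ is a constant independent of $u$. Therefore, for any bounded measurable test function $F$, I would use Fubini to write
\begin{equation*}
\E_{\QQ_2}[F(\gamma_1[0,\tau_1])] = \frac{1}{G}\,\E_\gamma\!\left[F(\gamma[0,\tau])\int_{y_2}^{\infty} M_\tau(u)\,\ud u\right],
\end{equation*}
reducing the problem to evaluating the inner integral.

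\textbf{Evaluating the inner integral.} By Lemma~\ref{lem::mart1}, $M_\tau(u)=0$ whenever $\gamma(\tau)\in(y_1,u)$. If $\gamma$ hits $[y_1,y_2]$ this forces $\gamma(\tau)\in(y_1,u)$ for every $u>y_2$ and the integral vanishes; otherwise $\gamma(\tau)=:U^\ast\in(y_2,\infty)$, the integrand is supported on $(y_2,U^\ast)$, and~\eqref{eqn::mart1limit} gives an explicit formula there. Changing variables $v=\xi_\gamma(u)$ and using $\xi_\gamma(U^\ast)=\infty$, I would rewrite the integral as
\begin{equation*}
\xi_\gamma'(y_1)^h\,\xi_\gamma'(y_2)^h\,(b-a)^{2/\kappa}\int_b^\infty(v-a)^{-4/\kappa}(v-b)^{-4/\kappa}\,\ud v, \qquad a:=\xi_\gamma(y_1),\;b:=\xi_\gamma(y_2).
\end{equation*}
A standard substitution $v=b+s(b-a)$ identifies the remaining $v$-integral as $(b-a)^{1-8/\kappa}B(8/\kappa-1,1-4/\kappa)$. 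Using~\eqref{eqn::constC_Beta} and the identity $1-6/\kappa=-2h$, the whole expression simplifies to $C_\kappa\,\xi_\gamma'(y_1)^h\xi_\gamma'(y_2)^h(\xi_\gamma(y_2)-\xi_\gamma(y_1))^{-2h}$. Dividing by $G$ recovers exactly the weight~\eqref{eqn::eta1Q1} applied to $\gamma[0,\tau]$.

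\textbf{From stopped to full curve, and main obstacle.} To lift the identification from stopped to full curves, I would observe that on $\{\gamma\cap[y_1,y_2]=\emptyset\}$ the continuation $\gamma[\tau,\infty)$ lives in the unbounded component of $\HH\setminus\gamma[0,\tau]$, which is disjoint from the component adjacent to $[y_1,y_2]$. Hence $\LC_\gamma$ coincides with that latter component, and the weight in~\eqref{eqn::eta1Q1} is in fact $\sigma(\gamma[0,\tau])$-measurable. Under $\QQ_2$, all three force points $(y_1,y_2,u)$ are swallowed simultaneously at $\tau_1$, and since $2+2-4=0$ the drift of $\SLE_\kappa(2,2,-4)$ vanishes after $\tau_1$; so $\gamma_1[\tau_1,\infty)$ is ordinary $\SLE_\kappa$ in the unbounded component of $\HH\setminus\gamma_1[0,\tau_1]$, matching the $\SLE_\kappa$ continuation of $\gamma$ under the weighted measure. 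The main computational challenge is the Beta-integral evaluation together with the exponent identity $1-6/\kappa=-2h$; the main conceptual point is justifying that $\LC_\gamma$ is determined by $\gamma[0,\tau]$, which is settled by the topological observation above.
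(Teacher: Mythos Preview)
Your proof is correct and follows essentially the same route as the paper: integrate out the random force point $u$ via Lemma~\ref{lem::mart1}, note that $r(u)/M_0(u)=1/G$, apply Fubini, and reduce to a Beta integral by the change of variables $v=\xi_\gamma(u)$. The paper uses the substitution $s=(w-\xi_\gamma(y_2))/(w-\xi_\gamma(y_1))$ instead of your $v=b+s(b-a)$, and it handles the ``stopped to full curve'' extension in one sentence (``after hitting $(u,\infty)$, $\gamma_1$ evolves as standard $\SLE_\kappa$'') rather than spelling out, as you do, that the weight is $\sigma(\gamma[0,\tau])$-measurable and that the total force $2+2-4=0$ makes the continuation ordinary $\SLE_\kappa$; but these are presentational differences, not substantive ones.
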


Note that~\eqref{eqn::eta1Q1} agrees with the partition function of $\SLE_\kappa$ from $\xi_\gamma(y_1)$ to $\xi_\gamma(y_2)$. Thus,  the random curve $(\gamma_1,\gamma_2)$ under $\QQ_2$ can be sampled as follows informally: we first sample $\SLE_\kappa$ from $y_0$ to $\infty$ weighted by the partition function of $\SLE_\kappa$ from $y_1$ to $y_2$ in the remaining domain. Then, we sample the $\SLE_\kappa$ from $y_1$ to $y_2$ in the remaining domain. This implies that Proposition~\ref{prop::2SLE_conditionallaw} holds in the level of partition functions--the Radon-Nikodym derivative of the first curve with respect to $\SLE_\kappa$ equals the partition function of the second curve. 
\begin{proof}
We will compare the law of $\gamma_1$ under $\QQ_2$ with respect to the law of $\gamma\sim\SLE_{\kappa}$ from $y_0$ to $\infty$. 
From the construction of $\QQ_2$, the law of $\gamma_1$ given $u$ is $\SLE_{\kappa}(2,2,-4)$ from $y_0$ to $\infty$ with force points $(y_1, y_2, u)$ whose law is the same as the law of $\gamma$ weighted by the martingale~\eqref{eqn::mart1} up to the first hitting time of $(u,\infty)$. Moreover, after hitting $(u,\infty)$, the random curve $\gamma_1$ evolves as standard $\SLE_\kappa$. 
We only need to derive the law of the curves up to the hitting time of $(u,\infty)$. 
For any bounded continuous function $F$ on curve space, we have
\begin{align*}
&\QQ_2[F(\gamma_1)]=\int_{y_2}^{\infty}r(u)\ud u \E\left[F(\gamma^u)\right]=\int_{y_2}^{\infty}\frac{r(u)\ud u}{M_0(u)} \E\left[F(\gamma)M_{\tau^u}(u)\right],
\end{align*}
where $r(u)$ is defined by~\eqref{eqn::Q2density} and $M_t(u)$ is the martingale in~\eqref{eqn::mart1} and $\tau^u$
is the first hitting time of curves at $(u,\infty)$. 

We denote by $T_{\gamma}$ the hitting time of $\gamma$ at $[y_2,\infty)$ and on the event $\{\gamma\cap [y_1, y_2]=\emptyset\}$, we denote by $\xi_{\gamma}$ any conformal map from the connected component of $\HH\setminus\gamma$ having $y_1, y_2$ on the boundary onto $\HH$. Then we have
\begin{align}
&\QQ_2[F(\gamma_1)]=\int_{y_2}^{\infty}\frac{r(u)\ud u}{M_0} \E\left[F(\gamma)M_{\tau^u}(u)\right]\notag\\
=&\frac{1}{G}\int_{y_2}^{\infty}\ud u\E\left[F(\gamma)\one_{\{\gamma(T_{\gamma})\in (u,\infty)\}}\xi_\gamma'(y_1)^{h}\xi_\gamma'(y_2)^{h}\xi_\gamma'(u)f(\xi_\gamma(y_1),\xi_\gamma(y_2);\xi_\gamma(u))\right]\notag\\
=&\frac{1}{G}\E\left[F(\gamma)\one_{\{\gamma\cap [y_1, y_2]=\emptyset\}}\xi_\gamma'(y_1)^{h}\xi_\gamma'(y_2)^{h}(\xi_{\gamma}(y_2)-\xi_{\gamma}(y_1))^{2/\kappa}
\int_{y_2}^{\gamma(T_{\gamma})}\frac{\xi_\gamma'(u)\ud u}{(\xi_{\gamma}(u)-\xi_{\gamma}(y_1))^{4/\kappa}(\xi_{\gamma}(u)-\xi_{\gamma}(y_2))^{4/\kappa}}\right]\notag\\
=&\frac{1}{G}\E\left[F(\gamma)\one_{\{\gamma\cap [y_1, y_2]=\emptyset\}}\xi_\gamma'(y_1)^{h}\xi_\gamma'(y_2)^{h}(\xi_{\gamma}(y_2)-\xi_{\gamma}(y_1))^{2/\kappa}
\int_{\xi_{\gamma}(y_2)}^{\infty}\frac{\ud w}{(w-\xi_{\gamma}(y_1))^{4/\kappa}(w-\xi_{\gamma}(y_2))^{4/\kappa}}\right]\notag\\
=&\frac{1}{G}\E\left[F(\gamma)\one_{\{\gamma\cap [y_1, y_2]=\emptyset\}}\xi_\gamma'(y_1)^{h}\xi_\gamma'(y_2)^{h}(\xi_{\gamma}(y_2)-\xi_{\gamma}(y_1))^{-2h}
\int_{0}^{1}\frac{\ud s}{s^{4/\kappa}(1-s)^{2-8/\kappa}}\right]\tag{set $s=\frac{w-\xi_{\gamma}(y_2)}{w-\xi_{\gamma}(y_1)}$}\\
=&\frac{C_{\kappa}}{G}\E[F(\gamma)\one_{\{\gamma\cap[y_1,y_2]=\emptyset\}}\xi_\gamma'(y_1)^{h}\xi_\gamma'(y_2)^{h}(\xi_\gamma(y_2)-\xi_\gamma(y_1))^{-2h}]. \tag{due to~\eqref{eqn::constC_Beta}}
\end{align}
 This completes the proof.
\end{proof}
Next, we will derive the conditional law of $\gamma_2$ given $\gamma_1$ under $\QQ_2$.

\begin{lemma}\label{lem::eta21underQ1}
Under $\QQ_2$, the conditional law of $\gamma_2$ given $\gamma_1$ equals the $\SLE_\kappa$ from $y_1$ to $y_2$ in the connected component of $\HH\setminus\gamma_1$ having $y_1, y_2$ on the boundary.
\end{lemma}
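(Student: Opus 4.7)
The plan is to mirror the proof of Lemma~\ref{lem::eta1underQ1}, now integrating out $u$ while keeping $\gamma_1$ fixed: first identify the conditional law of $u$ given $\gamma_1$ via Bayes' rule, then represent the conditional law of $\gamma_2[0,\tau_2]$ given $(\gamma_1,u)$ as a weighted chordal $\SLE_\kappa$ via the partition-function framework of Section~\ref{sec::pre}, and finally integrate out $u$ and recognize the result as $\SLE_\kappa$ from $y_1$ to $y_2$ in $\LC_1$ stopped at its first boundary hit on the $y_2$-side, then invoke the domain Markov property.

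For the first step, combining the martingale representation in Lemma~\ref{lem::mart1} (density of $\gamma_1$ given $u$ with respect to $\SLE_\kappa$ from $y_0$ to $\infty$) with the marginal density of $\gamma_1$ from Lemma~\ref{lem::eta1underQ1}, one computes that, on $\{\gamma_1\cap[y_1,y_2]=\emptyset\}$, the image $w=\xi_{\gamma_1}(u)$ has conditional density on $(\xi_{\gamma_1}(y_2),\infty)$ proportional to $f(\xi_{\gamma_1}(y_1),\xi_{\gamma_1}(y_2);w)$, where $\xi_{\gamma_1}$ is a conformal map $\LC_1\to\HH$ with $\xi_{\gamma_1}(\gamma_1(\tau_1))=\infty$. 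This is the same shape of density as in the construction of $\QQ_2$, but one level lower, involving only two marked points instead of three, which is precisely the shape needed for the Beta-integral identity~\eqref{eqn::constC_Beta} to produce the $\SLE_\kappa$ partition function from $y_1$ to $y_2$.

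For the second step, working in the image domain under $\xi_{\gamma_1}$, the conditional law of $\xi_{\gamma_1}(\gamma_2[0,\tau_2])$ given $(\gamma_1,u)$ is $\SLE_\kappa(2,-4)$ in $\HH$ from $\xi_{\gamma_1}(y_1)$ to $\infty$ with force points $(\xi_{\gamma_1}(y_2),\xi_{\gamma_1}(u))$, stopped at its first hit of $(\xi_{\gamma_1}(y_2),\infty)$; by the general formalism of~\eqref{eqn::partf_slekapparho} and~\eqref{eqn::mart_general}, this is $\SLE_\kappa$ weighted by a martingale built from $f$ whose terminal value is analyzed exactly as in Lemma~\ref{lem::mart1}, but with the two force points $(y_2,u)$ in place of $(y_1,y_2,u)$. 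Integrating $w=\xi_{\gamma_1}(u)$ against the density from the first step then performs the same change-of-variable Beta computation as in Lemma~\ref{lem::eta1underQ1}, producing a Radon-Nikodym derivative with respect to $\SLE_\kappa$ from $\xi_{\gamma_1}(y_1)$ to $\infty$ proportional to $(\xi_{\gamma_1}(y_2)-\xi_{\gamma_1}(y_1))^{-2h}$ times the appropriate conformal derivatives, i.e.\ the $\SLE_\kappa$ partition function from $\xi_{\gamma_1}(y_1)$ to $\xi_{\gamma_1}(y_2)$. By conformal covariance this is precisely $\SLE_\kappa$ from $y_1$ to $y_2$ in $\LC_1$ up to its first hit of the image of $(y_2,\infty)$.

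Finally, the prescribed continuation of $\gamma_2$ past $\tau_2$ as an independent $\SLE_\kappa$ from $u$ to $y_2$ in $\LC_2$ is exactly what the domain Markov property of $\SLE_\kappa$ from $y_1$ to $y_2$ in $\LC_1$ produces after its first touch of the $y_2$-side boundary at $u$, so the two pieces concatenate into the full $\SLE_\kappa$ from $y_1$ to $y_2$ in $\LC_1$. The main technical hurdle is the integration step: one must verify uniform integrability and identify the terminal value of the $(2,-4)$-martingale on the event that the curve hits the image of $(y_2,\infty)$ (paralleling the $Z_1$--$Z_5$ decomposition in the proof of Lemma~\ref{lem::mart1}), and carefully track the conformal factors between $\LC_1$ and $\HH$ so that~\eqref{eqn::constC_Beta} produces cleanly the $\SLE_\kappa$ partition function between $y_1$ and $y_2$.
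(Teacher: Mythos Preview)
Your proposal is correct and follows essentially the same route as the paper: integrate out $u$ using the Beta-integral identity~\eqref{eqn::constC_Beta} so that the $\SLE_\kappa(2,-4)$ law for $\gamma_2$ given $(\gamma_1,u)$ collapses to chordal $\SLE_\kappa$ from $y_1$ to $y_2$ in $\LC_1$, then concatenate past $\tau_2$ by domain Markov. The paper's execution differs only in packaging: it does not separate out a Bayes step for the conditional density of $u$ given $\gamma_1$, but instead computes the joint expectation $\QQ_2[F_1(\gamma_1)F_2(\xi_{\gamma_1}(\gamma_2))]$ directly; it compares $\SLE_\kappa(2,-4)$ to $\SLE_\kappa$ from $\hat y_1$ \emph{to} $\hat y_2$ (i.e.\ $\SLE_\kappa(\kappa-6)$ toward $\infty$) rather than to $\SLE_\kappa$ toward $\infty$, which absorbs the factor $(\hat y_2-\hat y_1)^{-2h}$ from the outset; and it avoids proving full uniform integrability of the $(2,-4)$-martingale by working at the stopping times $T_n$ (first hit of the $1/n$-neighborhood of $[\hat y_2,\infty)$), performing the Beta integral at $T_n$, and letting $n\to\infty$ only at the very end. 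This last point actually simplifies the ``main technical hurdle'' you flag: no terminal-value analysis in the style of the $Z_1$--$Z_5$ decomposition is needed for $\gamma_2$.
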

\begin{proof}
We have the following two observations.
\begin{itemize}
\item We compare the law of $\gamma_1$ with the law of $\gamma\sim\SLE_{\kappa}$ from $y_0$ to $\infty$ and denote by $\xi_{\gamma}$ any conformal map from the connected component of $\HH\setminus\gamma$ having $y_1, y_2$ on the boundary onto $\HH$. 
The law of $\gamma_1$ given $u$ is $\SLE_{\kappa}(2,2,-4)$ from $y_0$ to $\infty$ with force points $(y_1, y_2, u)$ which is the same as the law of $\gamma$ weighted by the martingale~\eqref{eqn::mart1}. After hitting $(u,\infty)$, the random curve $\gamma_1$ evolves as standard $\SLE_\kappa$.
\item Denote by $\xi_{\gamma_1}$ any conformal map from the connected component of $\HH\setminus\gamma_1$ having $y_1, y_2$ on the boundary onto $\HH$. By the third item in Definition~\ref{def::Q2}, the conditional law of $\xi_{\gamma_1}(\gamma_2)$ given $\gamma_1$ and $u$ equals $\SLE_{\kappa}(2,-4)$ starting from $\xi_{\gamma_1}(y_1)$ with force points $(\xi_{\gamma_1}(y_2), \xi_{\gamma_1}(u))$. 
\end{itemize}
Combining these two observations, for any bounded continuous functions $F_1, F_2$ on curve space, we have 
\begin{align}\label{eqn::eta21underQ1_aux}
&\QQ_2[F_1(\gamma_1)F_2(\xi_{\gamma_1}(\gamma_2))]\notag\\
=&\QQ_2\left[\QQ_2\left[F_1(\gamma_1)\QQ_2[F_2(\xi_{\gamma_1}(\gamma_2))\cond \gamma_1, u]\cond u\right]\right]\notag\\
=&\frac{1}{G}\E\left[F_1(\gamma)\one_{\{\gamma\cap[y_1,y_2]=\emptyset\}}\xi'_{\gamma}(y_1)^{h}\xi'_{\gamma}(y_2)^{h}\underbrace{\int_{\xi_{\gamma}(y_2)}^{\infty}\QQ_2[F_2(\xi_{\gamma}(\gamma_2))\cond \gamma, w]f(\xi_{\gamma}(y_1),\xi_{\gamma}(y_2);w)\ud w}_{Z}\right],
\end{align}
where $w=\xi_\gamma(u)$. Then we consider the term $\QQ_2[F_2(\xi_{\gamma}(\gamma_2))\cond \gamma, w]$ in the integral $Z$. We already see that, given $\gamma$ and $w$, the conditional law of $\xi_{\gamma}(\gamma_2)$ equals $\SLE_{\kappa}(2,-4)$ starting from $\hat{y}_1:=\xi_{\gamma}(y_1)$ with force points $(\hat{y}_2:=\xi_{\gamma}(y_2), w)$. Let us compare its law with the law of $\hat{\gamma}\sim\SLE_{\kappa}$ from $\hat{y}_1$ to $\hat{y}_2$. We parameterize $\hat\gamma$ by half-plane capacity. Denote by $(\hat W_t, t\ge 0)$ its driving function and by $(\hat g_t, t\ge 0)$ the corresponding conformal maps for $\hat{\gamma}$. 
Note that $\hat{\gamma}$ is the same as $\SLE_{\kappa}(\kappa-6)$ from $\hat{y}_1$ to $\infty$ up to the swallowing time of $\hat{y}_2$. 
Define 
\[N_t^w=(g_t(\hat{y}_2)-\hat W_t)^{2h} \hat g_t'(w)f(\hat W_t,\hat g_t(\hat{y}_2);\hat g_t(w)).\]
Then $\{N^w_t\}_{t\ge 0}$ is a local martingale for $\hat{\gamma}$ and the law of $\hat{\gamma}$ weighted by $\{N^w_t\}_{t\ge 0}$ is the same as $\SLE_{\kappa}(2,-4)$ from $\hat{y}_1$ with force points $(\hat{y}_2, w)$ up to proper stopping time. 
Denote by $T_n$ the first time that the curve hits  the $(1/n)$-neighborhood of $[\hat{y}_2, \infty)$. Then we have 
\begin{align*}
&\int_{\hat{y}_2}^{\infty}\QQ_2[F_2(\xi_{\gamma}(\gamma_2)[0,T_n])\cond \gamma, w]f(\hat{y}_1, \hat{y}_2; w)\ud w\\
=&\int_{\hat{y}_2}^{\infty}\frac{1}{N_0^w}\E[F_2(\hat{\gamma}[0,T_n])N_{T_n}^w]f(\hat{y}_1, \hat{y}_2; w)\ud w\\
=&(\hat{y}_2-\hat{y}_1)^{-2h}\E\left[F_2(\hat{\gamma}[0,T_n])(\hat g_{T_n}(\hat{y}_2)-\hat W_{T_n})^{2h+2/\kappa} \int_{\hat{y}_2}^{\infty}\frac{\hat g_{T_n}'(w)\ud w}{(\hat g_{T_n}(w)-\hat W_{T_n})^{4/\kappa}(\hat g_{T_n}(w)-\hat g_{T_n}(\hat{y}_2))^{4/\kappa}}\right]\\
=&(\hat{y}_2-\hat{y}_1)^{-2h}\E\left[F_2(\hat{\gamma}[0,T_n])(\hat g_{T_n}(\hat{y}_2)-\hat W_{T_n})^{2h+2/\kappa} \int_{\hat g_{T_n}(\hat{y}_2)}^{\infty}\frac{\ud v}{(v-\hat W_{T_n})^{4/\kappa}(v-\hat g_{T_n}(\hat{y}_2))^{4/\kappa}}\right]\\
=&(\hat{y}_2-\hat{y}_1)^{-2h}\E\left[F_2(\hat{\gamma}[0,T_n])
\int_0^1\frac{\ud s}{s^{4/\kappa}(1-s)^{2-8/\kappa}}\right]\tag{set $s=\frac{v-\hat W_{T_n}}{v-\hat g_{T_n}(\hat{y}_2)}$}\\
=&C_{\kappa}(\hat{y}_2-\hat{y}_1)^{-2h}\E\left[F_2(\hat{\gamma}[0,T_n])
\right]. \tag{due to~\eqref{eqn::constC_Beta}}
\end{align*}
Plugging into the integral $Z$ in~\eqref{eqn::eta21underQ1_aux} and letting $n\to\infty$ (recall that $\gamma_2$ is a curve stopped when it hits $[y_2, \infty)$), we have
\[Z=C_{\kappa}(\hat{y}_2-\hat{y}_1)^{-2h}\E\left[F_2(\hat{\gamma}[0,\hat{T}])
\right], \]
where $\hat{T}$ is the hitting time of $\hat{\gamma}$ at $[\hat{y}_2,\infty)$. 
Plugging into~\eqref{eqn::eta21underQ1_aux}, we have
\begin{align}
\QQ_2[F_1(\gamma_1)F_2(\xi_{\gamma_1}(\gamma_2))]&=\frac{C_{\kappa}}{G}
\E\left[F_1(\gamma)\one_{\{\gamma\cap[y_1,y_2]=\emptyset\}}\xi'_{\gamma}(y_1)^{h}\xi'_{\gamma}(y_2)^{h}(\xi_{\gamma}(y_2)-\xi_{\gamma}(y_1))^{-2h}\E\left[F_2(\hat{\gamma}[0,\hat{T}])
\right]\right]\notag\\
&=\QQ_2\left[F_1(\gamma_1)\E\left[F_2(\hat{\gamma}[0,\hat{T}])
\right]\right],\tag{due to Lemma~\ref{lem::eta1underQ1}}
\end{align}
as desired.
\end{proof}

Next, we recall a characterisation of global $2$-$\SLE$. 
\begin{lemma}{\cite[Proposition~6.10]{WuHyperSLE}}\label{lem::characterisation}
Fix $\kappa\in (0,8)$ and a quad $(\Omega; x_1, x_2, x_3, x_4)$. Suppose $(\eta_1, \eta_2)$ is global 2-$\SLE_{\kappa}$ as in Definition~\ref{def::2SLE}.  
\begin{itemize}
\item Given $\eta_1$, we denote by $\Omega^R$ the connected component of $\Omega\setminus\eta_1$ having $(x_3x_4)$ on the boundary.  The law of global 2-$\SLE_{\kappa}$ can be characterized as follows:
\begin{itemize}
\item the marginal of $\eta_1$ is $\SLE_{\kappa}$ in $\Omega$ connecting $x_1$ and $x_2$ weighted by $\one_{\{\eta_1\cap(x_3x_4)=\emptyset\}}H(\Omega^R; x_3, x_4)^h$; 
\item the conditional law of $\eta_2$ given $\eta_1$ is $\SLE_{\kappa}$ in $\Omega^R$ connecting $x_3$ and $x_4$.
\end{itemize}
\item Given $\eta_2$, we denote by $\Omega^L$ the connected component of $\Omega\setminus\eta_2$ having $(x_1x_2)$ on the boundary. The law of global $2$-$\SLE_{\kappa}$ can be characterized as follows:
\begin{itemize}
\item the marginal of $\eta_2$ is $\SLE_{\kappa}$ in $\Omega$ connecting $x_3$ and $x_4$ weighted by $\one_{\{\eta_2\cap(x_1x_2)=\emptyset\}}H(\Omega^L; x_1, x_2)^h$; 
\item the conditional law of $\eta_1$ given $\eta_2$ is $\SLE_{\kappa}$ in $\Omega^L$ connecting $x_1$ and $x_2$.
\end{itemize}
\end{itemize}
\end{lemma}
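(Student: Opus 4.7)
The plan is to prove Lemma~\ref{lem::characterisation} by constructing an explicit candidate measure and invoking the uniqueness of global 2-$\SLE_\kappa$ cited just after Definition~\ref{def::2SLE}. The second item follows from the first by interchanging the roles of the two chords, so I focus on the first. Within the first item, the statement that $\eta_2\mid\eta_1\sim\SLE_\kappa$ in $\Omega^R$ is literally part of Definition~\ref{def::2SLE}; what actually needs proof is the identification of the marginal of $\eta_1$.

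To this end, I would define a candidate measure $\widetilde\PP$ on $X_0(\Omega; x_1, x_2, x_3, x_4)$ by first sampling $\eta_1$ as $\SLE_\kappa$ in $\Omega$ from $x_1$ to $x_2$, reweighted by $\one_{\{\eta_1\cap(x_3x_4)=\emptyset\}}H(\Omega^R;x_3,x_4)^h$, and then sampling $\eta_2$ given $\eta_1$ as $\SLE_\kappa$ in $\Omega^R$ from $x_3$ to $x_4$. Lemma~\ref{lem::Poisson_mono} ensures that the weight is bounded, hence $\widetilde\PP$ is a well-defined probability measure. The only non-trivial check is that, under $\widetilde\PP$, the conditional law of $\eta_1$ given $\eta_2$ is $\SLE_\kappa$ in $\Omega^L$ from $x_1$ to $x_2$. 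Once this is in hand, $\widetilde\PP$ satisfies Definition~\ref{def::2SLE} and the cited uniqueness of global 2-$\SLE_\kappa$ forces it to coincide with the global 2-$\SLE_\kappa$, from which both bullets of the first item follow.

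I would carry out the verification by showing that the joint Radon--Nikodym density of $\widetilde\PP$ with respect to a product of two independent $\SLE_\kappa$ measures in $\Omega$ is a symmetric function of $(\eta_1, \eta_2)$. Parametrizing $\eta_1$ by half-plane capacity and applying It\^o's formula to $t\mapsto H(\Omega\setminus\eta_1[0,t]; x_3, x_4)^h$, the Poisson kernel weighting implements a Girsanov tilt that converts unweighted $\SLE_\kappa$ from $x_3$ to $x_4$ in $\Omega$ into $\SLE_\kappa$ from $x_3$ to $x_4$ in the slit domain $\Omega\setminus\eta_1[0,t]$, up to a residual factor (arising from the Brownian loop measure for $\kappa\neq 8/3$) that is manifestly symmetric in the two chords. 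This is the SLE boundary-perturbation computation and fits naturally into the partition function framework recalled in Section~\ref{sec::pre}.

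The main obstacle is the non-simple regime $\kappa\in(4,8)$: since the two curves can touch each other and the boundary arcs, one must justify that the map $\eta_1\mapsto H(\Omega^R(\eta_1); x_3, x_4)$ is continuous on the event $\{\eta_1\cap(x_3x_4)=\emptyset\}$ and that the martingale $t\mapsto H(\Omega\setminus\eta_1[0,t]; x_3, x_4)^h$ is uniformly integrable up to the closing time, so that the Girsanov argument can be pushed all the way to $\eta_1(\tau_1)=x_2$. These are precisely the technical issues resolved in~\cite{WuHyperSLE}, in the same spirit as the uniform integrability arguments carried out in the proof of Lemma~\ref{lem::mart1} of this paper.
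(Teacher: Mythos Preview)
The paper does not prove this lemma; it is quoted verbatim from~\cite[Proposition~6.10]{WuHyperSLE} and used as a black box. Your outline --- build the candidate measure, verify the other conditional law via the boundary-perturbation martingale, and invoke uniqueness --- is indeed the standard route and matches the argument in that reference.

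That said, there is a concrete mistake in your sketch. You claim that Lemma~\ref{lem::Poisson_mono} ensures the weight
\[
\one_{\{\eta_1\cap(x_3x_4)=\emptyset\}}\,H(\Omega^R;x_3,x_4)^h
\]
is bounded. Monotonicity gives $H(\Omega^R;x_3,x_4)\le H(\Omega;x_3,x_4)$, so $H^h$ is bounded above only when $h=(6-\kappa)/(2\kappa)\ge 0$, i.e.\ for $\kappa\in(0,6]$. For $\kappa\in(6,8)$ the exponent $h$ is negative, and on the event $\{\eta_1\cap(x_3x_4)=\emptyset\}$ the Poisson kernel $H(\Omega^R;x_3,x_4)$ can be arbitrarily small (whenever $\eta_1$ nearly separates $x_3$ from $x_4$), so $H^h$ is unbounded. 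Lemma~\ref{lem::Poisson_mono} therefore says nothing about well-definedness of $\widetilde\PP$ in this range. What is actually needed is that the local martingale
\[
M_t=g_t'(x_3)^{h}\,g_t'(x_4)^{h}\,(g_t(x_4)-g_t(x_3))^{-2h}
\]
for $\SLE_\kappa$ from $x_1$ to $x_2$ is a uniformly integrable martingale, so that the total mass $\E[M_\tau]=M_0$ is finite; this is the content of the analysis in~\cite{WuHyperSLE}, but it is not a consequence of monotonicity. Your final paragraph correctly flags uniform integrability as an obstacle --- you should recognise that it already enters at the stage of defining $\widetilde\PP$, not only in pushing the Girsanov tilt to the terminal time.
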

\begin{proof}[Proof of Proposition~\ref{prop::2SLE_conditionallaw}]
%Recall that we fix $\kappa\in (4,8)$ and a quad $(\Omega; x_1, x_2, x_3, x_4)$. 
%Suppose $(\eta_1, \eta_2)$ is global 2-$\SLE_{\kappa}$ in Definition~\ref{def::2SLE}. 
%Given $\eta_1$, we denote by $\Omega^R$ the connected component of $\Omega\setminus\eta_1$ having $(x_3x_4)$ on the boundary. 
%From~\cite[Proposition~6.10]{WuHyperSLE}, the law of global 2-$\SLE_{\kappa}$ can also be characterized as follows:
%\begin{itemize}
%\item the marginal of $\eta_1$ is $\SLE_{\kappa}$ in $\Omega$ connecting $x_1$ and $x_2$ weighted by $H(\Omega^R; x_3, x_4)^h$; 
%\item the conditional law of $\eta_2$ given $\eta_1$ is $\SLE_{\kappa}$ in $\Omega^R$ connecting $x_3$ and $x_4$.
%\end{itemize}
%From such characterization, 
Combining Lemmas~\ref{lem::eta1underQ1},~\ref{lem::eta21underQ1} and Lemma~\ref{lem::characterisation} together, we see that $\QQ_2$ is the same as global 2-$\SLE_{\kappa}$ as desired. 
\end{proof}
%{\color{blue}
%\begin{remark}\label{rem::exchange}
%We use the same notations as in Lemma~\ref{lem::characterisation}. 
%By this description, we can relate the law of $\eta_2$ with $\SLE_\kappa(\rho)$ process similarly when $\kappa\in (4,8)$. 
%\end{remark}
%}
%\begin{corollary}\label{cor::eta12}
%Under $\QQ_2$, the conditional law of $\gamma_1$ given $\gamma_2$ equals $\SLE_{\kappa}$ from $y_0$ to $\infty$ in the connected component of $\HH\setminus\gamma_2$ having $y_0, \infty$ on the boundary. 
%\end{corollary}
%\begin{proof}
%
%\end{proof}

\subsection{Proof of Theorem~\ref{thm::2SLE_decomposition} and Corollary~\ref{cor::2SLE_quad}}
\label{subsec::2SLE_discussion}
\begin{proof}[Proof of Theorem~\ref{thm::2SLE_decomposition} and Corollary~\ref{cor::2SLE_quad}]
The conclusion of Theorem~\ref{thm::2SLE_decomposition} follows from the construction in Definition~\ref{def::Q2} and Proposition~\ref{prop::2SLE_conditionallaw}. 
It remains to show Corollary~\ref{cor::2SLE_quad}. 
Recall that we fix $\kappa\in (4,8)$ and a quad $(\Omega; x_1, x_2, x_3, x_4)$ and suppose $(\eta_1, \eta_2)\in X_0(\Omega; x_1, x_2, x_3, x_4)$ is a global $2$-$\SLE_{\kappa}$ such that $\eta_1$ goes from $x_2$ to $x_1$ and $\eta_2$ goes from $x_3$ to $x_4$. Let $\tau_2$ be the first time that $\eta_2$ hits the boundary arc $(x_4x_1)$. Suppose $\psi$ is a conformal map from $\Omega$ onto $\HH$ such that $\psi(x_1)=\infty$. 
Define 
\[\xi(w)=\int_{\psi(x_2)}^{w}\prod_{j=2}^4(z-\psi(x_j))^{-4/\kappa}\ud z, \quad w\in\HH.\]
This is the Schwarz-Christoffel formula that gives the conformal map from $\HH$ onto a quadrilateral such that $(\psi(x_1), \psi(x_2), \psi(x_3), \psi(x_4))$ are mapped to the four corners of the quadrilateral with interior angles given by~\eqref{eqn::2SLE_quad_angles}. Thus, $\phi=\xi\circ\psi$ is the conformal map from $\Omega$ onto a quadrilateral with four corners $(\phi(x_1), \phi(x_2), \phi(x_3), \phi(x_4))$ with interior angles given by~\eqref{eqn::2SLE_quad_angles}. As the law of $\psi(\eta_2(\tau_2))$ is given by the density $r(u)$ in~\eqref{eqn::2SLE_hittingpoint_density} and $\xi'(u)$ is proportional to $r(u)$, for any bounded continuous function $F$, we have 
\begin{align*}
\E[F(\phi(\eta_2(\tau_2)))]=\E[F(\xi(\psi(\eta_2(\tau_2))))]=\int_{\psi(x_4)}^{\infty}F(\xi(u))r(u)\ud u=\frac{1}{|(\phi(x_4)\phi(x_1))|}\int_{\phi(x_4)}^{\phi(x_1)}F(z)\ud z. 
\end{align*}
This implies that $\phi(\eta_2(\tau_2))$ is uniform on $(\phi(x_4)\phi(x_1))$ as desired. 
\end{proof}

Let us consider Corollary~\ref{cor::2SLE_quad} with three specific values of $\kappa$. 
\begin{itemize}
\item When $\kappa\to 8$, the resulting quadrilateral in Corollary~\ref{cor::2SLE_quad} is a rectangle, see Figure~\ref{fig::2SLE_quad}(b). In this case, the conclusion in Corollary~\ref{cor::2SLE_quad} coincides with the one in~\cite[Theorem~1.6]{HanLiuWuUST} where the authors derive the law of the first hitting point of one curve in a pair of $\SLE_8$ in the setup of uniform spanning tree in rectangles with alternating boundary conditions. 
\item When $\kappa=6$, the resulting quadrilateral in Corollary~\ref{cor::2SLE_quad} becomes an equilateral triangle with three corners $(\phi(x_2), \phi(x_3), \phi(x_4))$ and $\phi(x_1)$ is a point on $(\phi(x_4)\phi(x_2))$, see Figure~\ref{fig::2SLE_quad}(c). In this case, the conclusion in Corollary~\ref{cor::2SLE_quad} coincides with known result about $\SLE_6$.  Suppose $\eta$ is an $\SLE_6$ in the equilateral triangle with three corners $(\phi(x_2), \phi(x_3), \phi(x_4))$ starting from $\phi(x_3)$ to $\phi(x_4)$ and let $\tau$ be the first time that $\eta$ hits the boundary arc $(\phi(x_4)\phi(x_2))$. 
\begin{itemize}
\item On the one hand, it is known that $\eta(\tau)$ is uniform on $(\phi(x_4)\phi(x_2))$.  
\item On the other hand, it is known that the marginal law of $\eta_2$ in the pair $(\eta_1, \eta_2)$ is the same as $\SLE_6$ conditioned not to hit the arc $(x_1x_2)$, see Lemma~\ref{lem::characterisation} (in this case, we have $h=0$). In other words, $\phi(\eta_2)$ has the same law as $\eta$ conditioned not to hit $(\phi(x_1)\phi(x_2))$.
\end{itemize}
Combining these two observations, we see that the law of $\phi(\eta_2(\tau))$ is uniform in $(\phi(x_4)\phi(x_1))$. This is the same as the conclusion in Corollary~\ref{cor::2SLE_quad} with $\kappa=6$. 
\item When $\kappa\to 4$, the quadrilateral degenerates. This is consistent with the known results for $\kappa=4$. In this case, global $2$-$\SLE_4$ is a pair of simple curves $(\eta_1, \eta_2)$ such that $\eta_1$ and $\eta_2$ only touch the boundary at the endpoints. In particular, the hitting point $\eta_2(\tau)$ coincides with the point $x_4$. In this case, the pair $(\eta_1, \eta_2)$ can be coupled with GFF as level lines\footnote{See introduction for level lines of GFF in e.g.~\cite{SchrammSheffieldContinuumGFF} and~\cite{WangWuLevellinesGFFI}.}, this can be viewed as a degenerate situation in Theorem~\ref{thm::2SLE_decomposition}. We fix $\lambda=\pi/2$. Suppose $\Gamma$ is a GFF in $\Omega$ with the boundary data: 
\[-\lambda\text{ on }(x_1x_2), \quad \lambda\text{ on }(x_2x_3),\quad 3\lambda\text{ on }(x_3x_4), \quad \lambda\text{ on }(x_4x_1). \]
Let $\eta_1$ be the level line of $\Gamma$ from $x_2$ to $x_1$ and let $\eta_2$ be the level line of $\Gamma-2\lambda$ from $x_3$ to $x_4$. Then $(\eta_1, \eta_2)$ has the law of global $2$-$\SLE_4$. 
\end{itemize}

We end this section with a technical lemma for global $2$-SLE that will be useful in Section~\ref{sec::FKIsing_mono}. 
\begin{lemma} \label{lem::FK_mono_aux1}
Fix $\kappa\in (4,8)$. 
Suppose that $(\eta_1,\eta_2)\in X_0(\Omega;x_1,x_2,x_3,x_4)$ is a global $2$-$\mathrm{SLE}_{\kappa}$ such that $\eta_1$ goes from $x_2$ to $x_1$ and $\eta_2$ goes from $x_3$ to $x_4$. Then we have
\begin{equation}\label{eqn::lem::FK_mono_aux1}
	\mathbb{Q}_2\left[\left(\eta_1\cap\eta_2 \cap \partial \Omega\right)\neq \emptyset\right]=0.
\end{equation}
\end{lemma}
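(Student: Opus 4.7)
The plan is to partition $\partial\Omega$ into the four corners $\{x_1,x_2,x_3,x_4\}$ and the four open arcs between them, and to rule out intersections of $\eta_1$ and $\eta_2$ on each piece. By the definition of $X_0(\Omega;x_1,x_2,x_3,x_4)$ we have $\eta_1\cap[x_3x_4]=\emptyset$ and $\eta_2\cap[x_1x_2]=\emptyset$. This immediately excludes intersections on the arcs $(x_1x_2)$ and $(x_3x_4)$, and it also excludes the four corners, since each $x_i$ lies on exactly one of the two curves (for instance $x_1,x_2\in\eta_1$ but $x_1,x_2\notin\eta_2$, and symmetrically $x_3,x_4\in\eta_2$ but $x_3,x_4\notin\eta_1$). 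The only remaining candidates are the open arcs $(x_4x_1)$ and $(x_2x_3)$.

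For $(x_4x_1)$, I would work in $\HH$ via the map $\psi$ of Theorem~\ref{thm::2SLE_decomposition} (so $\psi(x_1)=\infty$) and use the explicit construction of $\QQ_2$ from Definition~\ref{def::Q2}. Write $u=\psi(U_2)$ and $u_1=\psi(U_1)$; then $u_1>u$ strictly, because $\gamma_1=\psi(\eta_1)$ avoids $[y_1,u]$ by \cite[Lemma 15]{DubedatSLEDuality} and $\tau_1$ is the first hitting time of $(u,\infty)$. Two confinement statements then suffice. First, $\gamma_1\cap[y_2,\infty)\subseteq[u_1,\infty)$: indeed $\gamma_1\cap[y_2,u]\subseteq\gamma_1\cap[y_1,u]=\emptyset$, while at time $\tau_1$ the Loewner hull $K_{\tau_1}$ swallows the interval $(u,u_1)$, so $\gamma_1[\tau_1,\cdot]$ lives in the unbounded component of $\HH\setminus K_{\tau_1}$, whose right real-boundary is $[u_1,\infty)$. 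Second, $\gamma_2\cap[y_2,\infty)\subseteq[y_2,u]$: by the construction of $\QQ_2$, $\gamma_2[0,\tau_2]$ meets $[y_2,\infty)$ only at $u$, and the continuation $\gamma_2[\tau_2,\cdot]$ lives in $\overline{\LC_2}$, whose real-boundary is contained in $[y_1,u]$. The two subsets $[u_1,\infty)$ and $[y_2,u]$ are disjoint, so $\eta_1\cap\eta_2\cap\overline{(x_4x_1)}=\emptyset$ almost surely.

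For $(x_2x_3)$, I would invoke the cyclic relabeling symmetry built into Definition~\ref{def::2SLE}: the pair $(\eta_2,\eta_1)$ is itself a global $2$-$\SLE_\kappa$ in the relabeled quad $(\Omega;x_3,x_4,x_1,x_2)$. Applying Theorem~\ref{thm::2SLE_decomposition} to this relabeled quad and running the same confinement argument on the arc $(x_4'x_1')=(x_2x_3)$ yields $\eta_1\cap\eta_2\cap(x_2x_3)=\emptyset$ almost surely. Assembling the contributions from the corners, the two trivial arcs, and the two nontrivial arcs gives~\eqref{eqn::lem::FK_mono_aux1}.

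The main (modest) subtlety is the Loewner-hull swallowing underlying the first confinement: one must verify that once $\gamma_1$ first hits $(u,\infty)$ at $u_1$, the open interval $(u,u_1)$ is engulfed in $K_{\tau_1}$ and therefore inaccessible to $\gamma_1[\tau_1,\cdot]$. This is a standard property of Loewner chains, valid also in the non-simple regime $\kappa\in(4,8)$, and meshes naturally with \cite[Lemma 15]{DubedatSLEDuality}.
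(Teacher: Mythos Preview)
Your argument is correct and complete, but it takes a somewhat different route from the paper's own proof. Both begin the same way, ruling out $(x_1x_2)$, $(x_3x_4)$ and the four corners using the definition of $X_0$. The divergence is on the remaining two arcs.

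For $(x_4x_1)$, the paper argues that the left boundary of $\eta_2[\sigma,\tau]$ (from the last visit to $(x_2x_3)$ to the first visit to $(x_4x_1)$) touches $(x_4x_1)$ only at $U_2=\eta_2(\tau)$; since $\eta_1$ cannot cross $\eta_2$, the only candidate for $\eta_1\cap\eta_2\cap(x_4x_1)$ is that single point, and then the paper uses that conditional $\SLE_\kappa$ almost surely misses any fixed boundary point. The arc $(x_2x_3)$ is handled simultaneously by the same left-boundary observation (the only candidate there is $\eta_2(\sigma)$). Your approach instead proves a strictly stronger, essentially deterministic separation: under the construction of Definition~\ref{def::Q2}, $\gamma_1\cap[y_2,\infty)\subseteq[u_1,\infty)$ while $\gamma_2\cap[y_2,\infty)\subseteq[y_2,u]$, and $u<u_1$. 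This avoids the conditional single-point argument entirely. You then get $(x_2x_3)$ by the relabeling symmetry of Definition~\ref{def::2SLE}, which is a clean reduction. One small inaccuracy: the real boundary of $\LC_2$ is contained in $[B_2,u]$ with $B_2<y_1$ (since $\gamma_2[0,\tau_2]$ may touch $(B_1,y_1)$ on its left), not in $[y_1,u]$ as you wrote; this does not affect your conclusion because you only intersect with $[y_2,\infty)$.

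What each approach buys: the paper's proof is shorter and handles both nontrivial arcs in one stroke, at the price of invoking the ``$\SLE_\kappa$ misses a fixed point'' fact. Your proof extracts more structural information (the two curves hit disjoint subarcs of $(x_4x_1)$), leans directly on the hull-swallowing picture and Dub\'edat's lemma, and never needs the single-point avoidance property.
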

\begin{proof}
From the construction in Definition~\ref{def::Q2}, we have $\eta_1\cap (x_3x_4)=\emptyset$, which justifies that $\eta_1\cap\eta_2\cap(x_3x_4)=\emptyset$. By symmetry, we have $\eta_1\cap\eta_2\cap(x_1x_2)=\emptyset$. It remains to check $\eta_1\cap\eta_2\cap(x_2x_3)$ and $\eta_1\cap\eta_2\cap(x_4x_1)$. For $\eta_2$, let $\tau$ be the hitting time of $\eta_2$ at $(x_4x_1)$ and let $\sigma$ be the last hitting time of $\eta_2$ at $(x_1x_2)$ before $\tau$. From the construction in Definition~\ref{def::Q2}, the left boundary of $\eta_2[\sigma, \tau]$ hits $\partial\Omega$ only at $\eta_2(\sigma)$ and $\eta_2(\tau)$. Thus, the only possible point for $\eta_1\cap\eta_2\cap(x_2x_3)$ is $\eta_2(\sigma)$ and the only possible point for $\eta_1\cap\eta_2\cap(x_4x_1)$ is $\eta_2(\tau)$. As the conditional law of $\eta_1$ given $\eta_2$ is $\SLE_{\kappa}$ and $\SLE_{\kappa}$ does not hit any given boundary point except the endpoints, we have 
\[\QQ_2[\eta_1\text{ hits }\eta_2(\tau)\text{ or }\eta_2(\sigma)\cond \eta_2]=0.\]
Thus $\eta_1\cap\eta_2\cap(x_1x_2)=\emptyset$ and $\eta_1\cap\eta_2\cap(x_4x_1)=\emptyset$ almost surely. This completes the proof. 
\end{proof}

%{\color{red}
%\begin{proof}
%From the construction, $\eta_1\cap \eta_2$ could only intersect $\partial \Omega$ on $(x_2x_3)\cup (x_4x_1)$. Given $\eta_1$, we denote by $\tau $ the first hitting time of $(x_4x_1)$ by $\eta_1$ and denote by $\sigma$ the last hitting time of $(x_2x_3)$ by $\eta_1$ before $\tau$. We denote by $\mathrm{Cut}_{\eta_1}$ the set of cut-point of $\eta_1$ on $\partial\Omega$. Then we have 
%\begin{equation*}
%\left(	\eta_1\cap \eta_2 \cap \partial \Omega\right) \subseteq \left(\{\eta_1(\sigma),\eta_1(\tau)\}\cup \mathrm{Cut}_{\eta_1}\right).
%\end{equation*}
%On the one hand, $\mathbb{Q}_2\left[\mathrm{Cut}_{\eta_1}=\emptyset\right]=1$, due to the fact that chordal $\mathrm{SLE}_\kappa$ does not have cut-point on the boundary (see e.g.,~\cite[Theorem~6.1]{ZhanDuality2}) and the definition of global $2$-$\SLE_{\kappa}$. On the other hand,
%\begin{equation*}
%	\mathbb{Q}_2\left[\eta_2\cap \{\eta_1(\sigma),\eta_1(\tau)\}\neq \emptyset\right]= \mathbb{Q}_2\left[\mathbb{Q}_2\left[\eta_2\cap \{\eta_1(\sigma),\eta_1(\tau)\}\neq \emptyset | \eta_1 \right]\right]=0,
%\end{equation*}
%where we use the fact that chordal $\SLE_{\kappa}$ does not visit given points with probability $1$ and the definition of global $2$-$\SLE_{\kappa}$. Combining all these observations together, we obtain~\eqref{eqn::lem::FK_mono_aux1}, as desired. 
%\end{proof}}

\section{Proof of Proposition~\ref{prop::global2_mono}}
\label{sec::global2_mono}
The goal of this section is to prove Proposition~\ref{prop::global2_mono}. To this end, we first give an auxiliary conclusion about global 2-$\SLE_{\kappa}$ in Section~\ref{subsec::global2_mono_aux} and then prove Proposition~\ref{prop::global2_mono} in Section~\ref{subsec::global2_mono}. 

\subsection{Estimates on global 2-SLE}
\label{subsec::global2_mono_aux} 

Fix $\kappa\in (4,8)$ and $y_0<y_1<y_2$. 
Recall that $f(y_1, y_2; u)$ and $f(y_0, y_1, y_2; u)$ are defined in~\eqref{eqn::integrand_21} and~\eqref{eqn::integrand_31} and we further define: for $y_0<y_1<y_2$ and $u_1, u_2\in\R$, 
\begin{equation}\label{eqn::integrand_32}
f(y_0, y_1, y_2; u_1, u_2)=\prod_{0\leq i<j\leq 2}(y_{j}-y_{i})^{2/\kappa} \times\prod_{0\le j\le 2}|u_1-y_j|^{-4/\kappa}\times\prod_{0\le j\le 2}|u_2-y_j|^{-4/\kappa}\times |u_2-u_1|^{8/\kappa}. 
\end{equation}

We will prove the following identity, Proposition~\ref{prop::nonintersecting}, for the global $2$-SLE$_{\kappa}$ in this section. We use the same notations as in Definition~\ref{def::Q2} and denote $y_0=\psi(x_2), y_1=\psi(x_3), y_2=\psi(x_4)$ as before. 
We sample the triple $(u, \gamma_1, \gamma_2)$ as in Definition~\ref{def::Q2}, and set $U_2=u$, i.e. $U_2$ is a random point with density~\eqref{eqn::Q2density}.
% $\Gamma$ is the GFF with boundary data~\eqref{eqn::Q2boundarydata}, $\gamma_1$ is the flow line of $\Gamma$ from $y_0$ to $\infty$, and $\gamma_2$ is the flow line of $\Gamma-2\lambda$ starting from $y_1$ and stopped at the first hitting time of $(y_2,\infty)$. 
Recall that $C_{\kappa}=B(1-4/\kappa, 8/\kappa-1)$ and $G=G(y_0, y_1, y_2)$ are defined in~\eqref{eqn::constant_def}, \eqref{eqn::constC_Beta} and~\eqref{eqn::notation_simplified_G}. 

\begin{proposition}\label{prop::nonintersecting}
%We sample the triple $(U_2, \gamma_1, \gamma_2)$ as in Definition~\ref{def::Q2}. 
Define $\tau_1$ (resp. $\tau_2$) to be the first hitting time of $\gamma_1$ (resp. $\gamma_2$) at $(y_2,\infty)$ and define $\sigma_1$ (resp. $\sigma_2$) to be the last hitting time of $\gamma_1$ (resp. $\gamma_2$) at $(y_0,y_1)$ before $\tau_1$ (resp. $\tau_2$). See Figure~\ref{fig::U1U2B1B2}. 
We denote $U_1:=\gamma_1(\tau_1)$ and $B_1:=\gamma_1(\sigma_1)$ and $B_2:=\gamma_2(\sigma_2)$ and we note that $U_2=\gamma_2(\tau_2)$. 
Fix $b, u_1, u_2$ such that $y_0<b<y_1<y_2<u_2<u_1$. Then, we have
\begin{align}\label{eqn::hitting1}
&\QQ_2[\gamma_1\cap\gamma_2=\emptyset, B_1\in (y_0,b), B_2\in (b,y_1), U_2\in (y_2,u_2), U_1\in (u_1,\infty)]\notag\\
=&\frac{1}{C_{\kappa}G}\int^{u_2}_{y_2}\ud v_2\int^{\infty}_{u_1}\ud v_1 f(y_0,y_1,y_2;v_1,v_2)\times (b-y_0)^{1-4/\kappa}(y_1-b)^{1-4/\kappa}(y_2-b)^{1-4/\kappa}(v_2-b)^{8/\kappa-2}(v_1-b)^{8/\kappa-2}.
\end{align}
\end{proposition}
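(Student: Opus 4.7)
My plan is to proceed by successive conditioning and martingale reweighting, building directly on the construction in Definition~\ref{def::Q2} and on the Schwarz--Christoffel integration technique used in Lemmas~\ref{lem::mart1} and~\ref{lem::eta1underQ1}. I first observe that by the construction $\gamma_2 \subset \LC_{\gamma_1}$, so $\gamma_1 \cap \gamma_2$ can only occur on $\partial\Omega \cup \gamma_1$. Lemma~\ref{lem::FK_mono_aux1} rules out the $\partial\Omega$ part, and for the $\gamma_1$ part one checks that, conditional on $\gamma_1$, the $\SLE_{\kappa}$ curve $\gamma_2$ in $\LC_{\gamma_1}$ almost surely avoids the $\gamma_1$-portion of $\partial\LC_{\gamma_1}$ (except at the endpoints, already excluded). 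Consequently the non-intersection event is $\QQ_2$-almost sure and may be dropped. I then condition on $U_2 = v_2 \in (y_2, u_2)$ with density $r(v_2) = f(y_0, y_1, y_2; v_2)/G$, which gives $\gamma_1 \sim \SLE_{\kappa}(2,2,-4)$ with force points $(y_1, y_2, v_2)$ and $\gamma_2$ as in Definition~\ref{def::Q2}.

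Second, I handle the constraint $U_1 = V_1 \in (u_1, \infty)$ by enlarging Lemma~\ref{lem::mart1} with a second ``target'' force point at $v_1$. The natural candidate is
\[
\widetilde M_t(v_1, v_2) := g_t'(y_1)^{h}\, g_t'(y_2)^{h}\, g_t'(v_1)\, g_t'(v_2)\, f\bigl(W_t, g_t(y_1), g_t(y_2); g_t(v_1), g_t(v_2)\bigr),
\]
which is a local martingale for standard $\SLE_{\kappa}$ from $y_0$ to $\infty$ by the general partition-function theory~\eqref{eqn::PDE_general}--\eqref{eqn::mart_general} applied to the function~\eqref{eqn::integrand_32}. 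Its uniform integrability up to the first hit of $(v_1, \infty)$ follows by the decomposition strategy of Lemma~\ref{lem::mart1}: additional Poisson-kernel-type factors bounded by their boundary values, a monotone $Z_4$-type determinant, and a bounded $Z_5$-type cross-ratio. The weighted law is $\SLE_{\kappa}(2,2,-4,-4)$ with force points $(y_1, y_2, v_1, v_2)$. Integrating $\widetilde M_{\tau}/\widetilde M_0$ against $dv_1 \, dv_2$ using the Schwarz--Christoffel substitutions $w_i = \xi_\gamma(v_i)$ on the complementary component (exactly as in the proof of Lemma~\ref{lem::eta1underQ1}) converts the double integral over $(v_1, v_2)$ into an expression containing the partition function $f(y_0, y_1, y_2; v_1, v_2)$, combined with the density $r(v_2)$ and a Poisson-kernel factor for the final domain.

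The main obstacle is extracting the $b$-dependent factor
\[
\bigl[(b - y_0)(y_1 - b)(y_2 - b)\bigr]^{(\kappa - 4)/\kappa}\,\bigl[(v_1 - b)(v_2 - b)\bigr]^{-2(\kappa-4)/\kappa},
\]
which encodes the ``last-hit'' events $\{B_1 \in (y_0, b)\}$ and $\{B_2 \in (b, y_1)\}$. Such events are not directly expressible as boundary avoidance, so my strategy is to work conditionally on each curve in turn. For $\gamma_2$, conditional on $(v_2, \gamma_1)$ the curve $\gamma_2[0,\tau_2]$ is $\SLE_{\kappa}(2,-4)$ in $\LC_{1}$ from $y_1$ to $U_1$ with force points $(y_2, v_2)$; I will compute the probability that its last hit of the boundary arc corresponding to $(y_0, y_1)$ lies in $(b, y_1)$ by introducing a further martingale observable with an extra insertion at $b$, and then taking a Beta-type Schwarz--Christoffel integration (now at the boundary point $b$) analogous to the one in the proof of Lemma~\ref{lem::eta21underQ1}. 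A parallel analysis for the $\SLE_{\kappa}(2,2,-4,-4)$ reweighting of $\gamma_1$ produces the $B_1$-factor and the $(v_i - b)^{8/\kappa - 2}$ factors. The exponents $\tfrac{\kappa-4}{\kappa}$ for the $y_j$ positions and $-\tfrac{2(\kappa-4)}{\kappa}$ for the $v_j$ positions are exactly the Coulomb-gas charges one would assign at $b$ relative to the force-point charges $+2, +2, -4, -4$ of the weighted $\gamma_1$, which guides the choice of the $b$-insertion.

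Finally, collecting all factors, the prefactor $1/(C_{\kappa}G)$ should arise from the density $1/G$ (from $r(v_2)$) together with a single surviving Beta evaluation $C_\kappa^{-1}$ from the $b$-integration (with other Beta factors cancelling, as in the manipulations of Lemmas~\ref{lem::eta1underQ1} and~\ref{lem::eta21underQ1}). I expect the hardest bookkeeping step to be verifying the uniform integrability of the compounded martingales (Lemma~\ref{lem::mart1}-style monotonicity arguments will need to be refined at the additional insertion $b$) and checking that the order of limits in the Schwarz--Christoffel substitutions near $b$ yields the displayed exponents rather than anomalous ones.
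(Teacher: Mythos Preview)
Your opening claim is incorrect and invalidates the rest of the outline. You assert that $\gamma_1\cap\gamma_2=\emptyset$ holds $\QQ_2$-almost surely because, conditional on $\gamma_1$, the curve $\gamma_2$ is an $\SLE_\kappa$ in $\LC_{\gamma_1}$ and ``almost surely avoids the $\gamma_1$-portion of $\partial\LC_{\gamma_1}$''. This is false for $\kappa\in(4,8)$: chordal $\SLE_\kappa$ is boundary-touching on both arcs, so $\gamma_2$ hits the arc of $\partial\LC_{\gamma_1}$ containing $\gamma_1$ almost surely, and hits the $\gamma_1$-portion of that arc with positive probability. Lemma~\ref{lem::FK_mono_aux1} only asserts $\eta_1\cap\eta_2\cap\partial\Omega=\emptyset$ a.s., not $\eta_1\cap\eta_2=\emptyset$. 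Indeed the whole purpose of this proposition is to feed into Proposition~\ref{prop::global2_mono}, whose content is that $\QQ_2[\gamma_1\cap\gamma_2=\emptyset]$ decays like $\exp(-(2-8/\kappa)L)$; if you could drop the non-intersection event, that statement would be vacuous.

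The related misconception is your claim that the last-hit constraints $\{B_1\in(y_0,b)\}$ and $\{B_2\in(b,y_1)\}$ ``are not directly expressible as boundary avoidance''. They are, and this is precisely the mechanism the paper exploits. Conditional on $\gamma_1$, the joint event $\{\gamma_1\cap\gamma_2=\emptyset,\ B_2\in(b,y_1),\ U_2\in(y_2,u_2)\}$ is exactly the event that $\xi_{\gamma_1}(\gamma_2)$, an $\SLE_\kappa$ from $\xi_{\gamma_1}(y_1)$ to $\xi_{\gamma_1}(y_2)$, hits $(\xi_{\gamma_1}(y_2),\xi_{\gamma_1}(u_2))$ before $(-\infty,\xi_{\gamma_1}(b))\cup(\xi_{\gamma_1}(u_2),\infty)$: the half-line $(-\infty,\xi_{\gamma_1}(b))$ is the image of $(B_1,b)$ together with the $\gamma_1$-side of $\partial\LC_{\gamma_1}$, so avoiding it encodes \emph{simultaneously} $B_2>b$ and $\gamma_2\cap\gamma_1=\emptyset$. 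This hitting probability is computed explicitly in Lemma~\ref{lem::aux2} via a martingale carrying a $(\kappa-4)$-charge at $b$. On the $\gamma_1$ side, $\{B_1\in(y_0,b),\ U_1\in(u_1,\infty)\}$ is likewise the event that $\gamma_1$ first hits $(b,\infty)$ inside $(u_1,\infty)$, which the paper builds into the terminal indicator of the martingale $R_t$ in Lemma~\ref{lem::aux1} (essentially your $\widetilde M$, but augmented by the $b$-insertion and already integrated over $v_1$). Thus the $b$-dependent factors do not come from a separate Schwarz--Christoffel integration at $b$; they come from recognising the last-hit constraints as first-hitting events and placing the $(\kappa-4)$ charge at $b$ in the martingale from the outset.
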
 
\begin{figure}[ht!]
\begin{center}
\includegraphics[width=0.6\textwidth]{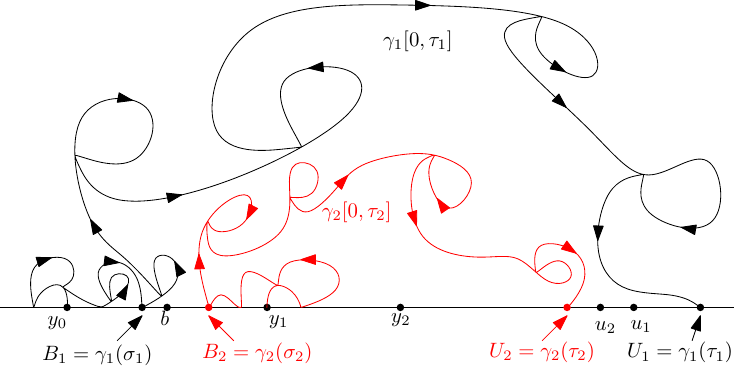}
\end{center}
\caption{\label{fig::U1U2B1B2} An illustration of Proposition~\ref{prop::nonintersecting}. The black curve is $\gamma_1$ and the red curve is $\gamma_2$. The last hitting point of $\gamma_1$ at $(y_0,y_1)$ is $B_1\in (y_0,b)$ and the last hitting point of $\gamma_2$ at $(y_0,y_1)$ is $B_2\in (b,y_1)$. The first hitting point of $\gamma_1$ at $(y_2,\infty)$ is $U_1\in (u_1,\infty)$ and the first hitting point of $\gamma_2$ at $(y_2,\infty)$ is $U_2\in (y_2,u_2)$. Moreover, we require $\gamma_1\cap\gamma_2=\emptyset$.}
\end{figure}

To prove Proposition~\ref{prop::nonintersecting}, we begin with two auxiliary lemmas.
\begin{lemma}\label{lem::aux1}
Fix $\kappa\in(4,8)$ and fix $y_0<b<y_1<y_2<v_2<u_1$. Suppose $\gamma$ is $\SLE_{\kappa}$ from $y_0$ to $\infty$. Denote by $W$ the driving function of $\gamma$ and by $(g_t, t\ge 0)$ the corresponding conformal maps. Let $\tau$ be the first hitting time of $\gamma$ at $(b,\infty)$ and define, for $t<\tau$, 
\begin{align}
R_t=R_t(v_2):=&g'_t(y_1)^hg'_t(y_2)^h(g_t(b)-W_t)^{1-4/\kappa}(g_t(y_1)-g_t(b))^{1-4/\kappa}(g_t(y_2)-g_t(b))^{1-4/\kappa} (g_t(v_2)-g_t(b))^{8/\kappa-2}\notag\\
&\times g'_t(v_2)\times\int^{\infty}_{g_t(u_1)}\ud v_1 f(W_t,g_t(y_1),g_t(y_2);g_t(v_2),v_1)\times (v_1-g_t(b))^{8/\kappa-2}.
\end{align}
We define $E=\{\gamma(\tau)\in (u_1, \infty)\}$, and on the event $E$, we define $\LC_\gamma$ to be the connected component of $\HH\setminus\gamma[0,\tau]$ having $v_2$ on the boundary. Define $\xi_\gamma$ to be the conformal map from $\LC_\gamma$ onto $\HH$ such that $\xi_\gamma(\gamma(\tau))=\infty$. Then, $\{R_{t\wedge\tau}\}_{t\ge 0}$ is a uniformly integrable martingale and 
\begin{align}\label{eqn::terminalnonintersecting}
\lim_{t\to\tau}R_t=R_\tau=&\one_E\times C_{\kappa}\xi'_\gamma(y_1)^h\xi'_\gamma(y_2)^h\xi'_\gamma(v_2)f(\xi_\gamma(y_1),\xi_\gamma(y_2);\xi_\gamma(v_2))\notag\\
&\times (\xi_\gamma(y_1)-\xi_\gamma(b))^{1-4/\kappa}(\xi_\gamma(y_2)-\xi_\gamma(b))^{1-4/\kappa}(\xi_\gamma(v_2)-\xi_\gamma(b))^{8/\kappa-2}.
\end{align}
\end{lemma}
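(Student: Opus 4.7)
The plan is to mirror the three-step structure of the proof of Lemma~\ref{lem::mart1}, with an extra Fubini step for the integration in $w$. The key observation is that after substituting $v_1 = g_t(w)$ with $w \in (u_1, \infty)$, we have $\ud v_1 = g_t'(w) \ud w$ and
\[
R_t = \int_{u_1}^{\infty} \tilde{M}_t(w) \, \ud w,
\qquad
\tilde M_t(w) := g_t'(y_1)^h g_t'(y_2)^h g_t'(v_2) g_t'(w) \cdot \mathcal{Z}\bigl(W_t, g_t(y_1), g_t(y_2), g_t(b), g_t(v_2), g_t(w)\bigr),
\]
where comparing factor by factor identifies $\mathcal{Z}$ with the $\SLE_\kappa(\rho)$ partition function~\eqref{eqn::partf_slekapparho} at force points $(y_1, y_2, b, v_2, w)$ with $\rho = (2, 2, \kappa - 4, -4, -4)$. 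The associated scaling exponents are $(\Delta_{y_j}, \Delta_b, \Delta_{v_2}, \Delta_w) = (h, 0, 1, 1)$; the crucial one is $\Delta_b = (\kappa-4)(\kappa-4+4-\kappa)/(4\kappa) = 0$, which explains the absence of any $g_t'(b)$ factor in $R_t$. Hence $\tilde M_t(w)$ is a local martingale by~\eqref{eqn::mart_general} and, by Fubini, so is $\{R_{t\wedge \tau}\}_{t \ge 0}$.

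For uniform integrability I mimic the decomposition~\eqref{eqn::mart_decomposition}: each $\tilde M_t(w)$ is a product of boundary Poisson kernels for the pairs in $\{y_1, y_2, b, v_2, w\}$, a monotone-in-$t$ derivative-ratio factor, and a boundary-ratio factor bounded by $1$. Lemma~\ref{lem::Poisson_mono} bounds each Poisson kernel from above by its extremal $\HH$-value and, when $\gamma$ is stopped at $T_n$ (the hitting time of a $(1/n)$-neighborhood of $[b, u_1]$), uniformly from below. Integrating in $w$, and using the decay $(w - g_t(b))^{8/\kappa - 2}$ at infinity (the exponent is negative), produces a uniform upper bound on $R_{t \wedge \tau \wedge T_n}$. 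The same compatibility argument as in Lemma~\ref{lem::mart1} -- now together with~\cite[Lemma~15]{DubedatSLEDuality} to guarantee that the corresponding weighted process almost surely keeps a positive distance from $[b, u_1]$ and first hits $(b, \infty)$ in $(u_1, \infty)$ -- upgrades this to uniform integrability of $\{R_{t \wedge \tau}\}_{t \ge 0}$.

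On event $E$, the points $y_1, y_2, v_2, b$ and every $w \in (u_1, \gamma(\tau))$ lie on $\partial \LC_\gamma$, so Lemma~\ref{lem::technical} applied factorwise in the Poisson-kernel decomposition gives a pointwise limit $\tilde M_\tau(w)$ equal to the natural $\xi_\gamma$-analogue of $\tilde M_t(w)$ (the extra $\xi_\gamma'(w)$ coming from the change of variable $z = \xi_\gamma(w)$). For $w \in (\gamma(\tau), \infty)$ the pointwise limit vanishes, because $\gamma[0, \tau]$ separates $w$ from $\{y_1, y_2, v_2\}$ and the Poisson kernels between $w$ and these points degenerate faster than the $g_t'(y_j)$-factors decay -- a scaling-degree count produces an overall rate $b(t)^{1-4/\kappa} \to 0$. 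Dominated convergence (from the previous step) then delivers
\[
\lim_{t \to \tau} R_t = \one_E \cdot \xi_\gamma'(y_1)^h \xi_\gamma'(y_2)^h \xi_\gamma'(v_2) \prod_{j=1}^{2}(\xi_\gamma(y_j) - \xi_\gamma(b))^{1-4/\kappa} \cdot (\xi_\gamma(v_2) - \xi_\gamma(b))^{8/\kappa - 2} \cdot \mathcal{I},
\]
where $\mathcal{I}$ is an explicit integral in $z = \xi_\gamma(w)$. A substitution of the form $s = (\xi_\gamma(v_2) - \xi_\gamma(b))/(z - \xi_\gamma(b))$, entirely analogous to those in the proofs of Lemmas~\ref{lem::eta1underQ1}--\ref{lem::eta21underQ1}, reduces $\mathcal{I}$ to $C_\kappa \cdot f(\xi_\gamma(y_1), \xi_\gamma(y_2); \xi_\gamma(v_2))$ by the identity~\eqref{eqn::constC_Beta}, producing~\eqref{eqn::terminalnonintersecting}. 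The hardest part of the argument is this last step: tracking the pointwise limits through the Poisson-kernel decomposition, verifying the vanishing contribution from $w > \gamma(\tau)$, interchanging limit and integral via the domination from the uniform-integrability step, and identifying precisely the Beta substitution that exhibits the constant $C_\kappa$.
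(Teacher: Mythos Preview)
Your first step matches the paper's: you correctly identify the force-point vector $\rho=(2,2,\kappa-4,-4,-4)$, compute $\Delta_b=0$, and write $R_t$ as an integral over $w$ of $\SLE_\kappa(\rho)$ local martingales. However, your computation of the terminal value contains a genuine gap.

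You claim that on $E$, for each $w\in(u_1,\gamma(\tau))$, the pointwise limit $\lim_{t\to\tau}\tilde M_t(w)$ is a finite ``$\xi_\gamma$-analogue'', and that dominated convergence then gives $R_\tau$ as an integral you can evaluate by a Beta substitution. In fact the pointwise limit of $\tilde M_t(w)$ is \emph{zero} for every $w\in(u_1,\infty)$, so your scheme would yield $R_\tau\equiv 0$ on $E$. To see this for $w\in(u_1,\gamma(\tau))$: as $t\to\tau$ all of $b,y_1,y_2,v_2,w$ are swallowed, and one must distinguish two scales, $A_t:=g_t(b)-W_t$ (governing the factors $|g_t(\cdot)-W_t|^{\alpha}$) and $\epsilon_t:=g_t(y_1)-g_t(b)$ (governing the remaining differences and all $g_t'$). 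By~\eqref{eqn::ratio_estimate1} one has $(g_t(y_1)-W_t)/A_t\to 1$, hence $\epsilon_t/A_t\to 0$. A direct exponent count then gives $\tilde M_t(w)\asymp (\epsilon_t/A_t)^{8/\kappa-1}\to 0$, since $8/\kappa-1>0$. Thus the mass in $R_t=\int_{u_1}^\infty \tilde M_t(w)\,\ud w$ concentrates at $w=\gamma(\tau)$ and is invisible to pointwise limits; dominated convergence cannot hold here. (Your stated justification, ``Lemma~\ref{lem::technical} applied factorwise in the Poisson-kernel decomposition'', overlooks that there is no $g_t'(b)$ factor to pair with the $b$-differences, so no clean Poisson-kernel factorisation is available.)

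The paper avoids this by a different substitution, made \emph{before} passing to the limit. One writes $R_t=M_t(v_2)\,Z_1(t)^{1-4/\kappa}\,Z_2(t)$, where $M_t(v_2)$ is the martingale of Lemma~\ref{lem::mart1}, $Z_1(t)\le 1$ is an explicit cross-ratio, and in $Z_2(t)$ one substitutes $v=(v_1-W_t)/(g_t(b)-W_t)$. On $E$ the lower integration limit tends to~$1$ by~\eqref{eqn::ratio_estimate1} and the integrand tends to $(v-1)^{8/\kappa-2}v^{-4/\kappa}$, so $Z_2(t)\to\int_1^\infty (v-1)^{8/\kappa-2}v^{-4/\kappa}\ud v=C_\kappa$ directly by dominated convergence (the bound being uniform in $t$). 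Off $E$ the lower limit tends to $\infty$, so $Z_2(t)\to 0$. The same decomposition yields the one-line uniform integrability bound $R_t\le C_\kappa M_t(v_2)$, making your proposed $T_n$-compatibility machinery unnecessary.
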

\begin{proof}
For $y_2<v_2<v_1$, we define, for $t<\tau$,
\begin{align*}
N_t(v_1,v_2):=&g'_t(y_1)^hg'_t(y_2)^hg'_t(v_1)g'_t(v_2)(g_t(b)-W_t)^{1-4/\kappa}(g_t(y_1)-g_t(b))^{1-4/\kappa}(g_t(y_2)-g_t(b))^{1-4/\kappa}\\
&\times f(W_t,g_t(y_1),g_t(y_2);g_t(v_2),g_t(v_1))\times (g_t(v_1)-g_t(b))^{8/\kappa-2}(g_t(v_2)-g_t(b))^{8/\kappa-2}.
\end{align*}
From~\cite[Theorem 6]{SchrammWilsonSLECoordinatechanges}, $N_t(v_1,v_2)$ is a local martingale for $\gamma$. 
Moreover, before hitting $[b,+\infty)$, the law of $\gamma$ weighted by $\{N_t(v_1,v_2)\}_{t\ge 0}$ equals $\SLE_\kappa(\kappa-4,2,2,-4,-4)$ from $y_0$ to $\infty$ with force points $(b,y_1,y_2,v_2,v_1)$. 
For every $M>0$, denote by $S_M:=\inf\{t\ge 0: N_t(v_1,v_2)\ge M\}$. For every $\eps>0$, denote by $\tau_\eps$ the hitting time of $\gamma$ at the $\eps$-neighbourhood of $(b,+\infty)$. Then, for every $0<t_1<t_2$ and a positive measurable function $F$ on curve space, we have 
\[\E[N_{t_2\wedge\tau_\eps\wedge S_M}F(\gamma[t_1\wedge\tau_\eps\wedge S_M])]=\E[N_{t_1\wedge\tau_\eps\wedge S_M}F(\gamma[t_1\wedge\tau_\eps\wedge S_M])].\]
By definition, we have that
\[R_t=\int^{\infty}_{u_1}  N_t(v_1, v_2)\ud v_1\]
Thus, taking integration, we have
\begin{equation}\label{eqn::integration_martingale}
\E[R_{t_2\wedge\tau_\eps\wedge S_M}F(\gamma[t_1\wedge\tau_\eps\wedge S_M])]=\E[R_{t_1\wedge\tau_\eps\wedge S_M}F(\gamma[t_1\wedge\tau_\eps\wedge S_M])].
\end{equation}
This implies that $\{R_t\}_{t\ge 0}$ is also a local martingale for $\gamma$. It remains to show that $\{R_{t\wedge\tau}\}_{t\ge 0}$ is uniformly integrable and to derive the terminal value~\eqref{eqn::terminalnonintersecting}. 

We denote by $M_t(u)$ the martingale in~\eqref{eqn::mart1}.
We decompose $R_t$ in the following way: for $t<\tau$, we have 
\begin{align}\label{eqn::decR}
R_t=&M_t(v_2)%\times\left(\underbrace{\frac{(g_t(y_1)-W_t)(g_t(y_2)-W_t)}{(g_t(v_2)-W_t)^2}}_{Z_1(t)}\right)^{2/\kappa}
\times\left(\underbrace{\frac{(g_t(y_1)-g_t(b))(g_t(y_2)-g_t(b))}{(g_t(v_2)-g_t(b))^2}}_{Z_1(t)}\right)^{1-4/\kappa}\notag\\
&\times\underbrace{\int^{\infty}_{g_t(u_1)}\ud v_1 (v_1-g_t(b))^{8/\kappa-2}(v_1-W_t)^{-4/\kappa}(g_t(b)-W_t)^{1-4/\kappa}\times \left(\frac{(v_1-g_t(v_2))^2}{(v_1-g_t(y_1))(v_1-g_t(y_2))}\right)^{4/\kappa}}_{Z_2(t)}.
\end{align}
Let us estimate the terms $Z_1(t)$ and $Z_2(t)$%, Z_3(t)
 one by one. 
\begin{itemize}
\item 
For $Z_1(t)$, we have
\begin{equation}\label{eqn::estimate23}
Z_1(t)\le 1.
\end{equation}
\item 
For $Z_2(t)$, by change of variables $v= \frac{v_1-W_t}{g_t(b)-W_t}$, we have
\begin{align}
Z_2(t)&=\int^{\infty}_{\frac{g_t(u_1)-W_t}{g_t(b)-W_t}} (v-1)^{8/\kappa-2}v^{-4/\kappa}\times \left(\frac{\left(v-\frac{g_t(v_2)-W_t}{g_t(b)-W_t}\right)^2}{\left(v-\frac{g_t(y_1)-W_t}{g_t(b)-W_t}\right)\left(v-\frac{g_t(y_2)-W_t}{g_t(b)-W_t}\right)}\right)^{4/\kappa}\ud v\label{eqn::equality}\\
&\le\int^{\infty}_{\frac{g_t(u_1)-W_t}{g_t(b)-W_t}} (v-1)^{8/\kappa-2}v^{-4/\kappa}\ud v\label{eqn::estimate4}\\
&\le \int_1^{\infty} (v-1)^{8/\kappa-2}v^{-4/\kappa}\ud v=C_{\kappa}.\label{eqn::estimate5}
\end{align}
\end{itemize}
Plugging~\eqref{eqn::estimate23} and~\eqref{eqn::estimate5} into~\eqref{eqn::decR}, we obtain 
\begin{equation}\label{eqn::aux2nonintersecting}
R_t\le C_{\kappa}M_t(v_2).
\end{equation}
This implies that $R_t$ is a uniformly integrable martingale, because $M_t(v_2)$ is a uniformly integrable martingale from  Lemma~\ref{lem::mart1}. 

It remains to derive the terminal value~\eqref{eqn::terminalnonintersecting}. To this end, we analyze the limit of $R_t$ according to the location of $\gamma(\tau)$. There are two cases.
\begin{itemize}
\item Case 1. $\gamma(\tau)\in (b,u_1)$.
In this case, %from Lemma~\ref{lem::technical}, 
by continuity, we have
\[\lim_{t\to\tau} (g_t(u_1)-W_t)=g_\tau(u_1)-W_\tau\quad\text{and}\quad \lim_{t\to\tau} (g_t(b)-W_t)=0.\]
Thus, we have \[\lim_{t\to\tau}\frac{g_t(u_1)-W_t}{g_t(b)-W_t}=\infty.\] Plugging it into
~\eqref{eqn::estimate4}, we have
\[\lim_{t\to\tau}Z_2(t)=0,\quad \text{and}\quad\lim_{t\to\tau}R_t=0.\]
\item Case 2. $\gamma(\tau)\in (u_1, \infty)$. This is the case corresponding to the event $E$.
%In this case, from Lemma~\ref{lem::technical}, we have
%\[\lim_{t\to\tau}\frac{g_t(u)-W_t}{g_t(b)-W_t}=1, \quad\text{ for }u=y_1, y_2, v_2, u_1. \]
%Plugging into $Z_1(t)$, we have 
%\begin{align*}
%\lim_{t\to \tau}Z_1(t)=1.
%\end{align*}
For $Z_1(t)$, by the same argument as the proof of~\eqref{eqn::mart_decomposition_aux}, we have 
\begin{equation}\label{eqn::Z_2(t)}
\lim_{t\to\tau}Z_1(t)=\frac{(\xi_\gamma(y_1)-\xi_\gamma(b))(\xi_\gamma(y_2)-\xi_\gamma(b))}{(\xi_\gamma(v_2)-\xi_\gamma(b))^2}.
\end{equation}
Note that the right side of~\eqref{eqn::Z_2(t)} is independent of the choice of $\xi_{\gamma}$ if $\xi_{\gamma}(\gamma(\tau))=\infty$.

For $Z_2(t)$, using the expression in~\eqref{eqn::equality} and dominated convergence theorem, we have
\begin{align*}
\lim_{t\to\tau}Z_2(t)=\int_1^{\infty}\ud v(v-1)^{8/\kappa-2}v^{-4/\kappa}=C_{\kappa}. 
\end{align*}
Combining these three observations, we have 
\begin{align*}
\lim_{t\to\tau}R_t&=M_\tau(v_2)\times\left(\frac{(\xi_\gamma(y_1)-\xi_\gamma(b))(\xi_\gamma(y_2)-\xi_\gamma(b))}{(\xi_\gamma(v_2)-\xi_\gamma(b))^2}\right)^{1-4/\kappa}\times C_{\kappa}=R_\tau. 
\end{align*}
\end{itemize}
Combining these two cases, we obtain~\eqref{eqn::terminalnonintersecting} as desired. 
\end{proof}

The following lemma is well-known, for instance, see~\cite[Proposition 5.9]{berestycki2014lectures}. 
For completeness, we provide a proof below.
\begin{lemma}\label{lem::aux2}
Fix $\kappa\in(4,8)$ and fix $b<y_1<y_2<u_2$ and denote by $\gamma\sim\PP^{y_1\to y_2}$ the $\SLE_\kappa$ from $y_1$ to $y_2$. Then, we have
\begin{align}\label{eqn::SLEBU}
&\PP^{y_1\to y_2}[\gamma\text{ hits }(y_2,u_2)\text{ before }(-\infty,b)\cup (u_2,+\infty)]\notag\\
=&\frac{1}{ C_{\kappa}}(y_1-b)^{1-4/\kappa}(y_2-b)^{1-4/\kappa}(y_2-y_1)^{6/\kappa-1}\int^{u_2}_{y_2} f(y_1,y_2;v) (v-b)^{8/\kappa-2}\ud v.
\end{align}
\end{lemma}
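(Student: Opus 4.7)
The plan is to construct a uniformly integrable martingale for standard $\SLE_\kappa$ from $y_1$ to $\infty$ whose terminal value isolates the event $E := \{\gamma\text{ hits }(y_2,u_2)\text{ before }(-\infty,b)\cup(u_2,\infty)\}$, and then translate the resulting optional-stopping identity into a statement about $\SLE_\kappa$ from $y_1$ to $y_2$ via target-change Girsanov. Concretely, recall that $\SLE_\kappa$ from $y_1$ to $y_2$ coincides with $\SLE_\kappa(\kappa-6)$ from $y_1$ with force point $y_2$, obtained from $\hat\gamma\sim\SLE_\kappa$ (with driving function $W$ and conformal maps $g_t$) from $y_1$ to $\infty$ by weighting against
\[
\tilde M_t := g_t'(y_2)^h(g_t(y_2)-W_t)^{-2h},\qquad \tilde M_0 = (y_2-y_1)^{-2h}.
\]
Let $\tau$ denote the first hitting time of $\hat\gamma$ at $(-\infty,b]\cup[y_2,\infty)$.

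For each $v\in(y_2,u_2)$, define
\begin{align*}
N_t(v) := &\; g_t'(y_2)^h\,|g_t(b)-W_t|^{1-4/\kappa}\,|g_t(y_2)-g_t(b)|^{1-4/\kappa}\\
&\times g_t'(v)\,f(W_t,g_t(y_2);g_t(v))\,(g_t(v)-g_t(b))^{8/\kappa-2}.
\end{align*}
Matching exponents against~\eqref{eqn::partf_slekapparho}, $N_t(v)$ is the partition-function martingale for $\SLE_\kappa(\kappa-4,2,-4)$ from $y_1$ with force points $(b,y_2,v)$. By Fubini (as in~\eqref{eqn::integration_martingale}), the process $R_t := \int_{y_2}^{u_2} N_t(v)\,\ud v$ is again a local martingale for $\hat\gamma$, with
\[
R_0 = (y_1-b)^{1-4/\kappa}(y_2-b)^{1-4/\kappa}\int_{y_2}^{u_2}f(y_1,y_2;v)(v-b)^{8/\kappa-2}\,\ud v.
\]

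The next step mimics the five-factor decomposition~\eqref{eqn::mart_decomposition} and Poisson-kernel bound Lemma~\ref{lem::Poisson_mono} used in the proof of Lemma~\ref{lem::aux1} to conclude that $\{R_{t\wedge\tau}\}$ is uniformly integrable, with $R_\tau=0$ on $E^c$ (by Lemma~\ref{lem::technical}, parallel to~\eqref{eqn::mart1limit2}). On $E$, write $\epsilon := g_t(y_2)-W_t > 0$, substitute $w=g_t(v)$ in the inner integral, then rescale $s=(w-W_t)/\epsilon$: the substitution converts the inner integral, in the limit $t\to\tau$, to $\epsilon^{1-8/\kappa}(W_\tau-g_\tau(b))^{8/\kappa-2}\int_1^\infty s^{-4/\kappa}(s-1)^{-4/\kappa}\,\ud s$, and the latter integral evaluates to $C_\kappa$ by the change of variable $u=1-1/s$ and~\eqref{eqn::constC_Beta}. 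The remaining prefactors in $R_t$ combine cleanly, via $2/\kappa+1-8/\kappa = -2h$ and $(1-4/\kappa)+(1-4/\kappa)+(8/\kappa-2)=0$, to give $R_\tau = g_t'(y_2)^h\,\epsilon^{-2h}\,C_\kappa = C_\kappa\,\tilde M_\tau$ on $E$.

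Optional stopping then gives $R_0 = C_\kappa\,\E[\tilde M_\tau\,\one_E]$, and Girsanov's identity $\E[\tilde M_\tau\,\one_E]=\tilde M_0\,\PP^{y_1\to y_2}(E)$ together with $1/\tilde M_0 = (y_2-y_1)^{6/\kappa-1}$ rearranges to exactly~\eqref{eqn::SLEBU}. The main delicate step is justifying the limit $R_\tau = C_\kappa\,\tilde M_\tau$ on $E$: both $g_t'(y_2)\to 0$ and $\epsilon\to 0$ as $y_2$ is pinched off at time $\tau$, while the inner integral simultaneously develops a non-integrable $(w-W_\tau)^{-8/\kappa}$ singularity, but after the rescaling $s=(w-W_t)/\epsilon$ these three competing singular behaviours cancel precisely to produce the constant $C_\kappa$ from~\eqref{eqn::constC_Beta} and the factor $\tilde M_\tau$ needed to invert the target-change Girsanov.
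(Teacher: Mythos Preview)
Your approach and the paper's use the same underlying local martingale---the one for $\SLE_\kappa(\kappa-4;2,-4)$ with force points $(b,y_2,v)$---and the same integration-in-$v$ trick. The only difference is the choice of reference measure: you write the martingale against plain $\SLE_\kappa$ from $y_1$ to $\infty$ and plan to undo the target change at the end via $\tilde M_\tau$, whereas the paper writes it directly against $\SLE_\kappa$ from $y_1$ to $y_2$ (i.e.\ against $\SLE_\kappa(\kappa-6)$). The resulting process $L_t:=R_t/(C_\kappa\tilde M_t)$ then admits, via the change of variable $u=\frac{g_t(v)-g_t(y_2)}{g_t(v)-W_t}$, the uniform bound $0\le L_t\le 1$, and a short case analysis using Lemma~\ref{lem::technical} gives $L_\tau=\one_E$. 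Optional stopping is immediate and the formula drops out.

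Your route is valid in principle, but the detour through $\tilde M$ creates a technical burden you flag but do not resolve. On $E$ (and also on the part of $E^c$ where $\hat\gamma$ hits $(u_2,\infty)$ first) the stopping time $\tau$ coincides with the swallowing time of $y_2$, so both the uniform integrability of $R_{t\wedge\tau}$ and the Girsanov identity $\E[\tilde M_\tau\one_E]=\tilde M_0\,\PP^{y_1\to y_2}[E]$ hinge on controlling the target-change martingale $\tilde M$ at that singular time. Neither the decomposition~\eqref{eqn::mart_decomposition} nor the argument for~\eqref{eqn::mart1limit2} you invoke directly handles this, since in those settings the martingale being controlled does not involve the derivative at a point that is getting swallowed. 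The paper's choice of reference measure sidesteps all of this: the $g_t'(y_2)^h$ factor in your $R_t$ and the one in $\tilde M_t$ cancel, leaving a process $L_t$ that is manifestly bounded and whose terminal value is computed by elementary limits.
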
 
\begin{proof}
We denote by $W$ the driving function of $\gamma$ and by $(g_t, t\ge 0)$ the corresponding conformal maps. Note that by~\cite[Theorem 3]{SchrammWilsonSLECoordinatechanges}, before hitting $(y_2,\infty)$, the law of $\gamma$ equals the law of $\SLE_\kappa(\kappa-6)$  from $y_1$ to $\infty$ with the marked point $y_2$. Define $\tau$ to be the hitting time of $\gamma$ at $(-\infty,b)\cup(y_2,\infty)$. For $v\in (y_2,\infty)$, we define, for $t<\tau$, 
\[N_t(v):=(W_t-g_t(b))^{1-4/\kappa}(g_t(y_2)-g_t(b))^{1-4/\kappa}(g_t(y_2)-W_t)^{6/\kappa-1}g'_t(v) f(W_t,g_t(y_2);g_t(v)) (g_t(v)-g_t(b))^{8/\kappa-2}.\]
From~\cite[Theorem 6]{SchrammWilsonSLECoordinatechanges}, $N_t(v)$ is a local martingale for $\gamma$. 
Moreover, before hitting $[b,+\infty)$, the law of $\gamma$ weighted by $\{N_t(v)\}_{t\ge 0}$ equals $\SLE_\kappa(\kappa-4; 2, -4)$ from $y_0$ to $\infty$ with force points $b$ on its left and $(y_2,v)$ on its right. 
Define
\[L_t:=\frac{1}{ C_{\kappa}}\int_{y_2}^{u_2} N_t(v)\ud v.\]
Then, similarly to~\eqref{eqn::integration_martingale}, we have that $L_t$ is also a local martingale for $\gamma$. Next, we will prove that $\{L_{t\wedge\tau}\}_{t\ge 0}$ is uniformly integrable and derive the terminal value $L_\tau$.

By a change of variables $u=\frac{g_t(v)-g_t(y_2)}{g_t(v)-W_t}$, for $t<\tau$, we have
\begin{align}\label{eqn::uniformbound}
L_t&=\left(\frac{W_t-g_t(b)}{g_t(y_2)-g_t(b)}\right)^{1-4/\kappa}\times\frac{1}{ C_{\kappa}}\int_{0}^{\frac{g_t(u_2)-g_t(y_2)}{g_t(u_2)-W_t}} \left(1-\frac{W_t-g_t(b)}{g_t(y_2)-g_t(b)}u\right)^{8/\kappa-2}u^{-4/\kappa}\ud u\\
&\le\frac{1}{ C_{\kappa}}\int_{0}^{1} \left(1-u\right)^{8/\kappa-2}u^{-4/\kappa}\ud u=1.\notag
\end{align}
Thus, $\{L_{t\wedge\tau}\}_{t\ge 0}$ is uniformly integrable.
 
Now, we derive the terminal value of $L_\tau$. To this end, we analyze the limit of $L_t$ according to the location of $\gamma(\tau)$. There are three cases. 
\begin{itemize}
\item Case 1. $\gamma$ hits $(y_2,u_2)$ before $(-\infty,b)\cup (u_2, \infty)$. In this case, from Lemma~\ref{lem::technical}, we have
\[\lim_{t\to\tau}\frac{W_t-g_t(b)}{g_t(y_2)-g_t(b)}=1, \quad \lim_{t\to\tau}\frac{g_t(u_2)-g_t(y_2)}{g_t(u_2)-W_t}=1.\]
Plugging it into~\eqref{eqn::uniformbound} and using dominated convergence theorem, we have 
\begin{align*}
\lim_{t\to\tau}L_t&=\frac{1}{C_{\kappa}}\int_{0}^{1}(1-v)^{8/\kappa-2}v^{-4/\kappa}\ud v=1. 
\end{align*}
\item Case 2. $\gamma$ hits $(u_2,\infty)$ before $(-\infty, b)$. In this case, from Lemma~\ref{lem::technical}, we have
\[\lim_{t\to\tau}\frac{W_t-g_t(b)}{g_t(y_2)-g_t(b)}=1, \quad \lim_{t\to\tau}\frac{g_t(u_2)-g_t(y_2)}{g_t(u_2)-W_t}=0.\]
Plugging it into~\eqref{eqn::uniformbound}, we have
\[\lim_{t\to\tau}L_t=0.\]
\item Case 3. $\gamma$ hits $(-\infty, b)$ before $(y_2, \infty)$. In this case, from Lemma~\ref{lem::technical}, we have
\[\lim_{t\to\tau}\frac{W_t-g_t(b)}{g_t(y_2)-g_t(b)}=0.\]
Plugging it into~\eqref{eqn::uniformbound}, we have
\[\lim_{t\to\tau}L_t=0.\]
\end{itemize}
Combining these three cases, we have
\[L_\tau=\one_{\{\gamma\text{ hits }(y_2,u_2)\text{ before }(-\infty,b)\cup (u_2,+\infty)\}}.\]
As $\{L_{t\wedge\tau}\}_{t\ge 0}$ is uniformly integrable, we have 
\[\PP^{y_1\to y_2}[\gamma\text{ hits }(y_2,u_2)\text{ before }(-\infty,b)\cup (u_2,+\infty)]=\E^{y_1\to y_2}[L_{\tau}]=L_0,\]
as desired. 
\end{proof}
\begin{proof}[Proof of Proposition~\ref{prop::nonintersecting}]
We use the same notations as in Lemma~\ref{lem::aux1}. 
Denote by $E(\gamma_1)$ the event that $\gamma_1$ first hits $(b,\infty)$ at $(u_1,\infty)$.
Note that 
\begin{align}\label{eqn::aux6nonintersecting}
&\QQ_2\left[\gamma_1\cap\gamma_2=\emptyset, B_1\in (y_0,b), B_2\in (b,y_1), U_1\in (u_1,\infty), U_2\in (y_2,u_2)\right]\notag\\
=&\QQ_2\left[\one_{E(\gamma_1)}\QQ_2\left[\gamma_2\cap\gamma_1=\emptyset, B_2\in (b,y_1), U_2\in (y_2,u_2)\cond\gamma_1\right]\right].
\end{align}
By Lemma~\ref{lem::eta21underQ1}, given $\gamma_1$, the curve $\gamma_2$ is the $\SLE_\kappa$ curve from $y_1$ to $y_2$. We denote by $\xi_{\gamma_1}$ any conformal map from the connected component of $\HH\setminus\gamma_1$ having $y_1$ on the boundary onto $\HH$ such that $\xi_{\gamma_1}(U_1)=\infty$ and we denote by $\gamma\sim\PP^{\xi_{\gamma_1}(y_1)\to\xi_{\gamma_1}(y_2)}$ the $\SLE_\kappa$ curve from $\xi_{\gamma_1}(y_1)$ to $\xi_{\gamma_1}(y_2)$. Then we have 
\begin{align}
&\QQ_2\left[\gamma_2\cap\gamma_1=\emptyset, B_2\in (b,y_1), U_2\in (y_2,u_2)\cond\gamma_1\right]\notag\\
=&\PP^{\xi_{\gamma_1}(y_1)\to\xi_{\gamma_1}(y_2)}[\gamma\text{ hits }(\xi_{\gamma_1}(y_2),\xi_{\gamma_1}(u_2))\text{ before }(-\infty,\xi_{\gamma_1}(b))\cup(\xi_{\gamma_1}(u_2),+\infty)]\notag\\
=&\frac{1}{C_{\kappa}}\frac{(\xi_{\gamma_1}(y_1)-\xi_{\gamma_1}(b))^{1-4/\kappa}(\xi_{\gamma_1}(y_2)-\xi_{\gamma_1}(b))^{1-4/\kappa}}{(\xi_{\gamma_1}(y_2)-\xi_{\gamma_1}(y_1))^{1-6/\kappa}}\notag\\
&\times\int_{\xi_{\gamma_1}(y_2)}^{\xi_{\gamma_1}(u_2)}\ud v f(\xi_{\gamma_1}(y_1), \xi_{\gamma_1}(y_2); v)(v-\xi_{\gamma_1}(b))^{8/\kappa-2}\tag{due to Lemma~\ref{lem::aux2}}\\
=&\frac{1}{C_{\kappa}}\frac{(\xi_{\gamma_1}(y_1)-\xi_{\gamma_1}(b))^{1-4/\kappa}(\xi_{\gamma_1}(y_2)-\xi_{\gamma_1}(b))^{1-4/\kappa}}{(\xi_{\gamma_1}(y_2)-\xi_{\gamma_1}(y_1))^{1-6/\kappa}}\notag\\
&\times\int^{u_2}_{y_2}\xi'_{\gamma_1}(v_2)f(\xi_{\gamma_1}(y_1),\xi_{\gamma_1}(y_2);\xi_{\gamma_1}(v_2))(\xi_{\gamma_1}(v_2)-\xi_{\gamma_1}(b))^{8/\kappa-2}\ud v_2\notag\\
=&\frac{1}{C^2_\kappa}\frac{\xi'_{\gamma_1}(y_1)^{-h}\xi'_{\gamma_1}(y_2)^{-h}}{(\xi_{\gamma_1}(y_2)-\xi_{\gamma_1}(y_1))^{1-6/\kappa}}\times \int^{u_2}_{y_2} R_\tau(v_2)\ud v_2. \tag{\text{due to~\eqref{eqn::terminalnonintersecting}}}
\end{align}
Plugging it into~\eqref{eqn::aux6nonintersecting}, we have 
\begin{align*}
&\QQ_2\left[\gamma_1\cap\gamma_2=\emptyset, B_1\in (y_0,b), B_2\in (b,y_1), U_1\in (u_1,\infty), U_2\in (y_2,u_2)\right]\notag\\
=&\frac{1}{C^2_\kappa}\QQ_2\left[\one_{E(\gamma_1)}\frac{\xi'_{\gamma_1}(y_1)^{-h}\xi'_{\gamma_1}(y_2)^{-h}}{(\xi_{\gamma_1}(y_2)-\xi_{\gamma_1}(y_1))^{1-6/\kappa}}\times \int^{u_2}_{y_2} R_\tau(v_2)\ud v_2\right].
\end{align*}
Denote by $\PP^{y_0}$ the law of the $\gamma\sim\SLE_\kappa$ from $y_0$ to $\infty$ and denote by $E(\gamma)$ the event that $\gamma$ first hits $(b,\infty)$ at $(u_1,\infty)$. 
From Lemma~\ref{lem::eta1underQ1}, $\PP^{y_0}$ weighted by the Poisson kernel to the power $h$ is the same as $\gamma_1$ under $\QQ_2$. Thus,
\begin{align*}
&\QQ_2\left[\gamma_1\cap\gamma_2=\emptyset, B_1\in (y_0,b), B_2\in (b,y_1), U_1\in (u_1,\infty), U_2\in (y_2,u_2)\right]\notag\\
=&\frac{1}{C_{\kappa}G}\E^{y_0}\left[\one_{E(\gamma)}\int^{u_2}_{y_2}R_\tau(v_2)\ud v_2\right]\notag\\
=&\frac{1}{C_{\kappa}G}\int^{u_2}_{y_2}\E^{y_0}\left[\one_{E(\gamma)}R_\tau(v_2)\right]\ud v_2\notag\\
=&\frac{1}{C_{\kappa}G}(b-y_0)^{1-4/\kappa}(y_1-b)^{1-4/\kappa}(y_2-b)^{1-4/\kappa}\notag\\
&\times\int^{u_2}_{y_2}\ud v_2\int^{\infty}_{u_1}\ud v_1 f(y_0,y_1,y_2;v_1,v_2) (v_1-b)^{8/\kappa-2}(v_2-b)^{8/\kappa-2}. \tag{due to Lemma~\ref{lem::aux1}}
\end{align*}
This completes the proof.
\end{proof}

The following asymptotics is immediate from Proposition~\ref{prop::nonintersecting} and we will use it for the proof of Proposition~\ref{prop::global2_mono}. 
\begin{corollary}\label{coro::asy}
Suppose $y_0=0$, $y_1=s$, $y_2=1$ and $b\in (0,1)$. Then, there exists a constant $C=C(\kappa)$, such that for every $b\in (0,1)$, 
\begin{equation}\label{eqn::probestimate}
\lim_{s\to 0}\frac{1}{s^{2-8/\kappa}}\QQ_2[\gamma_1\cap\gamma_2=\emptyset, B_1\in (0,bs), B_2\in (bs,s)]=C\times b^{1-4/\kappa}(1-b)^{1-4/\kappa}.
\end{equation}
\end{corollary}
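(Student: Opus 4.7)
The approach is to specialize Proposition~\ref{prop::nonintersecting} to $(y_0,y_1,y_2)=(0,s,1)$ with the role of $b$ in the proposition played by $bs$, and then extract the leading-order behavior as $s\to 0$. Differentiating the identity of Proposition~\ref{prop::nonintersecting} in $u_1$ and $u_2$ yields the joint density of $(U_1,U_2)$ on the event $E_{b,s}:=\{\gamma_1\cap\gamma_2=\emptyset,\,B_1\in(0,bs),\,B_2\in(bs,s)\}$, and integrating this density over $\{1<v_2<v_1<\infty\}$ (which, by topology, is the a.s.\ range of $(U_2,U_1)$) produces the explicit formula
\[
\QQ_2[E_{b,s}] \;=\; \frac{(bs)^{1-4/\kappa}(s-bs)^{1-4/\kappa}(1-bs)^{1-4/\kappa}}{C_{\kappa}\, G(0,s,1)}\; I(s),
\]
where $I(s) := \iint_{1<v_2<v_1} f(0,s,1;v_1,v_2)(v_2-bs)^{8/\kappa-2}(v_1-bs)^{8/\kappa-2}\,\ud v_1\ud v_2$. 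The prefactor factorizes as $s^{2-8/\kappa}\,b^{1-4/\kappa}(1-b)^{1-4/\kappa}(1-bs)^{1-4/\kappa}$, which already exhibits the desired $s^{2-8/\kappa}$ scaling and the $b^{1-4/\kappa}(1-b)^{1-4/\kappa}$ dependence. It then suffices to show that $I(s)/G(0,s,1)$ converges to a positive finite constant independent of $b$ as $s\to 0$.

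Second, both $G(0,s,1)$ and $I(s)$ carry an explicit $s^{2/\kappa}$ factor, coming from the $(y_1-y_0)^{2/\kappa}=s^{2/\kappa}$ term in the partition functions~\eqref{eqn::integrand_31}--\eqref{eqn::integrand_32}. After pulling this factor out, dominated convergence (using $v_i-s\ge v_i/2$ and $v_i-bs\ge v_i/2$ whenever $v_i\ge 1$ and $s$ lies in a small enough neighbourhood of $0$) gives
\[
\lim_{s\to 0}\frac{G(0,s,1)}{s^{2/\kappa}} \;=\; \int_1^\infty u^{-8/\kappa}(u-1)^{-4/\kappa}\,\ud u \;=:\; c_G,
\]
\[
\lim_{s\to 0}\frac{I(s)}{s^{2/\kappa}} \;=\; \iint_{1<v_2<v_1} v_1^{-2}v_2^{-2}(v_1-1)^{-4/\kappa}(v_2-1)^{-4/\kappa}(v_1-v_2)^{8/\kappa}\,\ud v_1\ud v_2 \;=:\; J.
\]
The two $s^{2/\kappa}$ factors then cancel in the ratio, yielding the claimed asymptotic~\eqref{eqn::probestimate} with the explicit constant $C = J/(C_{\kappa}\,c_G)$, which is independent of $b$.

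The main obstacle is verifying the finiteness of the limiting integral $J$: at the triple-coincidence $(v_1,v_2)=(1,1)$ three singular factors collide, and at infinity three decaying factors compete. Introducing the substitution $v_1=1+r$, $v_2=1+rt$ with $r>0,\,t\in(0,1)$, the integrand near $(1,1)$ reduces to $r\cdot r^{-4/\kappa}\cdot r^{-4/\kappa}\cdot r^{8/\kappa}\cdot t^{-4/\kappa}(1-t)^{8/\kappa} = t^{-4/\kappa}(1-t)^{8/\kappa}$ (times a regular factor), whose Beta-integral is finite for $\kappa\in(4,8)$; at infinity the $v_1^{-2}v_2^{-2}$ factors dominate and ensure integrability. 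The same envelope $C\,v_1^{-2}v_2^{-2}(v_1-1)^{-4/\kappa}(v_2-1)^{-4/\kappa}(v_1-v_2)^{8/\kappa}$ serves as the uniform bound needed to apply dominated convergence for $I(s)/s^{2/\kappa}$, and analogously for $G(0,s,1)/s^{2/\kappa}$, completing the proof.
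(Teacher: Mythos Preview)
Your proof is correct and follows essentially the same approach as the paper: specialize Proposition~\ref{prop::nonintersecting} with $(y_0,y_1,y_2)=(0,s,1)$ and $b\mapsto bs$, pull out the common factor $s^{2/\kappa}$ from both $G(0,s,1)$ and the double integral, and pass to the limit by dominated convergence. Your treatment is in fact slightly more careful than the paper's, since you explicitly justify the integrable envelope for the dominated convergence step and verify the finiteness of the limiting double integral $J$ via the substitution $v_1=1+r$, $v_2=1+rt$; the paper simply asserts these limits.
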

\begin{proof}
Note that by Proposition~\ref{prop::nonintersecting}, we have
\begin{align}
&\QQ_2[\gamma_1\cap\gamma_2=\emptyset, B_1\in (0,bs), B_2\in (bs,s)]\notag\\
=&\frac{1}{C_{\kappa}\int^{\infty}_{1} \ud v f(0,s,1;v)}\label{eqn::restrictionversion}\\
&\times\underbrace{\int^{\infty}_{1}\ud v_2\int^{\infty}_{v_2}\ud v_1 f(0,s,1;v_1,v_2)\times (bs)^{1-4/\kappa}(s-bs)^{1-4/\kappa}(1-bs)^{1-4/\kappa}(v_1-bs)^{8/\kappa-2}(v_2-bs)^{8/\kappa-2}}_{Z(s)}. \notag
\end{align}
By dominated convergence theorem, we have 
\begin{align*}
&\lim_{s\to 0}\frac{1}{s^{2/\kappa}}\int^{\infty}_{1} \ud v f(0,s,1;v)=\int_1^{\infty}\ud v(v-1)^{-4/\kappa}v^{-8/\kappa}=B(12/\kappa-1, 1-4/\kappa).\\
&\lim_{s\to 0}\frac{1}{s^{2-6/\kappa}}Z(s)
=b^{1-4/\kappa}(1-b)^{1-4/\kappa}\int^{\infty}_{1}\ud v_2\int^{\infty}_{v_2}\ud v_1 (v_1-1)^{-4/\kappa}(v_2-1)^{-4/\kappa}(v_2-v_1)^{8/\kappa}v_1^{-2}v_2^{-2}.
\end{align*}
Plugging these two asymptotics into~\eqref{eqn::restrictionversion}, we obtain~\eqref{eqn::probestimate}.
\end{proof}

\subsection{Proof of Proposition~\ref{prop::global2_mono}}
\label{subsec::global2_mono}
We will prove Proposition~\ref{prop::global2_mono} using Corollary~\ref{coro::asy}. 
From the conformal invariance, we may assume that $\Omega=\HH$ and $x_1=\infty, x_2=0, x_4=1$. With such normalization, $s=x_3$ is the cross-ratio (see Lemma~\ref{lem::Ls}).  We sample $(\gamma_1,\gamma_2)$ as in Definition~\ref{def::Q2}. We need to estimate the probability
\[\QQ_2[\gamma_1\cap\gamma_2=\emptyset].\]
Recall from Proposition~\ref{prop::nonintersecting} that 
$\tau_1$ (resp. $\tau_2$) is the fist hitting time of $\gamma_1$ (resp. $\gamma_2$) at $(1,\infty)$ and that 
$B_1$ (resp. $B_2$) is the last hitting point of $\gamma_1$ (resp. $\gamma_2$) at $(0,s)$ before $\tau_1$ (resp. $\tau_2$). From Corollary~\ref{coro::asy}, we already have the asymptotics of the probability: 
\[\QQ_2[\gamma_1\cap\gamma_2=\emptyset, B_1\in (0,bs), B_2\in (bs,s)].\]
We will show in the following lemma that these two probabilities are comparable as $s\to 0$. 

\begin{lemma}\label{lem::fullprobability}
For every $b\in (0,1)$, there exists a constant $C=C(b)\in(0,1)$ such that, for all $s>0$ small,
\begin{equation}\label{eqn::full3}
\QQ_2[\gamma_1\cap\gamma_2=\emptyset, B_1\in (0,bs), B_2\in (bs,s)]\ge C\QQ_2[\gamma_1\cap\gamma_2=\emptyset].
\end{equation}
\end{lemma}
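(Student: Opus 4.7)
The strategy is to combine the lower bound provided by Corollary~\ref{coro::asy} with a matching upper bound for $P(s):=\QQ_2[\gamma_1\cap\gamma_2=\emptyset]$. Concretely, Corollary~\ref{coro::asy} directly yields, for any fixed $b\in(0,1)$,
\[
\QQ_2[\gamma_1\cap\gamma_2=\emptyset,\,B_1\in(0,bs),\,B_2\in(bs,s)]\ge c_1(b)\,s^{2-8/\kappa}
\]
for all $s$ sufficiently small, with $c_1(b)>0$. If one can prove the matching upper bound
\[
P(s)\le C_2\,s^{2-8/\kappa}
\]
for a universal constant $C_2$ (independent of $s$), then the lemma is immediate by setting $C(b):=c_1(b)/C_2\in(0,1)$ (after shrinking if needed).

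To prove the upper bound on $P(s)$, I would fix the reference point $b_0=1/2$ and decompose
\[
P(s)=F(s/2)+T_L(s)+T_R(s),
\]
where $F(b):=\QQ_2[\gamma_1\cap\gamma_2=\emptyset,\,B_1<b<B_2]$ is given by the explicit integral formula in Proposition~\ref{prop::nonintersecting}, and $T_L(s):=\QQ_2[\gamma_1\cap\gamma_2=\emptyset,\,B_2\le s/2]$, $T_R(s):=\QQ_2[\gamma_1\cap\gamma_2=\emptyset,\,B_1\ge s/2]$. The term $F(s/2)$ is of order $s^{2-8/\kappa}$ by Corollary~\ref{coro::asy}. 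From the explicit form
\[
F(b)=\frac{b^{1-4/\kappa}(s-b)^{1-4/\kappa}(1-b)^{1-4/\kappa}}{C_\kappa G}\,H(b),
\]
and the fact that $H(b)/G$ is bounded uniformly in $b\in(0,s)$ as $s\to 0$, one obtains the uniform estimate $F(b)\le C\,b^{1-4/\kappa}(s-b)^{1-4/\kappa}$ for all $b\in(0,s)$. Iterating the decomposition dyadically on the tail $T_L$, using reference points $s/4,s/8,\ldots$, each step produces a contribution bounded by $F(s 2^{-n})\le C\,s^{2-8/\kappa}\,2^{-n(1-4/\kappa)}$. Since $\kappa\in(4,8)$ gives $1-4/\kappa>0$, this forms a convergent geometric series with sum $O(s^{2-8/\kappa})$; the symmetric argument handles $T_R(s)$.

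The main obstacle is controlling the ``same-interval'' contributions $\QQ_2[\gamma_1\cap\gamma_2=\emptyset,\,s 2^{-n-1}\le B_1<B_2\le s 2^{-n}]$ that appear at each level of the dyadic iteration, since these are not directly dominated by $F$ at the scale $s 2^{-n}$. To bound them, I would either cover $(s 2^{-n-1},s 2^{-n})$ by a finite number of reference points interior to the sub-interval and apply the uniform bound on $F$, or observe that any such event implies $B_1<b^*<B_2$ for some $b^*$ in the middle of the sub-interval, and use $F(b^*)\le C\,(s 2^{-n})^{1-4/\kappa}\,s^{1-4/\kappa}$. A careful accounting shows the total contribution over all dyadic levels remains $O(s^{2-8/\kappa})$, completing the upper bound and hence the proof.
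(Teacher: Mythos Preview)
Your approach is genuinely different from the paper's, and the gap you yourself flag---the ``same-interval'' contributions---is real and not resolved by the fixes you sketch. If $B_1,B_2$ both lie in a dyadic sub-interval $I$, there is no single deterministic separator $b^*\in I$ with $B_1<b^*<B_2$; and covering $I$ by finitely many reference points still leaves, at every stage, a residual event where $B_1,B_2$ fall into the same refined sub-interval. In your recursion, at depth $k$ inside $I_n=(s2^{-n-1},s2^{-n})$ you produce $2^k$ terms, each bounded only by $F(b^*)\asymp (s2^{-n})^{1-4/\kappa}s^{1-4/\kappa}$, so the series $\sum_k 2^k F_{\max,n}$ diverges. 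The ``careful accounting'' you allude to does not close without an additional ingredient (e.g.\ a joint density estimate for $(B_1,B_2)$ near the diagonal), which is not available from Proposition~\ref{prop::nonintersecting} alone.

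The paper avoids this difficulty by a direct conditioning argument that does not pass through an upper bound on $P(s)$. On $\{B_1\in(0,bs)\}$, one conditions on $\gamma_1$; by Lemma~\ref{lem::eta21underQ1} the conditional law of $\gamma_2$ is chordal $\SLE_\kappa$, so the explicit hitting-location formula of Lemma~\ref{lem::aux2} gives closed forms for both $\QQ_2[\gamma_1\cap\gamma_2=\emptyset,\,B_2\in(bs,s)\mid\gamma_1]$ and $\QQ_2[\gamma_1\cap\gamma_2=\emptyset\mid\gamma_1]$. Their ratio is bounded below by a product of cross-ratio-type quantities in the conformal image $\xi_{\gamma_1}$, and a short Loewner monotonicity computation shows these are at least $(1-b)(1-bs)$. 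This yields
\[
\QQ_2[\gamma_1\cap\gamma_2=\emptyset,\,B_1\in(0,bs),\,B_2\in(bs,s)]\ge C\,\QQ_2[\gamma_1\cap\gamma_2=\emptyset,\,B_1\in(0,bs)],
\]
and symmetrically for $B_2\in(bs,s)$; since on $\{\gamma_1\cap\gamma_2=\emptyset\}$ one has $B_1<B_2$, the two pieces cover the whole event. This route never needs to control the case where $B_1$ and $B_2$ are close, and it gives the inequality for every $s$, not just asymptotically.
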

\begin{proof}
We claim that there exists a constant $C$ such that 
\begin{align}\label{eqn::full1}
\QQ_2[\gamma_1\cap\gamma_2=\emptyset, B_1\in (0,bs), B_2\in (bs,s)]\ge C\QQ_2[\gamma_1\cap\gamma_2=\emptyset, B_1\in (0,bs)], \\
\label{eqn::full2}
\QQ_2[\gamma_1\cap\gamma_2=\emptyset, B_1\in (0,bs), B_2\in (bs,s)]\ge C\QQ_2[\gamma_1\cap\gamma_2=\emptyset, B_2\in (bs,s)].
\end{align}
We first prove~\eqref{eqn::full1}. 
We choose $\xi_{\gamma_1}$ as the same conformal map in the proof of Proposition~\ref{prop::nonintersecting}.
We denote by $\gamma\sim\PP^{\xi_{\gamma_1}(s)\to\xi_{\gamma_1}(1)}$ the $\SLE_\kappa$ from $\xi_{\gamma_1}(s)$ to $\xi_{\gamma_1}(1)$. 
By Lemma~\ref{lem::aux2}, on the event $\{B_1\in(0,bs)\}$, we have
\begin{align*}
&\QQ_2[\gamma_2\cap\gamma_1=\emptyset,B_2\in (bs,s)\cond\gamma_1]\\
=&\PP^{\xi_{\gamma_1}(s)\to\xi_{\gamma_1}(1)}[\gamma\text{ hits }(\xi_{\gamma_1}(1),+\infty)\text{ before }(-\infty,\xi_{\gamma_1}(bs))]\\
=&\frac{1}{ C_{\kappa}}(\xi_{\gamma_1}(s)-\xi_{\gamma_1}(bs))^{1-4/\kappa}(\xi_{\gamma_1}(1)-\xi_{\gamma_1}(bs))^{1-4/\kappa}(\xi_{\gamma_1}(1)-\xi_{\gamma_1}(s))^{6/\kappa-1}\\&\times\int^{\infty}_{\xi_{\gamma_1}(1)}\ud v f(\xi_{\gamma_1}(s),\xi_{\gamma_1}(1);v) (v-\xi_{\gamma_1}(bs))^{8/\kappa-2},
\end{align*}
and 
\begin{align*}
&\QQ_2[\gamma_2\cap\gamma_1=\emptyset\cond\gamma_1]\\
=&\PP^{\xi_{\gamma_1}(s)\to\xi_{\gamma_1}(1)}[\gamma\text{ hits }(\xi_{\gamma_1}(1),+\infty)\text{ before }(-\infty,\xi_{\gamma_1}(B_1))]\\
=&\frac{1}{ C_{\kappa}}(\xi_{\gamma_1}(s)-\xi_{\gamma_1}(B_1))^{1-4/\kappa}(\xi_{\gamma_1}(1)-\xi_{\gamma_1}(B_1))^{1-4/\kappa}(\xi_{\gamma_1}(1)-\xi_{\gamma_1}(s))^{6/\kappa-1}\\&\times\int^{\infty}_{\xi_{\gamma_1}(1)}\ud v f(\xi_{\gamma_1}(s),\xi_{\gamma_1}(1);v) (v-\xi_{\gamma_1}(B_1))^{8/\kappa-2}.
\end{align*}
Thus, on the event $\{B_1\in(0,bs)\}$, we have
\[\frac{\QQ_2[\gamma_2\cap\gamma_1=\emptyset,B_2\in (bs,s)\cond\gamma_1]}{\QQ_2[\gamma_2\cap\gamma_1=\emptyset\cond\gamma_1]}\ge\left(\frac{\xi_{\gamma_1}(s)-\xi_{\gamma_1}(bs)}{\xi_{\gamma_1}(s)-\xi_{\gamma_1}(B_1)}\right)^{1-4/\kappa}\left(\frac{\xi_{\gamma_1}(1)-\xi_{\gamma_1}(bs)}{\xi_{\gamma_1}(1)-\xi_{\gamma_1}(B_1)}\right)^{1-4/\kappa}.\]
It suffices to prove that, on the event $\{B_1\in(0,bs)\}$,  there exists a constant $C=C(b)$ such that 
\begin{equation}\label{eqn::comparison}
\frac{\xi_{\gamma_1}(s)-\xi_{\gamma_1}(bs)}{\xi_{\gamma_1}(s)-\xi_{\gamma_1}(B_1)}\times\frac{\xi_{\gamma_1}(1)-\xi_{\gamma_1}(bs)}{\xi_{\gamma_1}(1)-\xi_{\gamma_1}(B_1)}\ge C.
\end{equation}

We denote by $W$ the driving function of $\gamma_1$ and by $(g_t, t\ge 0)$ the coresponding conformal maps. We choose a point $b'\in(0,bs)$  and define 
\[F_t:=\frac{g_t(s)-g_t(bs)}{g_t(s)-g_t(b')}.\]
Note that 
\[\partial_t\log F_t=\frac{2(g_t(bs)-g_t(b'))}{(g_t(s)-W_t)(g_t(bs)-W_t)(g_t(b')-W_t)}\ge 0.\]
Thus, we have $F_t$ is increasing. In particular, $F_{\tau_1}:=\lim_{t\to\tau_1}F_t\ge F_0$. This implies
\[\frac{\xi_{\gamma_1}(s)-\xi_{\gamma_1}(bs)}{\xi_{\gamma_1}(s)-\xi_{\gamma_1}(b')}\ge 1-b.\]
By letting $b'\to B_1$, we have 
\begin{equation}\label{eqn::comp1}
\frac{\xi_{\gamma_1}(s)-\xi_{\gamma_1}(bs)}{\xi_{\gamma_1}(s)-\xi_{\gamma_1}(B_1)}\ge 1-b.
\end{equation}
Similarly, we have
\begin{equation}\label{eqn::comp2}
\frac{\xi_{\gamma_1}(1)-\xi_{\gamma_1}(bs)}{\xi_{\gamma_1}(1)-\xi_{\gamma_1}(B_1)}\ge 1-bs.
\end{equation}
By~\eqref{eqn::comp1} and~\eqref{eqn::comp2}, we derive~\eqref{eqn::comparison}.
This completes the proof of~\eqref{eqn::full1}. Note that~\eqref{eqn::full2} can be proved similarly. 

For~\eqref{eqn::full3}, we have 
\begin{align*}
\QQ_2[\gamma_1\cap\gamma_2=\emptyset]&=\QQ_2[\gamma_1\cap\gamma_2=\emptyset,B_1\in (0,bs)]+\QQ_2[\gamma_1\cap\gamma_2=\emptyset, B_1\in (bs,s)]\\
&\le \QQ_2[\gamma_1\cap\gamma_2=\emptyset,B_1\in (0,bs)]+\QQ_2[\gamma_1\cap\gamma_2=\emptyset, B_2\in (bs,s)].
\end{align*}
Combining with~\eqref{eqn::full1} and~\eqref{eqn::full2}, we complete the proof.
\end{proof}
\begin{proof}[Proof of Proposition~\ref{prop::global2_mono}]
This is a combination of Corollary~\ref{coro::asy}, Lemma~\ref{lem::fullprobability} and Lemma~\ref{lem::Ls}. 
\end{proof}

\section{Proof of Proposition~\ref{prop::FKIsing_mono}}
\label{sec::FKIsing_mono}
\subsection{Preliminaries}
\subsubsection*{Random-cluster model}
We focus on the square lattice $\Z^2$, which is the graph with vertex set $V(\Z^2):=\{ z = (m, n) \colon m, n\in \Z\}$ and edge set $E(\Z^2)$ given by edges between 
those vertices whose Euclidean distance equals one (called neighbors). 
This is our primal lattice. Its standard dual lattice is denoted by $(\Z^2)^{\bullet}$. 
The medial lattice $(\Z^2)^{\diamond}$ is the graph with centers of edges of $\Z^2$ as its vertex set and edges connecting %nearest 
neighbors. 
For a subgraph $\graph \subset \Z^2$ (resp.~of $(\Z^2)^{\bullet}$ or $(\Z^2)^{\diamond}$), we define its boundary to be the following set of vertices:
\begin{align*}
	\partial \graph = \{ z \in V(\graph) \, \colon \, \exists \; w \not\in V(\graph) \text{ such that }\edge{z}{w}\in E(\Z^2) \text{ (resp., $\edge{z}{w}\in E((\Z^2)^{\bullet})$ or $\edge{z}{w}\in E((\Z^2)^{\diamond})$ )}\} .
\end{align*}
When we add the subscript or superscript $\delta$, we mean that subgraphs of the lattices $\Z^2, (\Z^2)^{\bullet}, (\Z^2)^\diamond$ have been scaled by $\delta > 0$. 

Let $\graph = (V(\graph), E(\graph))$ be a finite subgraph of $\Z^2$.
A random-cluster {configuration} 
$\omega=(\omega_e)_{e \in E(\graph)}$ is an element of $\{0,1\}^{E(\graph)}$.
An edge $e \in E(\graph)$ is said to be {open} (resp.~{closed}) if $\omega_e=1$ (resp.~$\omega_e=0$).
We view the configuration $\omega$ 
as a subgraph of $\graph$ with vertex set $V(\graph)$  and edge set $\{e\in E(\graph) \colon \omega_e=1\}$.
We denote by $o(\omega)$ (resp.~$c(\omega)$) the number of open (resp.~closed) edges in~$\omega$.
The boundary conditions encode how the vertices are connected outside of $\graph$.
Precisely, by a {boundary condition} $\bssymb$ we refer to a partition $\bssymb_1 \sqcup \cdots \sqcup \bssymb_m$ of $\partial \graph$.
Two vertices $z,w \in \partial \graph$ are said to be {wired} in $\bssymb$ if $z,w \in \bssymb_j$ for some $j$. 
In contrast, {free} boundary segments comprise vertices that are not wired with any other vertex (so the corresponding part $\pi_j$ is a singleton).  
We denote by $\omega^{\bssymb}$
the (quotient) graph obtained from the configuration $\omega$ by identifying the wired vertices in $\bssymb$.
The {random-cluster model} on $\graph$ with edge-weight $p\in [0,1]$, cluster-weight $q>0$,
and boundary condition $\bssymb$, is the probability measure $\smash{\PRCM^{\bssymb}_{p,q,\graph}}$ on
the set $\{0,1\}^{E(\graph)}$ of configurations $\omega$  defined by
\begin{align*}
	\PRCM^{\bssymb}_{p,q,\graph}[\omega] 
	:= \; & \frac{p^{o(\omega)}(1-p)^{c(\omega)}q^{k(\omega^{\bssymb})}}{\underset{\omega \in \{0,1\}^{E(\graph)}}{\sum} p^{o(\omega)}(1-p)^{c(\omega)}q^{k(\omega^{\bssymb})} } ,
\end{align*}
where $k(\omega^{\bssymb})$ is the number of 
connected components of the graph $\omega^{\bssymb}$.
It has been proven for the range $q \in [1,4]$~\cite{DuminilSidoraviciusTassionContinuityPhaseTransition} 
that the critical edge-weight is given by
\begin{align*}
	p = p_c(q) := \frac{\sqrt{q}}{1+\sqrt{q}}, 
\end{align*}
in the sense that for $p > p_c(q)$ there almost surely exists an infinite cluster, while for $p < p_c(q)$ there does not.
In this section, we focus on the critical FK-Ising model, that is, the random-cluster model with $q=2$ and $p=p_c(2)$.
 
A {discrete quad} is a simply connected subgraph $\Omega$ of $\Z^2$, 
 {or $\delta \Z^2$,}
 with four marked boundary points $x_1, x_2, x_3 , x_{4}$ in counterclockwise order, whose precise definition is given below. Firstly, we define the {medial quad}. We give orientation to edges of the medial graph $(\Z^2)^\diamond$ as follows: edges of each face containing a primal vertex have clockwise orientation, while edges of each face containing a dual vertex have counterclockwise orientation. 
 Let $x_1^\diamond,x_2^{\diamond},x_3^{\diamond}, x_{4}^\diamond$ be four distinct medial vertices. Let $(x_2^\diamond \, x_1^\diamond), (x_2^\diamond \, x_3^\diamond), (x_4^\diamond \, x_3^\diamond) , (x_{4}^\diamond  \, x_{1}^\diamond)$ be four oriented paths on $(\Z^2)^\diamond$ satisfying the following four conditions (we use the convention that $x_5^{\diamond}:=x_1^{\diamond}$): (1) all of the paths are edge-self-avoiding and satisfy $(x_{2j}^{\diamond}x_{2j-1}^{\diamond})\cap (x_{2j}^{\diamond}x_{2j+1}^{\diamond})=\{x_{2j}^{\diamond}\}$, $(x_{2j}^{\diamond}x_{2j+1}^{\diamond})\cap (x_{2j+2}^{\diamond}x_{2j+1}^{\diamond})=\{x_{2j+1}^{\diamond}\}$, and any other pairs in these four paths do not have common vertex;
(2) the infinite connected component of $(\Z^2)^\diamond\setminus \cup_{1\leq j\leq 2}\big(
	(x_{2j}^\diamond \, x_{2j+1}^\diamond)\cup (x_{2j}^{\diamond}x_{2j-1}^{\diamond})\big)$ lies on the left (resp., right) of the oriented paths $(x_2^\diamond \, x_1^\diamond)$ and $(x_4^{\diamond} \, x_3^{\diamond})$ (resp., $(x_2^{\diamond}\, x_3^{\diamond})$ and $(x_4^{\diamond}\, x_1^{\diamond})$).  Given $(x_2^\diamond \, x_1^\diamond), (x_2^\diamond \, x_3^\diamond), (x_4^\diamond \, x_3^\diamond) , (x_{4}^\diamond  \, x_{1}^\diamond)$, the medial quad $(\Omega^\diamond; x_1^\diamond,x_2^{\diamond},x_3^{\diamond}, x_{4}^\diamond)$ is defined 
 as the subgraph of $(\Z^2)^\diamond$ induced by the vertices lying on or enclosed by the non-oriented loop obtained by concatenating all of these four paths. 
 Secondly, we define the {primal quad} 
 $(\Omega;x_1,x_2,x_3,x_{4})$ induced by $(\Omega^\diamond;x_1^\diamond,x_2^{\diamond},x_3^{\diamond},x_{4}^\diamond)$ as follows: (1) the edge set $E(\Omega)$ consists of edges passing through endpoints of medial edges in $E(\Omega^\diamond)\setminus \left((x_2^{\diamond}x_1^{\diamond})\cup (x_4^{\diamond}x_3^{\diamond})\right)$, while the vertex set $V(\Omega)$ consists of endpoints of edges in $E(G)$;
(2) the arc $(x_{2j}x_{2j+1})$ is defined to be the set of edges lying outside of $\Omega^{\diamond}$ whose midpoints are vertices in $(x_{2j}^{\diamond}x_{2j+1}^{\diamond})$, and the arc $(x_{2j}x_{2j+1})$ is oriented to have $\Omega^{\diamond}$ on its left, for $1\leq j\leq 2$ (we use the convention that $x_5:=x_1$). %(3) the marked boundary vertex $x_{2j}$ (resp., $x_{2j+1}$) is defined to be the beginning vertex (resp., the end vertex) of the oriented arc $(x_{2j}x_{2j+1})$ for $1\leq j\leq 2$. 
For $j\in \{1,2,3,4\}$, the outer corner $y_j^{\diamond}\in (\mathbb{Z}^2)^{\diamond}\setminus \Omega^{\diamond}$ is defined to be the medial vertex lying on the face  containing $x_j$ that is adjacent to $x_j^{\diamond}$. See Figure~\ref{fig::quad} for an illustration. 
% (2) the marked boundary vertex $x_j$ is defined to be the vertex in $\Omega$ nearest to $x_j^\diamond$ for each $1\leq j\leq 4$; (3) the arc $(x_{2j} \, x_{2j+1})$ is the set of edges whose midpoints are vertices in $(x_{2j}^\diamond \, x_{2j+1}^\diamond)\cap \partial \Omega$ for $1\leq j \leq 2$.

 \begin{figure}[ht!]
 	\begin{center}
 		\includegraphics[width=0.75\textwidth]{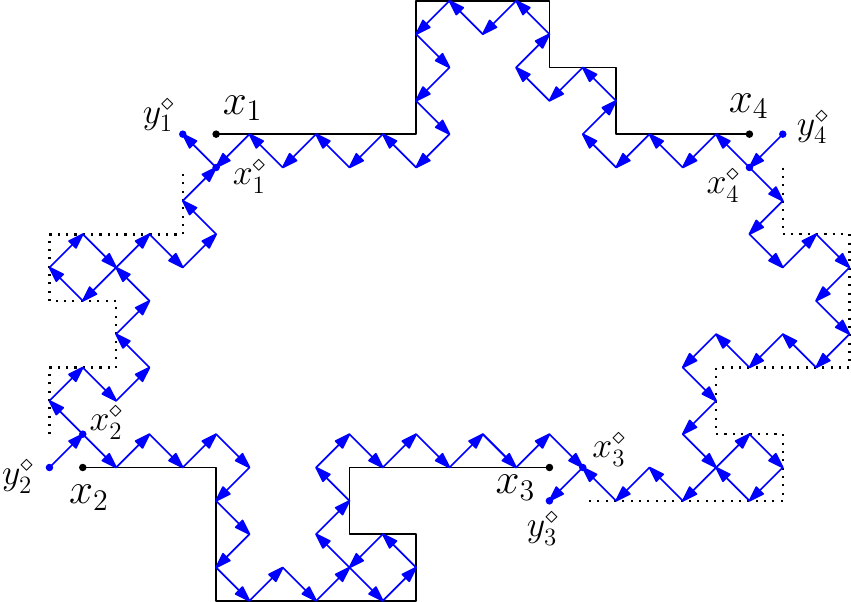}
 	\end{center}
 	\caption{Illustration of a discrete quad. The blue lines with arrows represent oriented medial edges. And the blue loop is the boundary of the medial quad $\Omega^{\diamond}$. 
 		The black lines represent edges of the primal lattice, while the dashed lines represent edges of the dual lattice. }
 		 	\label{fig::quad}
 \end{figure}

 We shall focus on the critical FK-Ising model on the primal quad $(\Omega^\delta; x_1^\delta,x_2^{\delta},x_3^{\delta}, x_{4}^\delta)$ with the alternating boundary conditions~\eqref{eqn::bc}. We denote by $\mathbb{P}^{\delta}$ the corresponding probability measure. Recall that
 \begin{itemize}
 	\item We denote by $\conn^{(1)}=\conn^{(1)}(\Omega^{\delta}; x_1^{\delta}, x_2^{\delta}, x_3^{\delta}, x_4^{\delta})$ the event that there exists an open path in $\Omega^{\delta}$ connecting the boundary arcs $(x_2^{\delta}x_3^{\delta})$ and $(x_4^{\delta}x_1^{\delta})$. 
 	\item We denote by $\conn^{(2)}=\conn^{(2)}(\Omega^{\delta}; x_1^{\delta}, x_2^{\delta}, x_3^{\delta}, x_4^{\delta})$ the event that there exist two disjoint open paths in $\Omega^{\delta}$ connecting the boundary arcs $(x_2^{\delta}x_3^{\delta})$ and $(x_4^{\delta}x_1^{\delta})$. 
 \end{itemize}

 Now let $\omega\in \{0,1\}^{E(\Omega^\delta)}$ be a configuration of the FK-Ising model on $(\Omega^{\delta};x_1^{\delta},x_2^{\delta},x_3^{\delta},x_{4}^{\delta})$ with the boundary condition~\eqref{eqn::bc}. Given $\omega$, we can draw edge-self-avoiding loops on $\Omega^{\delta, \diamond}$ as follows: a loop arriving at a vertex of $\Omega^{\delta,\diamond }$ always makes a turn of $\pm\pi/2$, so as not to cross the open or dual open edges through this vertex. Given $\omega$, the {loop representation} contains loops and $2$ pairwise edge-disjoint and edge-self-avoiding {interfaces} connecting the four outer corners $y_{1}^{\delta,\diamond}, y_2^{\delta,\diamond},y_3^{\delta,\diamond},y_{4}^{\delta,\diamond}$ of  the medial polygon $(\Omega^{\delta,\diamond};x_1^{\delta,\diamond},x_2^{\delta,\diamond},x_3^{\delta,\diamond}, x_{4}^{\delta,\diamond})$. Note that conditional on the event $\conn^{(1)}$, the two interfaces connect the outer corners $y_2^{\delta,\diamond}$ to $y_1^{\delta,\diamond}$ and $y_4^{\delta,\diamond}$ to $y_3^{\delta,\diamond}$, respectively; we denote by $\eta_1^{\delta}$ and $\eta_2^{\delta}$ these two interfaces.

\subsubsection*{Convergence of crossing probability for $\conn^{(1)}$}

Fix a quad $(\Omega;x_1,x_2,x_3,x_4)$. 
We say that a sequence of discrete quads $(\Omega^{\delta}; x_1^{\delta}, x_2^{\delta},x_3^{\delta}, x_{4}^{\delta})$
converges as $\delta \to 0$ to $(\Omega; x_1, x_2,x_3, x_{4})$ in the {Carath\'{e}odory sense}
if there exist conformal maps $\varphi_{\delta}$ from $\Omega^{\delta}$ onto $\U$,
and a conformal map $\varphi$ from $\Omega$ onto $\U$,
such that $\varphi_{\delta}^{-1} \to \varphi^{-1}$ locally uniformly on $\U$,
and $\varphi_{\delta}(x_j^{\delta}) \to \varphi(x_j)$ for all $1\le j\le 4$. 
Under such convergence, we have the convergence of the crossing probability~\cite{ChelkakSmirnovIsing} (see also~\cite[Corollary~2.7]{IzyurovObservableFree} and~\cite[Eq.~(117)]{FloresSimmonsKlebanZiffCrossingProba}): 
\begin{align}\label{eqn::cross_proba_1}
		p^{(1)}_{\mathrm{FKIsing}}(\Omega; x_1, x_2, x_3, x_4):=&
		\lim_{\delta\to 0}\PP^{\delta}\left[\conn^{(1)}(\Omega^{\delta}; x_1^{\delta}, x_2^{\delta}, x_3^{\delta}, x_4^{\delta})\right]=\frac{\sqrt{s}+1-\sqrt{1-s}}{1+\sqrt{s}}.
	\end{align}
Denote by $L$ the extremal distance between $(x_2x_3)$ and $(x_4x_1)$ in $\Omega$, then it follows from~\eqref{eqn::cross_proba_1} and Lemma~\ref{lem::Ls} that
\begin{equation} \label{eqn::asy_FK_1}
	p^{(1)}_{\mathrm{FKIsing}}(L):=p^{(1)}_{\mathrm{FKIsing}}(\Omega; x_1, x_2, x_3, x_4)\sim \exp(-L/2), \quad\text{as }L\to\infty.
\end{equation}

\subsubsection*{Convergence of the pair of interfaces}

Next, we state the convergence of the pair of interfaces $(\eta_1^{\delta},\eta_2^{\delta})$ conditional on the event $\conn^{(1)}$.  
Recall that $X$ denotes the set of planar oriented curves and we equip $X$ with metric~\eqref{eqn::curve_metric}. Now we consider the space of pairs of curves. We endow $X^{2}$ with the following metric
\begin{equation}\label{eq::curve_metric_2} 
	\metric\left(\left(\eta_j\right)_{1\leq j\leq 2}, \left(\tilde{\eta}_j\right)_{1\leq j\leq 2}\right):=\sup_{1\leq j\leq 2}\dist\left(\eta_j,\tilde{\eta}_j\right),
\end{equation}
where $\dist(\eta, \tilde{\eta})$ is defined in~\eqref{eqn::curve_metric}.
Then the metric space $(X^{2},\metric)$ is complete and separable (see e.g.,~\cite[Section~3.3.1]{KarrilaMultipleSLELocalGlobal}).

Note that Carath\'{e}odory convergence allows wild behavior of the boundaries around the marked points. 
In order to ensure
precompactness of the interfaces, we need a convergence on polygons stronger than the above Carath\'{e}odory convergence. The following notion was introduced by Karrila in~\cite{KarrilaConformalImage}. See also~\cite{KarrilaMultipleSLELocalGlobal} and~\cite{ChelkakWanMassiveLERW}.
Fix a quad $(\Omega; x_1, x_2, x_3, x_4)$. We say that a sequence of discrete quads $(\Omega^{\delta}; x_1^{\delta}, x_2^{\delta},x_3^{\delta}, x_{4}^{\delta})$  
	converges as $\delta \to 0$ to $(\Omega; x_1, x_2,x_3, x_{4})$ in the {close-Carath\'{e}odory sense} if it converges in the Carath\'{e}odory sense, and in addition, for each $j\in\{1, 2,3, 4\}$, we have $x_j^{\delta}\to x_j$ as $\delta\to 0$ and the following is fulfilled: 
	Given a reference point $u\in\Omega$ and 
	$r>0$ small enough, let $S_r$ be the arc of $\partial B(x_j,r)\cap\Omega$ disconnecting (in $\Omega$) $x_j$ from $u$ and from all other arcs of this set. We require that, for each $r$ small enough and for all sufficiently small $\delta$ (depending on $r$), the boundary point $x_j^{\delta}$ is connected to the midpoint of $S_r$ inside $\Omega^{\delta}\cap B(x_j,r)$.

\begin{lemma} \label{lem::cvg_global_sle}
Fix a quad $(\Omega;x_1,x_2,x_3,x_4)$ and suppose that $(\Omega^{\delta};x_1^{\delta},x_2^{\delta},x_3^{\delta},x_4^{\delta})$ is a sequence of discrete quads that converges to $(\Omega;x_1,x_2,x_3,x_4)$ in the close-Carath\'{e}odory sense. Then as $\delta\to 0$, conditional on the event $\vartheta^{(1)}(\Omega^{\delta};x_1^{\delta},x_2^{\delta},x_3^{\delta},x_4^{\delta})$, the pair of interfaces $(\eta_1^{\delta},\eta_2^{\delta})$ converges in distribution to the global 2-$\SLE_{16/3}$ in $X_0(\Omega; x_1, x_2, x_3, x_4)$
under the topology induced by~\eqref{eq::curve_metric_2}.
\end{lemma}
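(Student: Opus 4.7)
The plan is to combine a tightness argument for the discrete interface pair with a characterization of any subsequential limit via Lemma~\ref{lem::characterisation}. First I would establish that the laws of $(\eta_1^\delta, \eta_2^\delta)$ conditional on $\vartheta^{(1)}$ form a tight family in $(X^{2}, \metric)$. The crucial input is the Kemppainen--Smirnov tightness condition for FK-Ising interfaces (the standard ``Condition G2''), which was verified in \cite{CDCHKSConvergenceIsingSLE} for each single interface and extended to the multiple-interface setting in Karrila's framework \cite{KarrilaMultipleSLELocalGlobal}. The close-Carath\'eodory hypothesis on $(\Omega^\delta; x_1^\delta, \ldots, x_4^\delta)$ is precisely what is needed to control each $\eta_j^\delta$ uniformly near the marked points, and in particular to rule out subsequential limits in which the curves bubble off the boundary at $x_1, \ldots, x_4$. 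A tight argument of Karrila then ensures that every subsequential limit $(\eta_1,\eta_2)$ lies in $X_0(\Omega; x_1, x_2, x_3, x_4)$, i.e.\ $\eta_1$ connects $x_1$ and $x_2$ inside $\overline\Omega$, $\eta_2$ connects $x_3$ and $x_4$, neither hits the ``opposite'' free arc, and the two curves do not cross.

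Next I would identify every subsequential limit $(\eta_1, \eta_2)$ with global $2$-$\SLE_{16/3}$. By the spatial Markov property of the random-cluster model, conditional on $\eta_1^\delta$ and on $\vartheta^{(1)}$, the interface $\eta_2^\delta$ is the chordal FK-Ising interface in the connected component $\Omega^{\delta,R}$ of $\Omega^\delta\setminus\eta_1^\delta$ having $(x_3^\delta x_4^\delta)$ on its boundary, with wired boundary along $(x_2^\delta x_3^\delta)\cup\eta_1^\delta\cup (x_4^\delta x_1^\delta)$ and free boundary along $(x_3^\delta x_4^\delta)$. This is a Dobrushin setup with the two marked points $x_3^\delta, x_4^\delta$, so the Chelkak--Smirnov / CDCHKS convergence of the chordal FK-Ising interface to $\SLE_{16/3}$ applies, provided that $\Omega^{\delta,R}$ converges to the analogous limiting component $\Omega^R$ of $\Omega\setminus\eta_1$ in the close-Carath\'eodory sense. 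This yields that the conditional law of $\eta_2$ given $\eta_1$ is chordal $\SLE_{16/3}$ in $\Omega^R$ from $x_3$ to $x_4$. By symmetry, the conditional law of $\eta_1$ given $\eta_2$ is chordal $\SLE_{16/3}$ in the analogous component, so Definition~\ref{def::2SLE} together with the uniqueness of global $2$-$\SLE_{16/3}$ (see the references after Definition~\ref{def::2SLE}) identifies $(\eta_1, \eta_2)$ as global $2$-$\SLE_{16/3}$.

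The main obstacle is the step of passing from discrete to continuum in the conditional law. One must upgrade joint convergence in $(X^2, \metric)$ to close-Carath\'eodory convergence of the random Dobrushin domains $(\Omega^{\delta,R}; x_3^\delta, x_4^\delta)$ towards $(\Omega^R; x_3, x_4)$, along a Skorokhod coupling. The delicate point is that $(x_3 x_4) \cup \{x_3, x_4\}$ must survive as a genuine free arc in $\partial\Omega^R$, which amounts to showing that $\eta_1$ almost surely does not intersect the closed arc $[x_3 x_4]$ and remains close-Carath\'eodory in a neighborhood of $x_3$ and $x_4$. The first property follows from the constraint $\eta_1\cap[x_3x_4]=\emptyset$ already built into $X_0(\Omega; x_1, x_2, x_3, x_4)$ (together with Lemma~\ref{lem::FK_mono_aux1} applied to the limit to rule out tangencies), while the regularity near $x_3, x_4$ is the standard quad-crossing/close-Carath\'eodory convergence statement established in \cite{KarrilaConformalImage, KarrilaMultipleSLELocalGlobal}. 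Once these inputs are assembled, the single-interface Chelkak--Smirnov convergence can be applied inside $\Omega^{\delta,R}$ in the coupling, yielding the required conditional law in the limit and closing the identification.
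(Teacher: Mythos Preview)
The paper's own proof is a one-line citation: it simply records that the lemma is a special case of \cite[Proposition~1.4]{BeffaraPeltolaWuUniqueness}. Your proposal is not wrong; rather, you have sketched in some detail the strategy that the cited reference itself carries out (tightness via Kemppainen--Smirnov/Karrila, then identification of subsequential limits through the spatial Markov property and the single-interface Chelkak--Smirnov convergence in the random complementary domain). So the difference is one of packaging: the paper outsources the work entirely, while you reconstruct the argument.

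One small point worth flagging: your parenthetical appeal to Lemma~\ref{lem::FK_mono_aux1} is circular at that stage of the argument, since that lemma is stated for the global $2$-$\SLE_{16/3}$ measure $\QQ_2$, which is precisely what you are trying to identify the limit with. In practice the needed input---that $\eta_1$ does not touch $[x_3x_4]$ in the limit---comes from discrete boundary arm estimates (RSW), not from properties of $\QQ_2$; this is handled in the references you cite, so the sketch survives, but you should not invoke Lemma~\ref{lem::FK_mono_aux1} there.
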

\begin{proof}
	This is a special case of~\cite[Proposition~1.4]{BeffaraPeltolaWuUniqueness}.
\end{proof}

\subsection{Proof of Proposition~\ref{prop::FKIsing_mono}}
\begin{proof}[Proof of Proposition~\ref{prop::FKIsing_mono}]
Fix a quad $(\Omega;x_1,x_2,x_3,x_4)$ and suppose $(\Omega^{\delta};x_1^{\delta},x_2^{\delta},x_3^{\delta},x_{4}^{\delta})$ is a sequence of discrete quads that converges to $(\Omega;x_1^{\delta},x_2^{\delta},x_3^{\delta},x_{4}^{\delta})$ in the close-Carath\'{e}odory sense. 
\begin{itemize}
\item Suppose that $(\eta_1,\eta_2)\in X_0(\Omega;x_1,x_2,x_3,x_4)$ is  a global $2$-$\mathrm{SLE}_{16/3}$ such that $\eta_1$ goes from $x_2$ to $x_1$ and $\eta_2$ goes from $x_3$ to $x_4$. We denote by $\mathbb{Q}_2$ the law of $(\eta_1,\eta_2)$. 
\item We denote by $\mathbb{Q}^{\delta}$ the measure $\mathbb{P}^{\delta}$ conditional on the event $\vartheta^{(1)}(\Omega^{\delta};x_1^{\delta},x_2^{\delta},x_3^{\delta},x_4^{\delta})$. 
\end{itemize}
Thanks to Proposition~\ref{prop::global2_mono}, it suffices to show that
	\begin{equation}\label{eqn::FK_mono_aux1}
		\lim_{\delta\to 0} \mathbb{Q}^{\delta}\left[\vartheta^{(2)}(\Omega^{\delta};x_1^{\delta},x_2^{\delta},x_3^{\delta},x_4^{\delta})\right]= \mathbb{Q}_2\left[\eta_1\cap \eta_2=\emptyset\right].
	\end{equation}
	
From the convergence in Lemma~\ref{lem::cvg_global_sle}, we may couple $(\eta_1^{\delta},\eta_2^{\delta})$ under $\mathbb{Q}^{\delta}$ and $(\eta_1,\eta_2)$ under $\QQ_2$ into the same probability space so that $(\eta_1^{\delta},\eta_2^{\delta})$ converges to $(\eta_1,\eta_2)$ under the metric~\eqref{eq::curve_metric_2} almost surely. We denote by $\mathbb{Q}$ the coupled joint law. For two sets $A,B\subseteq \mathbb{C}$, we define 
\begin{equation*}
	\Dist(A,B):=\inf_{z_1\in A, z_2\in B} |z_1-z_2|.
\end{equation*}
Note that for any fixed $r>0$, when $\delta$ is small enough, we have 
	\begin{equation*}%\label{eqn::FK_mono_aux0}
\{\Dist(\eta_1^{\delta},\eta_2^{\delta})\geq r\}\subseteq \vartheta^{(2)}(\Omega^{\delta};x_1^{\delta},x_2^{\delta},x_3^{\delta},x_4^{\delta}).
	\end{equation*}
	From the almost sure convergence of $(\eta_1^{\delta},\eta_2^{\delta})$ to $(\eta_1,\eta_2)$, we have
	\begin{equation*}%\label{eqn::FK_mono_aux2}
\lim_{r\to 0}\lim_{\delta\to 0} \mathbb{Q}\left[\Dist(\eta_1^{\delta},\eta_2^{\delta})\geq r\right] = \lim_{r\to 0} \mathbb{Q}\left[\Dist(\eta_1,\eta_2)\geq r\right]= \mathbb{Q}\left[\eta_1\cap \eta_2=\emptyset\right].
	\end{equation*}
To show~\eqref{eqn::FK_mono_aux1}, it remains to show 
	\begin{equation}\label{eqn::FK_mono_aux2}
		\limsup_{r\to 0}\limsup _{\delta\to 0} \mathbb{Q}^{\delta}\left[\vartheta^{(2)}(\Omega^{\delta};x_1^{\delta},x_2^{\delta},x_3^{\delta},x_4^{\delta})\cap \{\Dist(\eta_1^{\delta},\eta_2^{\delta})<r\}\right]=0.
	\end{equation}
	 
For small enough $\epsilon>0$, we define
	\begin{equation*}
		\Omega_\epsilon:=\left\{z\in \Omega: \inf_{w\in \partial \Omega}|z-w|\geq \epsilon\right\}. 
	\end{equation*}
According to Lemma~\ref{lem::FK_mono_aux1} with $\kappa=16/3$, we have, for $j=1,2$,
\begin{align*}
	\lim_{\epsilon\to 0}\lim_{r\to 0} \mathbb{Q}\left[\Dist(\eta_j\setminus \Omega_{\epsilon},\eta_{3-j})\leq r\right]=\lim_{\epsilon\to 0} \mathbb{Q}\left[(\eta_1\cap \eta_2)\setminus \Omega_\epsilon\neq \emptyset\right]=\mathbb{Q}\left[(\eta_1\cap\eta_2\cap \partial \Omega)\neq \emptyset\right]=0.
\end{align*}From the almost sure convergence of $(
\eta_1^{\delta},\eta_2^{\delta})$ to $(\eta_1,\eta_2)$, we have, for $j=1,2$,
\begin{align*}
\limsup_{\epsilon\to 0}\limsup_{r\to 0}\limsup_{\delta\to 0} \mathbb{Q}^{\delta}\left[\Dist(\eta_j^{\delta}\setminus \Omega_{\epsilon},\eta_{3-j}^{\delta})< r\right]\leq \lim_{\epsilon\to 0}\lim_{r\to 0} \mathbb{Q}\left[\Dist(\eta_j\setminus \Omega_{\epsilon},\eta_{3-j})\leq r\right]=0.
\end{align*}Thus, it suffices to show that, for $\eps>0$ small enough, we have
\begin{equation} \label{eqn::FK_mono_aux3}
	\limsup_{r\to 0}\limsup _{\delta\to 0} \mathbb{Q}^{\delta}\left[\vartheta^{(2)}(\Omega^{\delta};x_1^{\delta},x_2^{\delta},x_3^{\delta},x_4^{\delta})\cap \{\Dist(\eta_1^{\delta}\cap \Omega_{\epsilon},\eta_2^{\delta}\cap\Omega_{\epsilon})<r\}\right]=0.
\end{equation}

We will show in Lemma~\ref{lem::FK_mono_aux2} that there exist two constants $c_1,c_2\in (0,\infty)$ depending only on $(\Omega;x_1,\ldots,x_4)$ such that for all $r\in (0,\epsilon/10)$ and small enough $\delta>0$, we have 
	\begin{equation} \label{eqn::FK_mono_aux4}
	\mathbb{Q}^{\delta}\left[\vartheta^{(2)}(\Omega^{\delta};x_1^{\delta},x_2^{\delta},x_3^{\delta},x_4^{\delta})\cap \{\Dist(\eta_1^{\delta}\cap \Omega_{\epsilon},\eta_2^{\delta}\cap\Omega_{\epsilon})<r\}\right]\leq c_1\times \frac{r^{c_2}}{\epsilon^{2+c_2}}.
	\end{equation}
Assume~\eqref{eqn::FK_mono_aux4} is true. Letting $\delta\to 0$ first and then $r\to 0$, we obtain~\eqref{eqn::FK_mono_aux3}, as desired.
%Let $r\in (0,\epsilon/10)$, we have 
%	\begin{align*}
%		&\limsup_{\delta\to 0}\mathbb{Q}\left[\{\eta_1^{\delta} \text{ and }\eta_2^{\delta}\text{ are vertex-disjoint}\}\cap \{\left(\eta_1\cap \eta_2\cap \Omega_{\epsilon}\right)\neq \emptyset\}\right]\notag \\
%		\leq & 	\limsup_{\delta\to 0}\mathbb{Q}\left[\{\eta_1^{\delta} \text{ and }\eta_2^{\delta} \text{ are vertex-disjoint}\}\cap \{\Dist\left(\eta_1^{\delta}\cap \Omega_{\epsilon}, \eta_2^{\delta}\cap \Omega_{\epsilon}\right)\leq r\}\right]\\
%		&+ \limsup_{\delta\to 0}\mathbb{Q}\left[\{\left(\eta_1\cap \eta_2\cap \Omega_{\eps}\right)\neq \emptyset\}\cap \{\Dist \left(\eta_1^{\delta}\cap\Omega_{\epsilon}, \eta_2^{\epsilon}\cap \Omega^{\epsilon}\right)>r\}\right]. 
%	\end{align*}
%Since $(\eta_1^{\delta},\eta_2^{\delta})\to (\eta_1,\eta_2)$ as $\delta\to 0$ almost surely, we have 
%	\begin{equation*}
%		\lim_{\delta\to 0}  \mathbb{Q}\left[\{\left(\eta_1\cap \eta_2\cap \Omega_{\eps}\right)\neq \emptyset\}\cap \{\Dist \left(\eta_1^{\delta}\cap\Omega_{\epsilon}, \eta_2^{\delta}\cap \Omega_{\epsilon}\right)>r\}\right]=0.
%	\end{equation*}
%Combining with~\eqref{eqn::FK_mono_aux4}, we have 
%	\begin{align*}
%		&\limsup_{\delta\to 0}\mathbb{Q}\left[\{\eta_1^{\delta} \text{ and }\eta_2^{\delta}\text{ are vertex-disjoint}\}\cap \{\left(\eta_1\cap \eta_2\cap \Omega_{\epsilon}\right)\neq \emptyset\}\right]\le c_1\times r^{c_2}. 
%	\end{align*}
%Letting $r\to 0$, we obtain~\eqref{eqn::FK_mono_aux3}, as desired. 
\end{proof}

\begin{lemma}\label{lem::FK_mono_aux2}
There exist two constants $c_1,c_2\in (0,\infty)$ depending only on $(\Omega;x_1,\ldots,x_4)$ such that for all $r\in (0,\epsilon/10)$ and small enough $\delta>0$, we have~\eqref{eqn::FK_mono_aux4}. 
\end{lemma}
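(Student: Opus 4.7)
The plan is to cover $\Omega_{\epsilon}$ by an $r$-net of size $O(r^{-2})$ and control each term of the union bound by a polychromatic arm event in an annulus of inner radius $\sim r$ and outer radius $\sim \epsilon$ centred at the corresponding point, using RSW arm-exponent estimates for critical FK-Ising. Passing from $\mathbb{Q}^{\delta}$ to $\mathbb{P}^{\delta}$ costs only a factor $1/\mathbb{P}^{\delta}[\vartheta^{(1)}]$, which is bounded uniformly in $\delta$ by~\eqref{eqn::cross_proba_1}.

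More precisely, cover $\Omega_{\epsilon}$ by $N=O(r^{-2})$ balls $B(z_j, r)$ with $z_j\in \Omega_{\epsilon}$ (the implicit constant depending on $\Omega$). On the event in the left-hand side of~\eqref{eqn::FK_mono_aux4} there exists some $j$ such that both $\eta_1^{\delta}$ and $\eta_2^{\delta}$ meet $B(z_j, 2r)$. It therefore suffices to prove, uniformly in $z\in\Omega_{\epsilon}$ and in sufficiently small $\delta$,
\[
\mathbb{P}^{\delta}\!\left[\vartheta^{(2)}\cap\bigl\{\eta_1^{\delta}\cap B(z,2r)\neq\emptyset,\ \eta_2^{\delta}\cap B(z,2r)\neq\emptyset\bigr\}\right]\le C(r/\epsilon)^{2+c_2}
\]
for some $C,c_2>0$, and then sum over the $N$ balls.

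A topological inspection reduces this per-point bound to an arm event. The two interfaces partition $\Omega^{\delta}$ into three simply connected regions $\Omega^L,\Omega^M,\Omega^R$, where $\Omega^L$ (resp.\ $\Omega^R$) has the free arc $(x_1^{\delta}x_2^{\delta})$ (resp.\ $(x_3^{\delta}x_4^{\delta})$) on its boundary, and $\Omega^M$ has the two wired arcs $(x_2^{\delta}x_3^{\delta})\cup(x_4^{\delta}x_1^{\delta})$ on its boundary. If both interfaces meet $B(z,2r)$ with $z\in\Omega_{\epsilon}$, then $\Omega^M$ is pinched at $z$, and this pinch separates the two wired arcs inside $\Omega^M$. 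Each interface crosses the annulus $A(z,2r,\epsilon/4)$; moreover, on $\vartheta^{(2)}$ the two disjoint open paths in $\Omega^M$ must both pass through the pinch, giving two further disjoint primal crossings of the annulus. Altogether, the event implies the existence in $A(z,2r,\epsilon/4)$ of four edge-disjoint primal arms (two from the interfaces' $\Omega^M$-sides and two from the open crossings) together with two edge-disjoint dual arms (one in $\Omega^L$, one in $\Omega^R$); edge-disjointness of the four primal arms can be enforced by taking the interface-adjacent ones as the extremal primal edges touching $\eta_1^{\delta}$ and $\eta_2^{\delta}$ on their $\Omega^M$-sides.

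Finally, by RSW theory for critical FK-Ising applied at bulk points $z$, such a polychromatic six-arm event has probability at most $C(r/\epsilon)^{2+c_2}$ for some $c_2>0$ uniformly in $\delta$: the polychromatic five-arm exponent is universally equal to $2$, and any further polychromatic arm strictly increases the exponent. Summing over the $N=O(r^{-2})$ balls yields the bound $Cr^{c_2}/\epsilon^{2+c_2}$, as required. The main technical obstacle is the arm-event reduction itself: justifying edge-disjointness of the four primal arms (a careful topological step using the pinch geometry of $\Omega^M$) and invoking a uniform-in-$\delta$ six-arm bound with exponent strictly greater than $2$ for critical FK-Ising, both of which rest on standard but nontrivial RSW-based arm-exponent estimates.
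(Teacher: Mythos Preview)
Your proposal is correct and follows essentially the same approach as the paper: pass from $\mathbb{Q}^{\delta}$ to $\mathbb{P}^{\delta}$ via the uniformly positive lower bound on $\mathbb{P}^{\delta}[\vartheta^{(1)}]$, cover $\Omega_{\epsilon}$ by $O(r^{-2})$ balls, reduce the per-ball event on $\vartheta^{(2)}$ to a polychromatic six-arm event in an annulus of aspect ratio $\sim r/\epsilon$, and invoke the universal five-arm exponent $2$ together with RSW to get a six-arm exponent strictly larger than $2$. The paper carries this out with the specific pattern $1/0/1/1/0/1$ and proves the six-arm bound as a separate lemma (using the quasi-multiplicativity and arm-separation machinery of Duminil-Copin--Manolescu--Tassion), but the structure is identical to yours. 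Your description of which four primal arms are pairwise edge-disjoint is slightly garbled---the interface-adjacent primal edges on the $\Omega^M$ side and the extremal open crossings are the same objects, so the clean count is that on $\vartheta^{(2)}$ each of the two disjoint open crossings traverses the pinch and hence contributes two primal arms, while $\Omega^L$ and $\Omega^R$ each contribute one dual arm---but you correctly flag this topological step as the main point requiring care, and the paper itself only asserts the six-arm reduction without a detailed justification.
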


\subsection{Proof of Lemma~\ref{lem::FK_mono_aux2}}
The goal of this section is to prove Lemma~\ref{lem::FK_mono_aux2}. In this section, we assume the same setup as in the proof for Proposition~\ref{prop::FKIsing_mono}. We emphasize that the conclusion of Lemma~\ref{lem::FK_mono_aux2} still holds if we consider general critical random-cluster models on $\Omega^{\delta}$ with $q\in[1,4)$, while the constants $c_1, c_2$ will then also depend on $q$. 
The idea is as follows: the event $\{\eta_1^{\delta} \text{ and }\eta_2^{\delta}\text{ are vertex-disjoint}\}\cap \{\Dist\left(\eta_1^{\delta}\cap \Omega_{\epsilon}, \eta_2^{\delta}\cap \Omega_{\epsilon}\right)\leq r\}$  forces a kind of interior six-arm event to appear, whose probability can be bounded using the corresponding arm exponent, which is strictly bigger than $2$.  We first collect some properties of critical random-cluster models with $q\in[1,4)$: a strong RSW estimates (Lemma~\ref{lem::RSW}) and a universal interior five-arm exponent (Lemma~\ref{lem::5arm}). 

For a discrete quad $(G;a,b,c,d)$, we denote by $L=L(G; a, b, c, d)$ the discrete extremal distance between $(ab)$ and $(cd)$ in $G$; see~\cite[Section~6]{ChelkakRobustComplexAnalysis}. The discrete extremal distance is uniformly comparable to its continuous counterpart--the classical extremal distance. 
	We denote  by $\{(ab)\longleftrightarrow(cd)\}$ the event that there exists a path of open edges in random-cluster model on $G$ connecting $(ab)$ to $(cd)$ in $G$. 
	We will use the following strong RSW estimate.

	\begin{lemma}{\cite[Theorem~1.2]{DCMTRCMFractalProperties}} \label{lem::RSW}
		Let $q\in [1,4)$. 	For each $L_0>0$, there exists $u(L_0)>0$ such that the following holds: for any discrete quad $(G;a,b,c,d)$ with $L(G; a, b, c, d)\le L_0$ and any boundary condition $\xi$, we have
		\begin{equation*}
			\mu_{p_c(q), q,G}^{\xi} \left[(ab)\longleftrightarrow(cd)\right]\geq u(L_0).
		\end{equation*}
	\end{lemma}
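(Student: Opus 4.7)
The plan is to combine three standard ingredients for critical random-cluster models with $q\in[1,4)$: (i) comparison of boundary conditions, valid for $q\ge 1$, to reduce to a single least favorable boundary condition; (ii) self-duality at $p_c(q)=\sqrt{q}/(1+\sqrt{q})$ together with the continuity of the phase transition established in \cite{DuminilSidoraviciusTassionContinuityPhaseTransition} to get a uniform crossing bound for squares at criticality; and (iii) an RSW-type gluing argument to propagate this bound to quads of bounded discrete extremal distance.

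First I would reduce to the free boundary condition and to a canonical shape. The event $\{(ab)\longleftrightarrow(cd)\}$ is increasing in $\omega$ and $q\ge 1$, so the comparison of boundary conditions yields
\begin{equation*}
\mu_{p_c(q),q,G}^{\xi}\bigl[(ab)\longleftrightarrow(cd)\bigr]\;\ge\;\mu_{p_c(q),q,G}^{\mathrm{free}}\bigl[(ab)\longleftrightarrow(cd)\bigr],
\end{equation*}
and the uniformity in $\xi$ in the statement is immediate once the bound is proved with free boundary data. Next, since the discrete extremal distance $L(G;a,b,c,d)$ is comparable (up to universal multiplicative constants) to the continuous extremal distance of any quad Carath\'eodory-close to $(G;a,b,c,d)$, one can embed into $G$ a discrete rectangle $R$ of aspect ratio bounded in terms of $L_0$ whose two short sides lie on $(ab)$ and $(cd)$ respectively. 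Together with the free boundary condition on $G$ and the domain Markov property, monotonicity in the domain reduces the task to a uniform-in-$\delta$ lower bound on the free-BC horizontal crossing probability of $R$.

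The core of the argument is this uniform crossing bound for rectangles of bounded aspect ratio. For a square, self-duality at $p_c(q)$ equates (up to a controlled boundary error) the probability of a primal horizontal crossing with that of a dual vertical crossing, and these are complementary events. If primal square crossing probabilities tended to zero along a subsequence of scales, the dual probabilities would tend to one, and a standard renormalization inequality based on FKG would then force exponential decay of the primal two-point function at $p_c(q)$, contradicting the continuity of the phase transition for $q\in[1,4]$ proved in \cite{DuminilSidoraviciusTassionContinuityPhaseTransition}. This gives the uniform square crossing bound. To pass from squares to rectangles of aspect ratio comparable to $e^{L_0}$ one iterates an RSW-type gluing: for $q=1$ this is the classical Russo--Seymour--Welsh argument via FKG; for $q\in(1,4)$ one handles the dependence by conditioning on the lowest horizontal crossing, controlling the induced mixed boundary conditions on the unexplored strip above, and combining two overlapping square crossings through the strong FKG property of the random-cluster model.

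The main obstacle is the gluing step when $q\in(1,4)$. Unlike independent percolation, random-cluster configurations possess nontrivial long-range dependence, so conditioning on a lowest crossing produces complicated boundary conditions and the naive Russo--Seymour--Welsh construction breaks down. The resolution, which is the technical heart of \cite{DCMTRCMFractalProperties}, is a dichotomy: crossing probabilities of long rectangles either decay at a positive rate (ruled out by sharpness and continuity of the phase transition for $q\in[1,4]$) or are uniformly bounded below; in the latter regime the gluing can be carried out using FKG together with explicit comparisons of the conditional boundary conditions. Iterating this gluing a bounded (depending on $L_0$) number of times yields the desired uniform lower bound $u(L_0)>0$.
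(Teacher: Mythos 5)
The paper does not prove this lemma: it is stated with the attribution \cite[Theorem~1.2]{DCMTRCMFractalProperties} and used as an external black box, so there is no in-paper argument to compare your proposal against. What you have written is a high-level summary of the strategy of the cited reference, and as such it correctly names the main ingredients (comparison of boundary conditions to reduce to free, self-duality for square crossings, an RSW-type propagation to bounded-aspect-ratio quads, and the dichotomy that makes this work for $q\in(1,4)$).

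However, treated as a proof it has genuine gaps, and the gaps sit exactly where the cited theorem is hard. First, the gluing step for $q\in(1,4)$ is not a variant of the classical RSW construction with FKG patching in: conditioning on a lowest crossing leaves an uncontrolled mixed boundary condition, and the resolution in \cite{DCMTRCMFractalProperties} is a paper-length dichotomy argument rather than a lemma you can invoke after two sentences. You acknowledge this, but the sketch still presents the conclusion as if it followed from "strong FKG" plus conditioning, which it does not. Second, what eliminates the decaying branch of the dichotomy is not "continuity of the phase transition" in the qualitative sense but the quantitative polynomial lower bound on connection probabilities at $p_c$ coming from \cite{DuminilSidoraviciusTassionContinuityPhaseTransition}; without making that precise, the contradiction you invoke is not established. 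Third, the reduction from an arbitrary discrete quad with $L(G;a,b,c,d)\le L_0$ to an embedded rectangle of bounded aspect ratio with short sides on $(ab)$ and $(cd)$ needs care, since the arcs $(ab)$ and $(cd)$ can be geometrically wild; one in fact needs the crossing estimates themselves in a topologically flexible (annulus/quad) form to carry this out, so there is a circularity risk in the way you phrased it. None of these is a wrong idea, but the proposal is a précis of a long external paper rather than a proof, and the appropriate move here is exactly what the authors do: cite \cite[Theorem~1.2]{DCMTRCMFractalProperties}.
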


We now introduce arm events. For $z\in \mathbb{C}$ and $0<r<R$, we write $\ball(z;r)=\{w\in \mathbb{C}: |w-z|<r\}$ and $A(z;r,R)=\{w\in \mathbb{C}:r<|z-w|<R\}$.  Suppose that $A(z;r,R)$ is inside of $\Omega^{\delta}$, we call a path of open edges (pattern 1) or a path of open dual edges (pattern 0) in the random-cluster model connecting $\partial \ball(z;r)$ to $\partial \ball(z;R)$ an \text{arm}. We denote by $\mathcal{A}_5(z;r,R)$ the event that there are five disjoint arms in $A(z;r,R)$ with patterns $1/0/1/0/1$ in counterclockwise order and denote by $\mathcal{A}_6(z;r,R)$ the event that there are six disjoint arms in $A(z;r,R)$ with patterns $1/0/1/1/0/1$ in counterclockwise order. 

\begin{lemma}  \label{lem::5arm}
	Let $q\in [1,4)$.  Let $A(z;r,R)\subseteq \Omega$ be an annulus such that 
	\[ \Dist\left(A(z;r,R),\partial\Omega\right)\geq \frac{\epsilon}{5} \quad\text{and}\quad R\leq\frac{\epsilon}{20}.\]
	 Then there exist two constants $c_3,c_4\in (0,\infty)$ which depend only on $q $ and $\epsilon$ such that for any boundary condition $\xi$, when $\delta$ is small enough, we have
	\begin{equation} \label{eqn::five_arm_proba}
		c_3 \left(\frac{r}{R}\right)^2\leq \mu^{\xi}_{p_c(q),q,\Omega^{\delta}} \left[\mathcal{A}_5(z;r,R)\right]\leq c_4  \left(\frac{r}{R}\right)^2. 
	\end{equation}
\end{lemma}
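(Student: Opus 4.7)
The plan is to prove the five-arm bounds in three stages: decouple the annulus from the boundary $\partial\Omega^{\delta}$ via Lemma~\ref{lem::RSW}, obtain quasi-multiplicativity of five-arm probabilities, and pin down the universal value $2$ of the five-arm exponent via a topological counting argument. For the decoupling, the hypotheses $\Dist(A(z;r,R),\partial\Omega)\ge \epsilon/5$ and $R\le \epsilon/20$ ensure that $\ball(z;2R)\subset \Omega^{\delta}$ lies at distance at least $\epsilon/10$ from $\partial\Omega^{\delta}$. The plan is to insert finitely many concentric annuli of bounded aspect ratio between $\ball(z;2R)$ and $\partial\Omega^{\delta}$, apply Lemma~\ref{lem::RSW} to each, and combine with FKG (valid for $q\ge 1$) to produce alternating primal/dual circuits separating $\ball(z;2R)$ from $\partial\Omega^{\delta}$, each with probability bounded below uniformly in $\xi$ and $\delta$. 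Conditioning on the innermost such primal circuit and invoking the domain Markov property then shows that $\mu^{\xi}_{p_c(q),q,\Omega^{\delta}}[\mathcal{A}_5(z;r,R)]$ is comparable, up to multiplicative constants depending only on $q$ and $\epsilon$, to the same probability under (say) wired boundary conditions on $\ball(z;2R)$.

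With the problem reduced to a fixed ball with controlled boundary conditions, I would invoke the standard arm-separation technique built on Lemma~\ref{lem::RSW} (developed for critical random-cluster models in~\cite{DCMTRCMFractalProperties} and subsequent works) to obtain the quasi-multiplicativity $\alpha_5(r,t)\asymp \alpha_5(r,s)\alpha_5(s,t)$ for $r\le s\le t\le R$, where $\alpha_5$ denotes the five-arm probability in the corresponding annulus. To pin down the universal exponent, I would argue as follows: the five-arm event $\mathcal{A}_5(w;r,R)$ with alternating pattern $1/0/1/0/1$ corresponds topologically to a 4-valent branching configuration of the interface graph (the medial-lattice curves separating primal from dual clusters) near $w$, with four incident interface segments of length at least $R$. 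Summing $\one\{\mathcal{A}_5(w;r,R)\}$ over lattice points $w\in \ball(z;R/2)\cap\delta\Z^2$, Fubini gives expectation $\asymp (R/\delta)^2\,\alpha_5(r,R)$; on the other hand, each macroscopic 4-valent branching vertex in $\ball(z;R)$ contributes at most $O((r/\delta)^2)$ such lattice points, and an Euler-characteristic / planarity argument bounds the expected number of such macroscopic branching vertices in $\ball(z;R)$ by a constant depending only on $q$. This yields the upper bound $\alpha_5(r,R)\le c_4 (r/R)^2$. For the matching lower bound, I would use Lemma~\ref{lem::RSW} and FKG to construct, for any prescribed position $w_0\in \ball(z;R/2)$, a positive-probability event producing a 4-valent branching vertex near $w_0$ whose four incident interfaces reach $\partial\ball(z;R)$, and then average over $w_0$.

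The hardest step will be the topological counting bound on the expected number of macroscopic 4-valent interface vertices in $\ball(z;R)$. While classical for Bernoulli percolation (Kesten, Nolin--Werner), its extension to critical random-cluster models with $q\in[1,4)$ requires care on two points: first, each lattice point $w$ satisfying $\mathcal{A}_5(w;r,R)$ must be associated to a unique macroscopic branching vertex in order to avoid overcounting; second, the four incident interfaces at such a vertex must be quantifiably well-separated, so that the planarity count produces only $O(1)$ macroscopic vertices in expectation with constants depending solely on $q$ and $\epsilon$. Both steps are achievable using Lemma~\ref{lem::RSW} together with the arm-separation machinery, but demand detailed planar-graph bookkeeping.
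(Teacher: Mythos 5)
The paper's proof of Lemma~\ref{lem::5arm} is a two-line citation: the bound~\eqref{eqn::five_arm_proba} for the infinite-volume measure is~\cite[Proposition~6.6]{DCMTRCMFractalProperties}, and it is transported to $\mu^{\xi}_{p_c(q),q,\Omega^{\delta}}$ using the mixing property of~\cite[Eq.~(2.4)]{DCMTRCMFractalProperties}. You instead propose to re-derive the universal five-arm estimate from scratch: decouple from $\partial\Omega^{\delta}$ via RSW and FKG, establish quasi-multiplicativity, and run a Kesten-style planarity count. This is a genuinely different and far more labor-intensive route. The paper's approach buys economy (the universal five-arm exponent for random-cluster models with $q\in[1,4)$ is precisely the content of the cited Proposition~6.6, so there is nothing new to prove), whereas yours would in effect reprove that proposition. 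Recognizing that both the infinite-volume five-arm bound and the boundary-condition transfer are already in the literature is the main thing you were missing.

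Two concrete cautions about your sketch. First, the decoupling step as you describe it does not cleanly yield two-sided comparability. Conditioning on an innermost primal circuit and invoking the domain Markov property gives a comparison with the \emph{wired} measure in one direction only; moreover the event $\mathcal{A}_5$ is neither increasing nor decreasing (it involves both primal and dual arms), so FKG-based stochastic domination cannot be applied to it directly. The right tool for transporting non-monotone events across boundary conditions is exactly the spatial mixing statement~\cite[Eq.~(2.4)]{DCMTRCMFractalProperties}, which is proved via a coupling argument built on RSW, not by FKG alone. Second, your ``4-valent branching vertex'' picture should be treated carefully: the interfaces between primal and dual clusters form a collection of disjoint simple loops and arcs and do not literally branch, so the planarity count must be phrased in terms of a bounded number of points where four interface strands pass within scale $r$, together with an arm-separation argument to associate each lattice point satisfying $\mathcal{A}_5(w;r,R)$ uniquely to such a configuration. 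You correctly flag this as the hardest step; it is exactly where the technical content of~\cite[Proposition~6.6]{DCMTRCMFractalProperties} lives, and your sketch stops short of carrying it out.
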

\begin{proof}
		When $\mu_{p_c(q),q,\Omega^{\delta}}^{\xi}$ is replaced by the infinite-volume measure $\mu_{p_c(q),q,\delta\mathbb{Z}^2}^{\xi}$, the estimate~\eqref{eqn::five_arm_proba} is given in~\cite[Proposition~6.6]{DCMTRCMFractalProperties}. Combining this with the mixing property in~\cite[Eq.~(2.4)]{DCMTRCMFractalProperties}, we obtain~\eqref{eqn::five_arm_proba} with the measure $\mu_{p_c(q),q,\Omega^{\delta}}^{\xi}$.
\end{proof}

With Lemmas~\ref{lem::RSW}-\ref{lem::5arm} at hand, the following estimate on the probability of $\mathcal{A}_6(z;r,R)$ follows from a standard argument (see e.g.,~\cite[Proof of Corollary~6.7]{DCMTRCMFractalProperties}). 
 \begin{lemma}\label{lem::six_arm}
 	Let $q\in [1,4)$.  Let $A(z;r,R)\subseteq \Omega$ be an annulus such that 
	\[ \Dist\left(A(z;r,R),\partial\Omega\right)\geq \frac{\epsilon}{5} \quad\text{and}\quad R\leq\frac{\epsilon}{20}.\]
 	Then there exist constants $c_5,c_6\in (0,\infty)$ that depend only on $q $ and $\epsilon$ such that for any boundary condition $\xi$, when $\delta$ is small enough, we have
 	\begin{equation} \label{eqn::six_arm}
 		\mu^{\xi}_{p_c(q),q,\Omega^{\delta}} \left[\mathcal{A}_6(z;r,R)\right]\leq c_5 \left(\frac{r}{R}\right)^{2+c_6}. 
 	\end{equation}
 \end{lemma}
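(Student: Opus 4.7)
\emph{Plan.} My strategy is to upgrade the universal 5-arm estimate (Lemma~\ref{lem::5arm}) by showing that the extra primal arm in the 6-arm pattern costs a multiplicative factor uniformly bounded away from $1$ at each dyadic scale, and then iterate across $\asymp\log(R/r)$ disjoint concentric sub-annuli.

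First I record the inclusion $\mathcal{A}_6(z;r,R)\subseteq \mathcal{A}_5(z;r,R)$: in the 6-arm pattern $1/0/1/1/0/1$ (counterclockwise), discarding one of the two consecutive primal arms (say the fourth) leaves five pairwise disjoint arms in pattern $1/0/1/0/1$, which is a valid witness for $\mathcal{A}_5$. Combined with Lemma~\ref{lem::5arm} this already gives $\mu^{\xi}_{p_c(q),q,\Omega^{\delta}}[\mathcal{A}_6(z;r,R)]\leq c_4(r/R)^2$, so the base exponent $2$ is in hand; the point is to gain a strictly positive excess $c_6$.

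Next set up dyadic scales $r_k=8^{k}r$ for $k=0,\ldots,K$ with $K=\lfloor\log_8(R/r)\rfloor-1$, giving disjoint concentric annuli $B_k:=A(z;r_k,r_{k+1})\subset A(z;r,R)$, each of bounded discrete extremal distance. Since $\mathcal{A}_6(z;r,R)$ implies $\mathcal{A}_6(z;r_k,r_{k+1})$ for every $k$, it suffices to produce a single-scale penalty
\[
\mu^{\xi}_{p_c(q),q,\Omega^{\delta}}\bigl[\mathcal{A}_6(z;r_k,r_{k+1})\bigr]\leq \alpha\,\mu^{\xi}_{p_c(q),q,\Omega^{\delta}}\bigl[\mathcal{A}_5(z;r_k,r_{k+1})\bigr]
\]
for some $\alpha=\alpha(q)\in(0,1)$, uniform in $\xi$, $k$ and $\delta$. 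The reason is that the two consecutive primal arms needed to upgrade the 5-arm pattern $1/0/1/0/1$ to the 6-arm pattern $1/0/1/1/0/1$ are constrained to lie in one prescribed sector delimited by the other four arms; that sector has bounded discrete extremal distance, so by Lemma~\ref{lem::RSW} and FKG a dual crossing of it (joining the two dual arms bounding that sector) occurs with probability bounded below, and such a dual crossing is incompatible with any extra primal arm in that sector.

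The last step is iteration. Using the quasi-multiplicativity of the universal 5-arm event together with the spatial mixing estimates for critical random-cluster models with $q\in[1,4)$ from~\cite{DCMTRCMFractalProperties} (the same ingredients behind their Corollary~6.7), one chains the per-scale penalty across the $K$ disjoint annuli $B_k$ to obtain
\[
\mu^{\xi}_{p_c(q),q,\Omega^{\delta}}\bigl[\mathcal{A}_6(z;r,R)\bigr]\leq C\alpha^{K}\mu^{\xi}_{p_c(q),q,\Omega^{\delta}}\bigl[\mathcal{A}_5(z;r,R)\bigr]\leq C'(r/R)^{2+c_6},\qquad c_6=\log_{8}(1/\alpha)>0,
\]
where the last inequality applies Lemma~\ref{lem::5arm}. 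The main obstacle is this chaining: the per-scale bound is conditional in nature, so turning it into a genuine product over scales requires the separation-of-arms / quasi-multiplicativity machinery (uniformly separated arm landing points at the annulus boundaries, combined with FKG domination of boundary conditions), which is exactly what~\cite{DCMTRCMFractalProperties} supplies for critical random-cluster models with $q\in[1,4)$.
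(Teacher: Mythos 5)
Your strategy is genuinely different from the paper's. The paper separates arms \emph{once}, at the two scales $r$ and $R$: it passes from $\mathcal{A}_6$ to the separated event $\hat{\mathcal{A}}_6$ (arms in prescribed angular sectors at both ends of $A(z;r,R)$), proves $\mu[\hat{\mathcal{A}}_6]\geq c_*\,\mu[\mathcal{A}_6]$ via \cite[Proposition~6.5]{DCMTRCMFractalProperties} and mixing, explores the five arms of $\hat{\mathcal{A}}_5$ to reveal a random simply connected residual domain $\hat{\Omega}$ between two explored primal arms, and then bounds the conditional probability $\mu[\hat{\mathcal{A}}_6\mid\hat{\mathcal{A}}_5]\leq\hat{c}(r/R)^{c_6}$: conditionally the sixth arm is a single dual crossing of $\hat{\Omega}$, and an RSW blocking argument (iterated only for that one residual crossing, which carries no constant compounding thanks to the domain Markov property) gives the polynomial decay; the universal five-arm bound then finishes. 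You instead try to obtain a per-scale penalty $\mu[\mathcal{A}_6(z;r_k,r_{k+1})]\leq\alpha\,\mu[\mathcal{A}_5(z;r_k,r_{k+1})]$ and chain it across $K\asymp\log_8(R/r)$ dyadic annuli.

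The gap lies in the chaining you flag as the ``main obstacle''. Even granting a conditional per-scale penalty uniform in boundary conditions, a careful peeling gives at best $\mu[\mathcal{A}_6(z;r,R)]\leq\mu\bigl[\bigcap_k\mathcal{A}_6(B_k)\bigr]\leq\alpha^{K}\,\mu\bigl[\bigcap_k\mathcal{A}_5(B_k)\bigr]$, and you must then compare $\mu\bigl[\bigcap_k\mathcal{A}_5(B_k)\bigr]$ to $(r/R)^2$. The naive multiplicative bound $\mu[\mathcal{A}_5(B_k)]\leq c_4\,8^{-2}$ per scale gives $(\alpha c_4)^{K}(r/R)^2$, which is useless if $\alpha c_4\geq 1$; and iterating a two-scale quasi-multiplicativity $\mu[\mathcal{A}_5(r,R)]\asymp\mu[\mathcal{A}_5(r,\rho)]\,\mu[\mathcal{A}_5(\rho,R)]$ across $K$ scales produces a compounding factor $C^{K}$ competing with $\alpha^{K}$, which you have not ruled out. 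The machinery you cite can be made to give a uniform-in-$K$ gluing, but once you unwind it you are doing a separated-arm exploration at every dyadic boundary, which is strictly more work than the paper's single separation. There is also a second, related issue: your per-scale penalty refers to ``one prescribed sector delimited by the other four arms'', but on the raw event $\mathcal{A}_5$ the arms have random landing positions, so that sector is random and may have arbitrary extremal distance, and Lemma~\ref{lem::RSW} does not then apply uniformly. To make RSW usable you must first pass to a separated variant $\hat{\mathcal{A}}_5$ with arms in prescribed sectors, exactly as the paper does. In short, your outline has the right core ideas ($\mathcal{A}_6\subseteq\mathcal{A}_5$, gain a penalty by blocking the extra primal arm with a dual crossing supplied by RSW), but both the sector prescription and the iteration need the separated-arm construction that the paper supplies up front, and as written the chaining step does not control the accumulating constants.
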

For completeness, we provide a proof below. To this end, we introduce two auxiliary events $\hat{\mathcal{A}}_6(z;r,R)$ and $\hat{\mathcal{A}}_5(z;r,R)$, as follows. Suppose that $\delta$ is small enough. We define $\hat{\mathcal{A}}_6(z;r,R)\subseteq \mathcal{A}_6(z;r,R)$ to be the following event: for $j=1,3,5$ (resp, $j=2,4,6$), there exists a path of open edges (resp., open dual edges) in 
 	\[\hat{A}:=A(z;r,R)\cup \Big\{z'=r'e^{\ii\theta}: r'\in [r-2\delta,r+2\delta]\cup [R-2\delta,R+2\delta],\, \theta\in [j\pi/3-\pi/9, j\pi/3+\pi/9]\Big\}\]
 	connecting $\big\{z'=re^{\ii\theta}: \theta\in [j\pi/3-\pi/9, j\pi/3+\pi/9]\big\}$ to $\big\{z'=Re^{\ii\theta}: \theta\in [j\pi/3-\pi/9, j\pi/3+\pi/9]\big\}$. We also define $\hat{\mathcal{A}}_5(z;r,R)\subseteq \mathcal{A}_5(z;r,R)$ to be the following event: (1) for $j=1,3,5$ (resp, $j=2,4$), there exists a path of open edges (resp., open dual edges) in $\hat{A}$
 	connecting $\big\{z'=re^{\ii\theta}: \theta\in [j\pi/3-\pi/9, j\pi/3+\pi/9]\big\}$ to $\big\{z'=Re^{\ii\theta}: \theta\in [j\pi/3-\pi/9, j\pi/3+\pi/9]\big\}$; (2) these $5$ paths are disjoint. As a consequence of~\cite[Proposition~6.5]{DCMTRCMFractalProperties} and the mixing property in~\cite[Eq. (2.4)]{DCMTRCMFractalProperties}, under the setup of Lemma~\ref{lem::six_arm},
  there exists a constant $c_* \in(0,\infty)$ that depends only on $q$ and $\epsilon$ such that 
 	\begin{align}\label{eqn::six_arm_proba_aux_1}
 		\mu^{\xi}_{p_c(q),q,\Omega^\delta}[\hat{\mathcal{A}}_6(z;r,R)]\geq c_* \mu^{\xi}_{p_c(q),q,\Omega^\delta}[\mathcal{A}_6(z;r,R)].
 	\end{align}
 	\begin{proof}[Proof of Lemma~\ref{lem::six_arm}]
 		Note that $\hat{\mathcal{A}}_6(z;r,R)\subseteq \hat{\mathcal{A}}_5(z;r,R)$. On the event $ \hat{\mathcal{A}}_5(z;r,R)$, we explore the states of edges in $\hat{A}\cap \delta\mathbb{Z}^2$, as follows. First, we explore the open clusters that are connected to $\{z'=re^{\ii\theta}:\theta\in [8\pi/9, 10\pi/9]\}$ and the dual open clusters that are connected to $\{z'=re^{\ii\theta}:\theta\in [5\pi/9, 7\pi/9]\cup [11\pi/9, 13\pi/9]\}$. 
For $j\in \{2,4\}$, fix an arbitrary path of dual open edges $\gamma_j$ connecting $\big\{z'=re^{\ii\theta}: \theta\in [j\pi/3-\pi/9, j\pi/3+\pi/9]\big\}$ to $\big\{z'=Re^{\ii\theta}: \theta\in [j\pi/3-\pi/9, j\pi/3+\pi/9]\big\}$. 
 		Second, we explore the path of open edges $\gamma_1$ connecting $\{z'=re^{\ii\theta}: \theta\in [2\pi/9, 4\pi/9]\}$ to $\{z'=Re^{\ii\theta}: \theta\in [2\pi/9, 4\pi/9]\}$ that is nearest to $\gamma_2$, and explore the path of open edges $\gamma_5$ connecting $\{z'=re^{\ii\theta}: \theta\in [14\pi/9, 16\pi/9]\}$ to $\{z'=Re^{\ii\theta}: \theta\in [14\pi/9, 16\pi/9]\}$ that is nearest to $\gamma_4$. 
 		
 		On the event $\hat{\mathcal{A}}_{5}(z;r,R)$, the two paths $\gamma_1$ and $\gamma_5$ are disjoint. We denote by $\hat{\Omega}$ the simply connected domain $\hat{\Omega}$ surrounded by $\gamma_1\cup \gamma_5\cup \partial A(z;r,R)$. On the event $\hat{\mathcal{A}}_6(z;r,R)$, there exists a path of dual open edges in $\hat{\Omega}$ connecting $\partial D(z;r)$ to $\partial D(z;R)$. It then follows from Lemma~\ref{lem::RSW} that there exist two constants $c_6,\hat{c}\in (0,\infty)$ that depends only on $q$ such that 
 		\begin{align} \label{eqn::six_arm_proba_aux_2}
 	\mu^{\xi}_{p_c(q),q,\Omega^{\delta}}[\hat{\mathcal{A}}_6(z;r,R)\vert \hat{\mathcal{A}}_5(z;r,R)]\leq \hat{c} \left(\frac{r}{R}\right)^{c_6}.
 		\end{align}
 		Combining~\eqref{eqn::six_arm_proba_aux_2} with~\eqref{eqn::six_arm_proba_aux_1} and Lemma~\ref{lem::5arm}, we have
 		\begin{align*}
 			\mu^{\xi}_{p_c(q),q,\Omega^{\delta}}[\mathcal{A}_6(z;r,R)]\leq \frac{1}{c_*} \mu^{\xi}_{p_c(q),q,\Omega^{\delta}}[\hat{\mathcal{A}}_6(z;r,R)]\leq \frac{\hat{c}c_4}{c_*} \left(\frac{r}{R}\right)^{2+c_6},
 		\end{align*}
 		as we set out to prove. 
 	\end{proof}

Now, we are ready to prove Lemma~\ref{lem::FK_mono_aux2}. 
\begin{proof}[Proof of Lemma~\ref{lem::FK_mono_aux2}]
	Thanks to RSW estimates in Lemma~\ref{lem::RSW}, it suffices to prove~\eqref{eqn::FK_mono_aux4} with $\mathbb{Q}^{\delta}$ replaced by $\mathbb{P}^{\delta}$. We find $z_1,\ldots, z_M\in \Omega_{\epsilon}$ such that $\Omega_{\epsilon}\subseteq \cup_{j=1}^M \ball(z_j;r/2)$. Note that for all $r\in (0,\epsilon/100)$, we can choose those $z_j$ such that $M\leq c_7/r^2$, where $c_7$ is a constant that depends only on $\Omega$. 
	
	We write
	\[E^{\delta}:=\vartheta^{(2)}(\Omega^{\delta};x_1^{\delta},x_2^{\delta},x_3^{\delta},x_4^{\delta})\cap \{\Dist\left(\eta_1^{\delta}\cap \Omega_{\epsilon}, \eta_2^{\delta}\cap \Omega_{\epsilon}\right)< r\}.\]
	Note that on the event $E^{\delta}$, there exists a vertex $x^{\delta}\in \Omega^{\delta}\cap \Omega_{\epsilon}$ such that $\mathcal{A}_6(x^{\delta};r/3,\epsilon/3)$ happens. Let $k\in \{1,2,\ldots, M\}$ be such that $x^{\delta}\in \ball(z_{k};r/2)$. Then the event $\mathcal{A}_6(z_{k};r,\epsilon/20)$ happens. From~\eqref{eqn::six_arm}, we have
	\begin{align*}
		\mathbb{P}^{\delta}\left[ E^{\delta}\right] \leq \mathbb{P}^{\delta}\left[\cup_{j=1}^M \mathcal{A}_6(z_j;r,\epsilon/5)\right]
		\leq \sum_{j=1}^M  \mathbb{P}^{\delta}\left[\mathcal{A}_6(z_j;r,\epsilon/5)\right]
		\leq  20^{2+c_6}c_5c_7\frac{r^{c_6}}{\epsilon^{2+c_6}}. 
	\end{align*}
 We obtain~\eqref{eqn::FK_mono_aux4} with $\mathbb{Q}^{\delta}$ replaced by $\mathbb{P}^{\delta}$, as desired.
\end{proof}

\appendix

\section{Relation between cross-ratio and extremal distance}
\label{appendix::crossratio_extremaldistance}

\begin{lemma}\label{lem::Ls}
Fix a quad $(\Omega; x_1, x_2, x_3, x_4)$. We consider the following two conformally invariant quantities of the quad. 
\begin{itemize}
\item Cross-ratio. Let $\varphi$ be a conformal map from $\Omega$ onto $\HH$ such that $\varphi(x_1)<\varphi(x_2)<\varphi(x_3)<\varphi(x_4)$. We define the cross-ratio of the quad $(\Omega; x_1, x_2, x_3, x_4)$ by
\begin{equation*}
s=s(\Omega; x_1, x_2, x_3, x_4)=\frac{(\varphi(x_4)-\varphi(x_1))(\varphi(x_3)-\varphi(x_2))}{(\varphi(x_3)-\varphi(x_1))(\varphi(x_4)-\varphi(x_2))}. 
\end{equation*} 
\item Extremal distance. 
There exists a unique $L=L(\Omega; x_1, x_2, x_3, x_4)>0$ and a unique conformal map $\phi$ from $\Omega$ onto the rectangle $[0,1]\times[0,\ii L/\pi]$ which sends $(x_1, x_2, x_3, x_4)$ to the four corners $(\ii L/\pi, 0, 1, 1+\ii L/\pi)$. We call $L=L(\Omega; x_1, x_2, x_3, x_4)$ the extremal distance between $(x_2x_3)$ and $(x_4x_1)$ in $\Omega$. 
\end{itemize}
We write the cross-ratio $s=s(L)$ as a function in $L$.  
Then the following limit exists 
	\begin{equation}\label{eqn::asyLs}
		\lim_{L\to \infty} s(L)\times\exp(L)= C\in (0,\infty). 
\end{equation}
\end{lemma}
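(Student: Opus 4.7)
The plan is to reduce the problem to a one-parameter family of symmetric representatives and then to read off both $s$ and $L$ in terms of the complete elliptic integrals via the classical Schwarz--Christoffel formula.

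First, since both $s$ and $L$ are conformal invariants of the quad, I would fix the normalized representative
\[
\Omega=\HH,\qquad (x_1,x_2,x_3,x_4)=(-1/k,-1,1,1/k),\qquad k\in(0,1).
\]
Plugging into the definition of the cross-ratio immediately gives
\[
s=s(k)=\frac{4k}{(1+k)^{2}},
\]
so $s(k)\sim 4k$ as $k\to 0^{+}$. The lemma thus reduces to showing that the corresponding extremal distance $L=L(k)$ satisfies $k\sim c\,e^{-L}$ for an explicit positive constant $c$.

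Second, I would invoke the classical Schwarz--Christoffel map
\[
\phi(z)=\int_{0}^{z}\frac{dw}{\sqrt{(1-w^{2})(1-k^{2}w^{2})}},
\]
which sends $\HH$ conformally onto the rectangle with vertices $(\pm K(k),\pm K(k)+\ii K'(k))$, with $\pm 1\mapsto \pm K(k)$ and $\pm 1/k\mapsto \pm K(k)+\ii K'(k)$; here $K(k)$ is the complete elliptic integral of the first kind and $K'(k)=K(\sqrt{1-k^{2}})$ is its complementary counterpart. Translating by $K(k)$ and rescaling by $1/(2K(k))$ to bring the image into the normalization $[0,1]\times[0,\ii L/\pi]$ of the statement, and checking that the four marked points land on the prescribed corners $(\ii L/\pi,0,1,1+\ii L/\pi)$, I obtain the identification
\[
L(k)=\frac{\pi\,K'(k)}{2K(k)}.
\]

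Third, I would insert the well-known asymptotics $K(k)\to \pi/2$ and $K'(k)=\log(4/k)+o(1)$ as $k\to 0^{+}$ (see, e.g., \cite{AbramowitzHandbook}) to get $L(k)=\log(4/k)+o(1)$, equivalently $e^{L}\sim 4/k$. Combining this with $s(k)\sim 4k$ yields
\[
\lim_{L\to\infty}s(L)\,e^{L}
=\lim_{k\to 0^{+}}\frac{4k}{(1+k)^{2}}\cdot\frac{4}{k}\bigl(1+o(1)\bigr)
=16,
\]
which proves~\eqref{eqn::asyLs} with the explicit constant $C=16$.

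The only real obstacle is bookkeeping: keeping track of the factors of $2$ and $\pi$ in the passage between the ``width $2K(k)$, height $K'(k)$'' normalization of the elliptic-integral rectangle and the ``width $1$, height $L/\pi$'' normalization used in the lemma. I would resolve this by explicitly matching the images of $(x_1,x_2,x_3,x_4)$ with the target corners after the affine rescaling, which pins down the factor $\pi/(2K(k))$ above unambiguously.
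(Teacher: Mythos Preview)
Your argument is correct and takes a genuinely different route from the paper's own proof. The paper normalizes to $(\HH;\infty,0,s,1)$, writes the Schwarz--Christoffel map directly in terms of $s$, and then carries out a bare-hands asymptotic analysis of the ratio
\[
\frac{L}{\pi}=\frac{\int_s^1 (u(u-s)(1-u))^{-1/2}\,\ud u}{\int_0^s (u(s-u)(1-u))^{-1/2}\,\ud u}
\]
as $s\to 0$: the denominator is shown to equal $\pi+O(s)$, and the numerator is split at $1/2$ and shown to behave like $-\log s + c$ via several dominated-convergence steps. This yields the existence of the limit with an expression for $C$ as an exponential of a sum of improper integrals, but no closed-form value.

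Your choice of the symmetric normalization $(-1/k,-1,1,1/k)$ is what makes the difference: it identifies the Schwarz--Christoffel integrals with the classical complete elliptic integrals $K(k)$ and $K'(k)$, so that $L=\pi K'(k)/(2K(k))$ and the asymptotics $K(k)\to\pi/2$, $K'(k)=\log(4/k)+o(1)$ can be quoted from \cite{AbramowitzHandbook} rather than derived by hand. This is shorter, and it produces the explicit constant $C=16$, which the paper does not obtain. The paper's approach, in exchange, is entirely self-contained and does not appeal to properties of named special functions. Your bookkeeping of the affine rescaling and the matching of corners is correct as written.
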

\begin{proof}
From conformal invariance, we may assume $(\Omega; x_1, x_2, x_3, x_4)=(\HH; \infty, 0, s, 1)$. Schwarz-Christoffel formula gives
\begin{equation*}
\phi(z)=\frac{\int_0^z (u(u-s)(u-1))^{-1/2}\ud u}{\int_0^s (u(u-s)(u-1))^{-1/2}\ud u},\quad z\in\HH. 
\end{equation*}
In particular, $\phi(1)=1+\ii L/\pi$ gives 
\begin{equation}\label{eqn::Ls}
\frac{L}{\pi}=\frac{\int_s^1 (u(u-s)(1-u))^{-1/2}\ud u}{\int_0^s (u(s-u)(1-u))^{-1/2}\ud u}. 
\end{equation}
Note that $s\to 0$ as $L\to\infty$ and our goal is to derive the convergence~\eqref{eqn::asyLs}. 

For the denominator, we have
\begin{align*}
\int_0^s (u(s-u)(1-u))^{-1/2}\ud u=&\int_0^1 (t(1-t)(1-st))^{-1/2} \ud t \tag{set $t=u/s$}\\
=&\int_0^1 \left((t(1-t)(1-st))^{-1/2} -(t(1-t))^{-1/2}\right)\ud t+\int_0^1 (t(1-t))^{-1/2}\ud t  \notag\\
=&\int_0^1 \frac{st}{(t(1-t)(1-st))^{1/2}(1+(1-st)^{1/2})} \ud t+\pi. 
\end{align*}
Thus, 
\begin{align}
\int_0^s (u(s-u)(1-u))^{-1/2}\ud u=&O(s)+\pi. \label{eqn::Ls:aux1}
\end{align}

For the numerator, we write
\begin{equation*}\label{eqn::Ls:aux2}
	\int_s^1 (u(u-s)(1-u))^{-1/2}\ud u= \underbrace{
		\int_{1/2}^1 (u(u-s)(1-u))^{-1/2}\ud u}_{Z_1(s)}+\underbrace{\int_s^{1/2} (u(u-s)(1-u))^{-1/2}\ud u}_{Z_2(s)}.
\end{equation*}
We treat the terms $Z_1(s)$ and $Z_2(s)$ one by one.

\begin{itemize}
		\item For $Z_1(s)$, by dominated convergence theorem, we have 
	\begin{equation} \label{eqn::Ls:aux4}
		\lim_{s\to 0} Z_1(s)= \int_{1/2}^1 u^{-1}(1-u)^{-1/2}\ud u.
	\end{equation}
	\item For $Z_2(s)$, we write
	\begin{align} 
		Z_2(s)+\log s =&\int^{1/2}_s (u(u-s)(1-u))^{-1/2}\ud u-\int_s^{1/2}u^{-1} \ud u-\log 2\notag\\
		 =&\underbrace{\int_s^{1/2} u^{-1/2}\left((u-s)^{-1/2}-u^{-1/2}\right)\ud u}_{Z_3(s)}+\underbrace{\int_s^{1/2} (u(u-s))^{-1/2}((1-u)^{-1/2}-1)\ud u}_{Z_4(s)}-\log 2.\label{eqn::l_s:aux31}
	\end{align}
	For $Z_3(s)$, by change of variables $t=u/s$, we have 
	\begin{equation}\label{eqn::l_s:aux33}
		Z_3(s)=\int^{1/(2s)}_1 t^{-1/2}\left((t-1)^{-1/2}-t^{-1/2}\right) \ud t\to  \int_1^{\infty}t^{-1/2}\left((t-1)^{-1/2}-t^{-1/2}\right)  \ud t,\quad \textit{as }s\to 0. 
	\end{equation}
For $Z_4(s)$, by change of variables $t=u-s$, we have 
\begin{equation}\label{eqn::l_s:aux311}
	Z_4(s)=\int_0^{1/2} ((t+s)t)^{-1/2} \left((1-t-s)^{-1/2}-1\right) \mathbb{1}_{\{t\leq 1/2-s\}}\ud t.
\end{equation}
For the integrand in~\eqref{eqn::l_s:aux311}, by mean value theorem, we have 
\begin{equation*} 
((t+s)t)^{-1/2} \left((1-t-s)^{-1/2}-1\right) \mathbb{1}_{\{t\leq 1/2-s\}} \leq 2^{-5/2} (t+s)^{1/2}t^{-1/2} \mathbb{1}_{\{t\leq 1/2-s\}} \leq 2^{-3/2} t^{-1/2},\quad \forall t\in (0,1/2).
\end{equation*}
Thus, for $Z_4(s)$, by dominated convergence theorem, we have 
\begin{equation}\label{eqn::l_s:aux32}
	Z_4(s)\to  \int_0^{1/2}t^{-1} \left((1-t)^{-1/2}-1\right)\ud t,\quad \textit{as }s\to 0.
\end{equation}

Plugging~\eqref{eqn::l_s:aux33} and~\eqref{eqn::l_s:aux32} into~\eqref{eqn::l_s:aux31}, we obtain
\begin{equation} \label{eqn::Ls:aux3}
\lim_{s\to 0} \left(Z_2(s)+\log s\right)=c\in \mathbb{R}, 
\end{equation}
where 
\begin{equation}\label{eqn::l_s:aux34}
	c:=\int_0^{1/2}t^{-1} \left((1-t)^{-1/2}-1\right)\ud t+\int_1^{\infty}t^{-1/2}\left((t-1)^{-1/2}-t^{-1/2}\right)  \ud t-\log 2.
\end{equation}

\end{itemize}

Plugging~\eqref{eqn::Ls:aux1},~\eqref{eqn::Ls:aux4} and~\eqref{eqn::Ls:aux3} into~\eqref{eqn::Ls},  we have 
\begin{align*}
	\frac{L+\log s}{\pi}
	&=\frac{\int_s^1 (u(u-s)(1-u))^{-1/2}\ud u}{\int_0^s (u(s-u)(1-u))^{-1/2}\ud u}+\frac{\log s}{\pi}\\
	&=\frac{\int_s^1 (u(u-s)(1-u))^{-1/2}\ud u+\log s}{\int_0^s (u(s-u)(1-u))^{-1/2}\ud u}+\frac{O(s)\log s }{\int_0^s (u(s-u)(1-u))^{-1/2}\ud u}\\
	&\to \frac{c+\int_{1/2}^1 u^{-1}(1-u)^{-1/2}\ud u}{\pi},\quad \textit{as }s\to 0.
\end{align*}
where $c$ is given by~\eqref{eqn::l_s:aux34}.
Thus, we obtain~\eqref{eqn::asyLs} with
\begin{equation*}
	C=\exp\left(c+\int_{1/2}^1 u^{-1}(1-u)^{-1/2}\ud u\right).
	\end{equation*}
%For the numerator, we have
%\begin{align*}
%\int_s^1 (u(u-s)(1-u))^{-1/2}\ud u=&\int_s^1 \left((u(u-s)(1-u))^{-1/2}-u^{-1}\right)\ud u+\int_s^1 u^{-1}\ud u\\
%=&\int_s^1 \left((u(u-s)(1-u))^{-1/2}-u^{-1}\right)\ud u-\log s\\
%=&\int_0^{1}\frac{\ud u}{\sqrt{u(1-u)}}\underbrace{\left(\frac{1}{\sqrt{u-s}}-\frac{\sqrt{1-u}}{\sqrt{u}}\right)\one_{\{u\in (s,1)\}}}_{Z(s)}-\log s.
%\end{align*}
%From dominated convergence theorem, we have
%\begin{align}
%\lim_{s\to 0}\left(\int_s^1 (u(u-s)(1-u))^{-1/2}\ud u+\log s\right)=
%\end{align}
\end{proof}

%{\small 
%\bibliographystyle{alpha}
%\bibliography{bibliography}}

{\small \bibliographystyle{alpha}
\bibliography{bibliography}}

\end{document}